\theoremstyle{plain}
\newtheorem{theorem}{Theorem}[subsection]
\newtheorem{corollary}[theorem]{Corollary}
\newtheorem{lemma}[theorem]{Lemma}
\newtheorem{proposition}[theorem]{Proposition}
\newtheorem{fact}[theorem]{Fact}
\newtheorem{claim}{Claim}
\newtheorem*{claim*}{Claim}
\newtheorem*{propaux}{Proposition \theoremauxnum}
\gdef\theoremauxnum{1}
\newtheorem*{theoremaux}{Theorem \theoremauxnum}
\gdef\theoremauxnum{1}
\newenvironment{theoremff}[2][]{%
  \def\theoremauxnum{\ref{#2}}
  \begin{theoremaux}[#1]
}{%
  \end{theoremaux}
}
\newtheorem*{theorem*}{Theorem}
\newtheorem*{proposition*}{Proposition}
\newtheorem*{corollary*}{Corollary}
\theoremstyle{definition}
\newtheorem{definition}[theorem]{Definition}
\newtheorem*{definition*}{Definition}
\newtheorem{example}[theorem]{Example}
\newtheorem*{defaux}{Definition \theoremauxnum}
\gdef\theoremauxnum{1}
\newenvironment{defff}[2][]{%
  \def\theoremauxnum{\ref{#2}}
  \begin{defaux}[#1]
}{%
  \end{defaux}
}
\theoremstyle{remark}
\newtheorem*{remark}{Remark}
\newtheorem{remarkcnt}[theorem]{Remark}
\newtheorem{question}[theorem]{\textbf{Question}}
\numberwithin{equation}{section}
\newcommand{\forkindep}[1][]{%
  \mathrel{
    \mathop{
      \vcenter{
        \hbox{\oalign{\noalign{\kern-.3ex}\hfil$\vert$\hfil\cr
              \noalign{\kern-.7ex}
              $\smile$\cr\noalign{\kern-.3ex}}}
      }
    }\displaylimits_{#1}
  }
}
\DeclareRobustCommand{\claimqed}{%
  \ifmmode \mathqed
  \else
    \leavevmode\unskip\penalty9999 \hbox{}\nobreak\hfill
    \quad\hbox{\qedsymbol\text{ (claim)}}%
  \fi
}
\newenvironment{claimproof}[1][\proofname]{\par
  \pushQED{\claimqed}%
  \normalfont \topsep6\p@\@plus6\p@\relax
  \trivlist
  \item[\hskip\labelsep
    #1\@addpunct{.}]\ignorespaces
}{%
  \popQED\endtrivlist\@endpefalse
}
\newcounter{step}                   
    {\hfill $\clubsuit$             
     \vspace{7pt}\par}
\newcommand{\aff}{\text{aff}}
\newcommand{\GVar}[1][F]{(\text{SPVar}/{#1})}
\newcommand{\aRdSch}[1][F]{(\text{Rd-Sch}^{\aff}/\mathcal{O}_{#1})}
\newcommand{\aGVar}[1][F]{(\text{SPVar}^{\aff}/{#1})}
\newcommand{\aSch}[1][F]{(\text{Sch}^{\aff}/\mathcal{O}_{#1})}
\DeclareMathOperator{\Spec}{Spec}
\DeclareMathOperator{\val}{val}
\DeclareMathOperator{\res}{res}
\DeclareMathOperator{\RM}{RM}
\DeclareMathOperator{\trdeg}{tr.deg}
\DeclareMathOperator{\acl}{acl}
\DeclareMathOperator{\dcl}{dcl}
\renewenvironment{proof}[1][\proofname]{\par
  \pushQED{\qed}%
  \normalfont \topsep6\p@\@plus6\p@\relax
  \trivlist
  \item[\hskip\labelsep
    #1\@addpunct{.}]\ignorespaces
  \setcounter{claim}{0}  
}{%
  \popQED\endtrivlist\@endpefalse
}
\begin{document}
\title{On Stably Pointed Varieties and Generically Stable Groups in ACVF}
\author{Yatir Halevi}
\address{The Fields Institute for Research in Mathematical Sciences, Toronto, Canada}
\email{ybenarih@fields.utoronto.ca}
\date{}
\keywords{stably pointed varieties, generically stable, groups, ACVF,  maximum modulus principle, stably dominated}
\subjclass[2010]{03C65, 03C98, 14L15, 12J25}
\thanks{The research leading to these results has received funding from the European Research Council under the European Union’s Seventh Framework Programme (FP7/2007-2013)/ERC Grant Agreement No. 291111.}

\begin{abstract}
We give a geometric description of the pair $(V,p)$, where $V$ is an affine algebraic variety over a non-trivially valued algebraically closed field $K$ with valuation ring $\mathcal{O}_K$ and $p$ is a Zariski dense generically stable type concentrated on $V$, by defining a fully faithful functor to the category of schemes over $\mathcal{O}_K$ with residual dominant morphisms over $\mathcal{O}_K$. 

We also study a maximum modulus principle on schemes over $\mathcal{O}_K$ and show that the schemes obtained by this functor enjoy it.
\end{abstract}

\maketitle

\section{Introduction}
The theory of non-trivially valued algebraically closed fields was one of the first to be studied by model theorists, going back to A. Robinson who showed it is model complete \cite{Robinson}. The theory is not stable, but over the past two decades it has been studied extensively and new methods were developed to study it and other similar theories. They fall into a wider class of theories called metastable in which there is a stable part which allows the study of certain types using stable theoretic tools (see \cite{StableDomination} and \cite{Metastable}).

The geometry in models of the theory ACF of algebraically closed fields is well understood. Each complete type concentrates on a unique irreducible algebraic variety and definable groups are definably isomorphic to algebraic groups. We still do not have a complete corresponding picture for ACVF (the theory of non-trivially valued algebraically closed fields).

In \cite[Theorem 6.11]{Metastable}, Hrushovski and Rideau-Kikuchi show that given a pair $(G,p)$, where $G$ is an affine algebraic group and $p$ is a generically stable generic type of a definable subgroup $H$, $H$ is definably isomorphic to the $\mathcal{O}$-valued points of some group scheme defined over the valuation ring $\mathcal{O}$. In this paper we expand the ideas from \cite{Metastable} to give a geometric interpretation of the category of (affine) varieties with generically stable Zariski dense types concentrated on them. We expand and elaborate:

Let $(K,\val )$ be an algebraically closed field with a non-trivial valuation. Denote by $\mathcal{O}_K=\{x\in K: \val (x)\geq 0\}$ its valuation ring, by $\Gamma_K$ its value group and by $k_K$ its residue field. Most of the following results hold for more general valued fields and we do so in the text, but for ease of presentation we state everything over $K$.

In Section \ref{s:prelimin} we give some preliminaries. We give the definition and basic properties of ACVF and generically stable types. 

In Section \ref{S:some facts} we review some basic result on schemes over valuation rings and use model theoretic tools to prove:
\begin{theoremff}{T:Surjective}
Let $\mathcal{V}$ be an irreducible scheme of finite type over $\mathcal{O}_K$. If $\mathcal{V}$ has an $\mathcal{O}_K$-point then \[\mathcal{V}(\mathcal{O}_K)\to \mathcal{V}_{k_K}(k_K)\] is surjective.
\end{theoremff}

After choosing an open affine covering, an algebraic variety over $K$ can be seen as a definable set over $K$ (see Section \ref{ss:alg.grps}). In Section \ref{S:gen stable var} we study the category $\GVar[K]$ of pairs $(V,p)$, where $V$ is an algebraic variety over $K$ and $p$ a Zariski dense generically stable $K$-definable type concentrated on it, morphisms are morphisms of varieties over $K$ that pushforward the generically stable type accordingly (Definition \ref{D:the-category_OverF}). We will call this category, the category of \emph{stably pointed varieties} (a variety together with a distinguished generically stable type). An example is $(\mathbb{A}^1_K,p_{\mathcal{O}_K})$, where $p_{\mathcal{O}_K}$ is the generic type of the closed ball $\mathcal{O}_K$.

For affine varieties $V$, we give a geometric interpretation of the pair $(V,p)$ by means of a functor $\Phi^{\aff}_K$ from $\aGVar[K]$ (the category restricted to affine varieties) to the category $\aRdSch[K]$ of affine schemes over $\mathcal{O}_K$ with residual dominant morphisms, i.e. morphisms $f:\mathcal{V}\to \mathcal{W}$ such that \[f_{k_K}:\mathcal{V}_{k_K}\to \mathcal{W}_{k_K}\] is dominant. The functor $\Phi^{\aff}_K$ is fully faithful (Proposition \ref{C:fully-faithful}), commutes with products (Proposition \ref{P:prod-two-affines}) and the objects in its image enjoy a maximum modulus principle (Propositions \ref{P:properties of Phi(V,p)}).

In Section \ref{s:gen stable grps} we consider the case of generically stable groups.

\begin{defff}{D:gen-stab-grp}
A \emph{generically stable group} is a definable group $G$ with a generically stable type $p$ concentrated on $G$ such that for any $A=\acl(A)$ over which $p$ is defined and $g\in G$, the image of $p$ under multiplication by $g$ from the left, $gp$, is definable over $A$. $G$ will be called \emph{connected} if $gp=p$ for all $g\in G$. 
\end{defff}

The typical example is $\mathrm{GL}_n(\mathcal{O}_K)$ (the invertible matrices over $\mathcal{O}_K$) as a subgroup of $\mathrm{GL}_n(K)$, and in fact we show that for any irreducible group scheme $G$ of finite type over $\mathcal{O}_K$, $G(\mathcal{O}_K)$ is generically stable (Proposition \ref{P:grp scheme over O is gen stable}). 
An irreducible algebraic group with a generically stable generic is always an Abelian variety (Proposition \ref{P:No_Ga_Gm=Abelian}).

Hrushovski and Rideau-Kikuchi give in \cite[Proposition 6.9]{Metastable} a characterization of a connected generically stable subgroup $H$ of an affine algebraic group $G$ in terms of a maximum modulus principle. The principle basically says that there exists a $K$-definable type $p$ concentrated on $H$ such that for every regular function $f$ on $G$ and $K\prec L$ a model over which $L$ is defined, there is some $\gamma_f\in\Gamma_L$ such that if $c\models p|L$ then $\val(f(c))=\gamma_f$, and for every $h\in H$, \[\val (f(h))\geq \val (f(c)).\]
In Section \ref{ss:the_functor} we generalize this notion to the non-affine case and prove the following

\begin{theoremff}{T:unique-generic-only-alg-group}
Let $\mathcal{G}$ be a separated integral group scheme of finite type over $\mathcal{O}_K$ and $p$ a $K$-definable type concentrated on $\mathcal{G}(\mathcal{O})$. The following are equivalent:
\begin{enumerate}
\item $\mathcal{G}$ has the maximum modulus principle with respect to $p$;
\item $\mathcal{G}(\mathcal{O})$ is generically stable with $p$ as its unique generically stable generic type.
\end{enumerate}
\end{theoremff}

\section{Preliminaries}\label{s:prelimin}
We will usually not distinguish between singletons and sequences thus we may write $a\in M$ and actually mean $a=(a_1,\dots,a_n)\in M^n$, unless a distinction is necessary. We use juxtaposition $ab$ for concatenation of sequences, or $AB$ for $A\cup B$ if dealing with sets. That being said, since we will be dealing with groups, we will try to differentiate between concatenation $ab$ and group multiplication $ab$ by denoting the latter by $a\cdot b$. Greek letters $\alpha,\beta, \gamma,..$ will range over the value group. Lower-case letters $a, b, c$ range over the field. 

For a field $F$, by a variety over $F$ we mean a geometrically integral separated scheme of finite type over $F$.
We will assume some basic knowledge of schemes and group schemes. They are introduced in order to be able to talk about "varieties over rings". All of this can be found in your favourite Algebraic Geometry Book, for instance \cite{gortz}.

For a definable type $p(x)$, we denote by $(d_px)\varphi(x,y)$ the $\varphi$-definition of $p$.

Finally, unless stated otherwise, $K$ will usually denote a non-trivially valued algebraically closed field.

\subsection{ACVF and Generically Stable Types}
Let $(K,\val )$ be a non-trivially valued field with value group $\Gamma_K$. The valuation ring of $K$ is the ring $\mathcal{O}_K=\{x\in K:\val (x)\geq 0\}$. It is a local ring with maximal ideal $\mathcal{M}_K=\{x\in K:\val (x)>0\}$. The residue field is the quotient $k_K=\mathcal{O}_K/\mathcal{M}_K$ and the quotient map $\res :\mathcal{O}_K\to k_K$ is called the residue map. There are different natural languages for valued fields, for now assume there is a sort for $\Gamma$ and a sort for $k$ with the obvious maps from the valued field sort.

Let ACVF be a theory stating that
\begin{enumerate}
\item $K$ is an algebraically closed field,
\item the valuation axioms,
\item the valuation is non-trivial.
\end{enumerate}

The following is well known
\begin{fact}\cite[Proposition 2.1.3]{Eli_Imag}
\begin{enumerate}
\item $k$ is a stable and stably embedded pure algebraically closed field;
\item $\Gamma$ is a stably embedded pure divisible ordered abelian group.
\end{enumerate}
\end{fact}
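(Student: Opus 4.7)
The plan is to deduce both parts from quantifier elimination for ACVF in the three-sorted language with sorts $(K,\Gamma,k)$, the valuation $\val\colon K^\times\to\Gamma$ and the residue map $\res\colon\mathcal{O}_K\to k$. First I would invoke the classical quantifier elimination theorem (originally due to Robinson, and suitably refined in the many-sorted presentation) to reduce any formula to a Boolean combination of atomics in the three sorts. The crucial structural observation is that the sort-crossing function symbols go only \emph{from} $K$ \emph{into} $\Gamma$ or $k$, never back: no function takes a $\Gamma$- or $k$-input and returns a $K$-output. Consequently any term whose free variables lie in $k$ is an ordinary ring-theoretic term in $k$, and any term whose free variables lie in $\Gamma$ is a $\mathbb{Z}$-linear combination of those variables together with constants from $\Gamma$.

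For part (1), fix a $K$-definable subset $D\subseteq k^n$, say $D=\varphi(k^n;\bar a)$ with $\bar a\in K$. After quantifier elimination, the atomic subformulas of $\varphi$ whose free variables land in $k$ have the form $P(y_1,\dots,y_n)=0$ (or its negation) for polynomials $P$ whose coefficients are $\res$-terms in $\bar a$, hence lie in $k$. This simultaneously gives purity (each such formula lives in the pure ring language of $k$) and stable embeddedness (the parameters needed lie in $k$, namely the residues of the relevant subterms of $\bar a$). Stability of $k$ then follows from stability of ACF.

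For part (2), the same procedure applies to a $K$-definable subset $E\subseteq\Gamma^n$. The atomic formulas in $\Gamma$-variables reduce to $\sum n_i\gamma_i = \delta$ or $\sum n_i\gamma_i < \delta$ with $n_i\in\mathbb{Z}$ and $\delta\in\Gamma$ of the form $\val(b)$ for some polynomial $b$ in the field parameters. Thus $E$ is definable in the pure language of ordered abelian groups over parameters from $\Gamma=\val(K^\times)$, yielding both purity and stable embeddedness. Divisibility of $\Gamma$ is a separate elementary consequence of the algebraic closedness of $K$ ($n$-th roots exist), which upgrades the induced structure on $\Gamma$ to that of a DOAG.

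The main technical input is quantifier elimination itself; once it is in hand, the rest is essentially a syntactic inspection tracking which sort each subterm inhabits. The point I expect to require the most care is verifying that, for a formula whose free variables all live in a geometric sort, QE really does eliminate every quantifier ranging over the field sort, so that the resulting description is genuinely \emph{internal} to $k$ (respectively $\Gamma$) and does not smuggle in the ambient valued-field structure through some hidden field quantifier.
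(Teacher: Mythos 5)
Since the paper states this result as a citation to Haskell--Hrushovski--Macpherson (Proposition 2.1.3 of \cite{Eli_Imag}) and gives no proof of its own, there is nothing in the paper to compare against; but your QE-based syntactic argument is exactly the standard proof, and I believe it is the one used in the cited reference.

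Two points worth tightening. First, the language matters more than your phrasing suggests: the three-sorted language with a \emph{unary} $\res\colon\mathcal{O}_K\to k$ and $\val\colon K^\times\to\Gamma$ is \emph{not} one in which ACVF obviously eliminates quantifiers, because you cannot write the residue of a ratio $x/y$ as a quantifier-free term without a division symbol. The reference works with a binary residue function $\mathrm{Res}\colon K^2\to k$ (sending $(x,y)$ to $\res(x/y)$ when $\val(x)\geq\val(y)$ and $y\neq 0$, and $0$ otherwise), and it is in that enriched three-sorted language that QE holds. Once you have QE in such a language, your structural observation is precisely right: the only function symbols crossing sorts run from $K$ to $\Gamma$ or $k$, never in the opposite direction and never between $\Gamma$ and $k$, so an atomic subformula containing a free variable of sort $k$ (resp.\ $\Gamma$) can involve nothing but $k$-terms (resp.\ $\Gamma$-terms), with the $K$-parameters contributing only through $\res$-images (resp.\ $\val$-images), i.e.\ through parameters already inside the geometric sort. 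Second, you correctly note that only $k$ is claimed to be stable; $\Gamma$ is merely stably embedded and pure (as a DOAG), which is what you prove, and divisibility follows from algebraic closedness of $K$ as you say. So the proof is sound; just be explicit that QE is being invoked in the language with the two-variable residue map, which is exactly the delicate point you flagged at the end.
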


One may consider other languages for ACVF, such as $L_{\text{div}}$ and $L_{\Gamma}$. The language $L_{\text{div}}$ and $L_{\Gamma}$ have quantifier elimination and have the same interpretable sets. One may pass to ACVF$^{eq}$ in order to have elimination of imaginaries or add geometric sorts and get a language $L_\mathscr{G}$ admitting quantifier elimination and elimination of imaginaries (see \cite{Eli_Imag}).

We work in $L_\mathscr{G}$ in order to have elimination of imaginaries, but in practice we will ignore most of the sorts. We will mostly restrict ourselves to the following sorts: the valued field sort (which will also be called the home sort) which will be denoted by $VF$, the value group by $\Gamma$ and the residue field by $k$. 

Let $\mathcal{O}$ be the definable set defined by $\val (x)\geq 0$. 
For $M\models $ACVF, if $K=VF(M)$ then we will write $\mathcal{O}_K$ for its valuation ring and $k_K$ for its residue field. Mostly, we will treat the valued field sort and the model as interchangeable, e.g. when we say that $K$ is a model of ACVF we really mean that $K=VF(M)$ for some $M\models$ACVF. We also write
$\Gamma (A):= \dcl(A)\cap \Gamma$, and $k(A):= \dcl(A)\cap k$ and for a valued field $F$ we will write $\Gamma_F$ and $k_F$ for the value group and residue field of $F$, respectively. Notice that if $F$ is a valued field then $\Gamma(F)=\mathbb{Q}\otimes\Gamma_F$.

Let $\mathbb{U}\models$ ACVF be a monster model and $D$ a $C$-definable set. To simplify notations,  $D$ should be read as $D(\mathbb{U})$. For a $C$-definable set $D$ and $C\subseteq B$ we shall write $D(B):=D(\mathbb{U})\cap \dcl(B)$. Furthermore we denote $\mathbb{K}:=VF(\mathbb{U})$.

We continue with the definition of generically stable types.

\begin{definition}
Let $T$ be a NIP complete theory and $p$ an $A$-definable global type.
\begin{enumerate}
\item $p$ is \emph{generically stable} if \[p(x)\otimes p(y)=p(y)\otimes p(x),\]
where $p(x)\otimes p(y)=tp(a,b/\mathbb{U}))$ for $b\models p|\mathbb{U}$ and $a\models q|\mathbb{U}b$ and similarly for $p(y)\otimes p(x)$. It is also an $A$-definable type.
\item Let $Q$ be a $\emptyset$-definable set. The type $p$ is \emph{orthogonal to $Q$} if for every $A\subseteq B$ and $B$-definable function $f$ into $Q$, $f_*p$ is a constant type, where $f_*p=tp(f(a)/\mathbb{U})$ for $a\models p$.
\item Let $q$ be another $A$-definable type and $f$ an $A$-definable function. The type $p$ is dominated by $q$ via $f$ if all $A\subseteq B$, \[a\models p|B \Leftrightarrow a\models p|A \text{ and } f(a)\models q|B.\] We say $p$ is stably dominated if there exists a stable, stably embedded definable set $D$ such that $q$ is concentrated on $D$ and $p$ is dominated by $q$ via $f$.
\end{enumerate}
\end{definition}

In ACVF we have the following,
\begin{fact}\cite[Proposition 2.9.1]{Non-Arch-Tame}
Let $p$ be an $A$-definable type in ACVF, then the following are equivalent:
\begin{enumerate}
\item $p$ is stably dominated;
\item $p$ is generically stable;
\item $p$ is orthogonal to $\Gamma$.
\end{enumerate}
\end{fact}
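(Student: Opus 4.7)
The plan is to reduce both implications to the affine case of Hrushovski--Rideau \cite[Proposition 6.9]{Metastable} via a cover of $\mathcal{G}$ by affine opens $\{\mathcal{U}_i\}$ defined over $\mathcal{O}_K$, and to invoke the Fact that in ACVF generic stability is equivalent to orthogonality to $\Gamma$. Throughout I will use that $p$, being concentrated on $\mathcal{G}(\mathcal{O})$ and the generic of an irreducible group, concentrates on a single affine chart whenever convenient.

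For $(2) \Rightarrow (1)$: since $p$ is the unique generically stable generic and any translate $g \cdot p$ with $g \in \mathcal{G}(\mathcal{O})$ is again such a generic, we obtain translation-invariance $g \cdot p = p$. By the Fact, $p$ is orthogonal to $\Gamma$, so for every regular $f$ on an affine open $\mathcal{U}$ defined over a model $L$ and every $c \models p|L$ with $c \in \mathcal{U}(\mathcal{O})$, $\val(f(c))$ is a constant $\gamma_f \in \Gamma_L$. The inequality $\val(f(h)) \geq \gamma_f$ for $h \in \mathcal{U}(\mathcal{O})$ is then exactly the argument of \cite[Proposition 6.9]{Metastable}: consider the regular function $F(x,y) := f(x \cdot y)$ on an appropriate product of affine charts, apply orthogonality to $p$ in the second variable with $h$ fixed in the first, use translation-invariance $h \cdot p = p$, and compare the evaluations at $y = 1$ and at a generic $y \models p$.

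For $(1) \Rightarrow (2)$: by quantifier elimination in the geometric sorts, any definable function from an affine chart of $\mathcal{G}$ to $\Gamma$ is built from valuations of regular functions, all of which are constant on realizations of $p$ by MMP; hence $p$ is orthogonal to $\Gamma$ and so, by the Fact, generically stable. To get translation-invariance, I note that for $g \in \mathcal{G}(\mathcal{O})$ the translate $g \cdot p$ inherits an MMP with constants $\gamma_f(g \cdot p) = \gamma_{f \circ L_g}(p)$; combining the MMP inequality for $p$ applied to $f \circ L_g$ (evaluated at $1$) with its analogue for $L_{g^{-1}}$ forces $\gamma_f(g \cdot p) = \gamma_f(p)$ for every affine-local regular $f$, and then QE identifies $g \cdot p$ with $p$ on $\mathcal{G}(\mathcal{O})$. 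For uniqueness among generically stable generics, any such $q$ inherits translation-invariance by the same argument and hence satisfies its own MMP; the identity $\gamma_f(p) = \gamma_f(q) = \min_{h \in \mathcal{G}(\mathcal{O})} \val(f(h))$ (the minimum attained because the generic realization lies in $\mathcal{G}(\mathcal{O})$) shows $p$ and $q$ agree on valuations of all regulars, forcing $q = p$.

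The main obstacle is the non-affine nature of $\mathcal{G}$: in the affine setting regular functions are global, whereas here one must argue on overlapping affine charts and verify that the $\gamma_f$ behave coherently under restriction and translation. The subtler half is $(1) \Rightarrow (2)$, where one must extract translation-invariance and uniqueness of the generic type from what is purely valuation-of-regular-function data; this is where QE in the geometric sorts, together with orthogonality to $\Gamma$, does the heavy lifting of recovering the type from its regular-function valuations.
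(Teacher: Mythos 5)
Your proposal does not address the statement in question. The statement to be proved is the general equivalence, for an arbitrary $A$-definable type $p$ in ACVF, of (1) stable domination, (2) generic stability, and (3) orthogonality to $\Gamma$ --- a foundational result of Hrushovski--Loeser with no group structure, no group scheme $\mathcal{G}$, no affine charts over $\mathcal{O}_K$, and no maximum modulus principle anywhere in sight. What you have written instead is a sketch (roughly along the lines of the paper's own argument) for Theorem \ref{T:unique-generic-only-alg-group}: the equivalence, for an irreducible separated group scheme $\mathcal{G}$ of finite type over $\mathcal{O}_K$ and a $K$-definable type $p$ concentrated on $\mathcal{G}(\mathcal{O})$, of ``$\mathcal{G}$ has the mmp w.r.t.\ $p$'' and ``$\mathcal{G}(\mathcal{O})$ is generically stable with $p$ as its unique generically stable generic.'' That is a different theorem from a different section of the paper.

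The mismatch is not superficial. Your argument explicitly \emph{invokes} ``the Fact that in ACVF generic stability is equivalent to orthogonality to $\Gamma$'' as a black box in both directions --- i.e.\ you are assuming the very equivalence you were asked to establish --- and everything else in your sketch (translation-invariance $g\cdot p = p$, covering $\mathcal{G}$ by affine charts, comparing $\gamma_f$ across translates, the identity $\gamma_f(p) = \min_{h\in\mathcal{G}(\mathcal{O})}\val(f(h))$) lives in the group-scheme setting and has no counterpart in the statement to be proved. There is therefore no way to patch this into a proof of the stated Fact; you would need to start over. Note also that the paper itself does not prove this Fact but cites it from Hrushovski--Loeser, so a blind reconstruction would have to give (or locate) the actual argument relating stable domination, generic stability and $\Gamma$-orthogonality, e.g.\ via the structure of imaginaries, the stable part $\mathrm{St}_C$, and the characterization of the geometric sorts.
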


\begin{remark}
The following are well known consequences of the definitions and some basic properties of generically stable type (see, for example, \cite[Section 2.2.2]{guidetonip}).
\begin{enumerate}
\item Products (as definable types) of generically stable types are generically stable: if $p(x)$ and $q(y)$ are generically stable then so is $p(x)\otimes q(y)$.
\item Pushforwards of generically stable types are generically stable: if $p$ is an $A$-definable generically stable type and $f$ is an $A$-definable function on $p$ then $f_*p$ is also generically stable.
\item If $p$ is dominated via $f$ by $f_*p$ and $f_*p$ is generically stable then $p$ is also generically stable.
\end{enumerate}
\end{remark}

\begin{example}\label{E:p_O-minimal-val}
Let $p_\mathcal{O}$ be the global generic type of the closed ball $\mathcal{O}$ (in the sense of \cite[Definition 7.17]{StableDomination}), i.e  $p_\mathcal{O}(x)$ says that $x\in\mathcal{O}$  but $x$ is not in any proper sub-ball of $\mathcal{O}$. Let $p_k(x)$ be the global generic type of $k$. $p_\mathcal{O}$ is stably dominated by $p_k$ via $\res$ (the residue map). One also sees that for every polynomial $f\in K[\bar X]$ and $c\models p_\mathcal{O}^n|K$, $\val (f(c))=\min_i\{\val (b_i)\}$, where $\{b_1,\dots,b_m\}$ are the coefficients of $f$.
\end{example}

\subsection{Interpreting Varieties over $K$ and Schemes over $\mathcal{O}_K$}\label{ss:interpreting}
The following is quite standard and is added mainly to fix notation and conventions.

\subsubsection{Some Words on Notation}\label{ss:definable-scheme}
We shall recall in Section \ref{ss:alg.grps} that every variety $V$ over a field $F$ gives rise to an ACF-definable set: the set of its $\mathbb{K}$-points in the monster model, where $\mathbb{K}=\mathrm{VF}(\mathbb{U})$. We shall abuse notation and also write $V$ for the definable set it defines in the monster model. For any field extension $F\subseteq L$, $V_L$ is the base-change of $V$ to $L$ and $V_L(L)$ is the set of $L$-points of $V_L$.

In Section \ref{ss:interp. Grp.Sch} we shall see that every separated finitely presented scheme $\mathcal{V}$ over $\mathcal{O}_F$ for a valued field $F$ gives rise to an ACVF-definable set: the set of $\mathcal{O}$-points in the monster model. Again we will abuse notation and write $\mathcal{V}(\mathcal{O})$ for the definable set it defines in the monster model. It is a definable subset of $\mathcal{V}_F=\mathcal{V}\times_{\mathcal{O}_F}F$. For any $(L,\mathcal{O}_L)$ valued field extension of $(F,\mathcal{O}_F)$, $\mathcal{V}_{\mathcal{O}_L}$ is the base-change of $\mathcal{V}$ to $\mathcal{O}_L$ and $\mathcal{V}_{\mathcal{O}_L}(\mathcal{O}_L)$ is the set of $\mathcal{O}_L$-points of $\mathcal{V}_{\mathcal{O}_L}$.

Similarly, in Section \ref{ss:Pro-def-grps} we will see that every quasi-compact separated scheme over $\mathcal{O}_F$ gives rise to a pro-definable set, i.e. there pro-definable set of its $\mathcal{O}$-points.

\subsubsection{Algebraic Varieties}\label{ss:alg.grps}
In the following we work in ACF, most of the following can be found in \cite[Section 7.4]{Marker}. Let $K$ be an algebraically closed field.
Let $V$ be a variety over $K$. $V$ has a finite open covering  by schemes isomorphic to $\Spec R_i$, $R_i:=K[\bar{X}]/(f_{i,1},\dots ,f_{i,s})$ where $f_{i,j}\in K[\bar{X}]$. 
Thus \[V=\bigcup_{i=1}^n V_i,\]
where $V_i=\Spec R_i$ and $(\Spec R_i)(K)\subseteq K^{n_i}$ with homeomorphisms \[\varphi_i: V_i\to \Spec R_i\] such that 
\begin{enumerate}
\item $U_{i,j}=\varphi_i(V_i\cap V_j)$ is open subscheme  of $U_i$
\item $\varphi_{i,j}=\varphi_i\circ \varphi_j^{-1}: U_{j,i}\to U_{i,j}$ is an isomorphism of varieties.
\end{enumerate}

Let $m$ be such that $(\Spec R_i)(K)\subseteq K^m$ for all $i$. Let $u_1,\dots ,u_n\in K$ be distinct and set
\[Y=\{ (x,y)\in K^{m+1}: y=u_i \text{ and } x\in (\Spec R_i)(K) \text{ for some } i\leq n\}.\] 
Define the following equivalence relation on $Y$:
\[(x,u_i)\sim (y,u_j) \Leftrightarrow u_i=u_j \wedge x=y \text{ or } u_i\neq u_j, x\in U_{i,j}\text{ and } \varphi_{i,j}(y)=x.\]

As ACF eliminates imaginaries, the quotient set $Y/\sim$ is definable. Abusing notations we will denote it by $V(K)$. Identifying $(\Spec R_i)(K)$ with $(\Spec R_i)(K)\times \{u_i\}$, the maps $\varphi_i,\varphi_{i,j}$ are definable. We will say that $W$ is an affine open subset of $V(K)$ if $W=\varphi_i^{-1}(X)$ for some atlas $\{\varphi_i, \Spec R_i \}$ of $V$ and open affine $X\subseteq (\Spec R_i)(K)$. An open subset of $V(K)$ is a finite union of affine open subsets, so also definable.

\begin{remarkcnt}\label{R:cover-immat}
Note that the choice of the affine open cover is immaterial. Since the category of varieties with a distinguished open affine cover is equivalent to the category of varieties, the above construction gives a unique definable set upto definable isomorphisms.
\end{remarkcnt}

Assume that $G=V$ is an algebraic group. In this case $G(K)$ is a definable group, indeed the morphism \[m: G(K)\times G(K)\to G(K)\] is continuous, thus for every $i$, $m^{-1}(V_i)$ is a finite union of open affines and $m$ is determined by its restriction to these open affines. Each of these restrictions is definable. Similarly for inversion, so $G(K)$ is an definable group. $G(K)$ has generic types.

\begin{remark}
\begin{enumerate}
\item $G(K)$ is connected if and only if it is irreducible.
\item Since the map $Y\to G(K)$ is a definable finite-to-one surjection, \[\RM (Y)=\RM (G(K)).\] If $G(K)$ is connected then it has a unique generic type and 
\[\dim G=\RM (G(K))=\trdeg (K(a)/K),\]
for $a\models p|_K$, where $p$ is the generic type of $G(K)$.
\end{enumerate}
\end{remark}

From now on we also denote by $V$ the set the variety defines in the monster model, i.e. $V(\mathbb{U})$. Similarly for algebraic groups. 

%
%
%

\begin{remarkcnt}\label{R:ease of notations}
Technically, in order to talk about regular functions on affine open subsets of $V$ one must use the atlas maps. If $U\subseteq V$ is an affine open subset then there exits an atlas map $(\varphi,\Spec R)$, and some affine open subset $X\subseteq (\Spec R)(K)$ with $\varphi^{-1}(X)=U(K)$, where $(\varphi,R)$ is one of the $(\varphi_i,R_i)$ from above. A regular function on $U$ is of the sort $f(\varphi(x))$, where $f$ is a regular function on $X$.  We will omit the $\varphi$ and write $f(x)$ for ease of writing. As a consequence when writing for $f$ a regular function on $U$ and $p$ a definable type on $U$ \[(d_px)(\val (f(x))\geq \gamma)\] we actually mean \[(d_{\varphi_*p}x)(\val (f(x))\geq \gamma).\]

Recall that the pushforward of a generically stable, by a definable function, is also generically stable. As a result, since $\varphi$ is a definable isomorphism, $p$ is generically stable if and only if $\varphi_*p$ is generically stable.
\end{remarkcnt}

\subsubsection{Schemes over $\mathcal{O}_K$}\label{ss:interp. Grp.Sch}

Let $K$ be an algebraically closed valued field with valuation ring $\mathcal{O}_K$. 
Let $\mathcal{V}$ be a separated finitely presented scheme over $\Spec \mathcal{O}_K$. $\mathcal{V}$ has a finite open covering  by schemes isomorphic to $\Spec R_i$, $R_i:=\mathcal{O}_K[\bar{X}]/(f_{i,1},\dots f_{i,n})$ where $f_{i,j}\in \mathcal{O}_K[\bar{X}]$. 
$\mathcal{V}_K:=\mathcal{V}\times_{\mathcal{O}_K}K$ is thus a scheme of finite type over $K$ and we may identify $\mathcal{V}_K(K)$ with a definable set (in ACF).

The $\mathcal{O}_K$-points of a finitely presented affine scheme over $\mathcal{O}_K$, $\Spec \mathcal{O}_K[\bar{X}]/(f_1,\dots,f_n)$, may be identified with the definable set (in ACVF): \[\{\bar{a}\in\mathcal{O}^{|\bar{X}|}_K: f_1(\bar{a})=\dotso=f_n(\bar{a})=0\}.\]
In general there is a map $\mathcal{V}(\mathcal{O}_K)\to \mathcal{V}_K(K)$, but, since $\mathcal{V}$ is separated, by the valuative criterion of separatedness \cite[Theorem 15.8]{gortz}:

\begin{fact}
$\mathcal{V}(\mathcal{O}_K)\hookrightarrow \mathcal{V}_K(K)$.
\end{fact}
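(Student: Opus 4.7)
The plan is to apply the valuative criterion of separatedness directly. First, one constructs the map $\mathcal{V}(\mathcal{O}_K)\to \mathcal{V}_K(K)$: given an $\mathcal{O}_K$-point $\sigma\colon \Spec \mathcal{O}_K\to \mathcal{V}$ (a morphism over $\Spec \mathcal{O}_K$), precomposition with the canonical morphism $\Spec K\to \Spec \mathcal{O}_K$ induced by the localization $\mathcal{O}_K\hookrightarrow K$ yields a morphism $\Spec K\to \mathcal{V}$. By the universal property of the fibered product $\mathcal{V}_K=\mathcal{V}\times_{\mathcal{O}_K} K$, this morphism factors uniquely through a $K$-point of $\mathcal{V}_K$. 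This assignment is visibly functorial and is the map in question.

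For injectivity, suppose $\sigma_1,\sigma_2\colon \Spec \mathcal{O}_K\to \mathcal{V}$ induce the same $K$-point of $\mathcal{V}_K$. Unwinding the construction, this means that the two compositions $\Spec K\to \Spec \mathcal{O}_K \xrightarrow{\sigma_i} \mathcal{V}$ agree. Consequently, both $\sigma_1$ and $\sigma_2$ are lifts along the structure morphism $\mathcal{V}\to \Spec \mathcal{O}_K$ of a single commutative square whose top edge is this common morphism $\Spec K\to \mathcal{V}$ and whose bottom edge is the identity on $\Spec \mathcal{O}_K$. Since $\mathcal{V}\to \Spec \mathcal{O}_K$ is separated by hypothesis, and $\mathcal{O}_K$ is a valuation ring with fraction field $K$, the valuative criterion of separatedness (cited above as \cite[Theorem 15.8]{gortz}) forces $\sigma_1=\sigma_2$, giving injectivity.

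There is no real obstacle in this argument; the entire content is the valuative criterion, and the only work is to match the hypotheses of that criterion to the situation at hand (i.e.\ to recognize that $\mathcal{O}_K$ itself is the valuation ring being tested against). I would also remark that the finite presentation assumption on $\mathcal{V}$ plays no role in this injectivity statement; its purpose is the separate one of identifying $\mathcal{V}(\mathcal{O}_K)$ with an ACVF-definable subset of $\mathcal{V}_K(K)$, as used throughout the rest of the paper.
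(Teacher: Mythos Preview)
Your argument is correct and matches the paper's approach exactly: the paper does not give a proof but simply cites the valuative criterion of separatedness \cite[Theorem 15.8]{gortz}, and you have spelled out the standard verification that this criterion yields injectivity of $\mathcal{V}(\mathcal{O}_K)\to \mathcal{V}_K(K)$.
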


Since a $K$-point of $\mathcal{V}_K$ is an $\mathcal{O}_K$-point of $\mathcal{V}$ if and only if it is an $\mathcal{O}_K$-point of $\Spec R_i$ for some $i$, $\mathcal{V}(\mathcal{O}_K)$ is a definable subset of $\mathcal{V}_K(K)$.

\begin{remark}
As in Remark \ref{R:cover-immat}, the choice of the open affine cover is immaterial.
\end{remark}

Similarly to Section \ref{ss:alg.grps}, if $\mathcal{V}=\mathcal{G}$ is a separated finitely presented group scheme over $\mathcal{O}_K$, $\mathcal{G}(\mathcal{O}_K)$ is a definable subgroup of $\mathcal{G}_K(K)$.

\subsubsection{Schemes over $\mathcal{O}_K$ as Pro-Definable Sets}\label{ss:Pro-def-grps}
Not all schemes are finitely presented, but that does not mean we can not access them within the definable world.

\begin{fact}\cite[Tag 01YT]{stacks-project}\label{F:directed-limit}
Let $S$ be a quasi-compact quasi-separated scheme over an affine scheme $R$, i.e. $S$ can be covered by a finite number of affine open subschemes, any two of which have intersection also covered by a finite number of affine open subschemes.

\begin{enumerate}
\item There exists a directed inverse system of schemes $(S_i, \pi_{ij})$ with affine transition maps such that each $S_i$ is a finitely presented schemes over $R$ and $S=\varprojlim_i S_i$.
\item For every quasi-compact open $U\subseteq S$ there exists $i_U$ and opens $U_i\subseteq S_i$ such that $\pi_i^{-1}(U_i)=U$ and $U=\varprojlim_{i\geq i_U} U_i$, where $\pi_i:S\to S_i$ is the natural projection morphism.

\end{enumerate}
\end{fact}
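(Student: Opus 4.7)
The plan is to first handle the affine case and then glue. If $S = \Spec A$ is affine over $R = \Spec R_0$, write $A = \varinjlim_i A_i$ as the filtered colimit of its finitely presented $R_0$-subalgebras: any finite tuple in $A$ satisfies some (perhaps infinite) collection of $R_0$-polynomial relations, but choosing a finite subset of them yields a finitely presented $R_0$-subalgebra, and these form a directed system under inclusion whose colimit is $A$. Then $S = \varprojlim_i \Spec A_i$ in schemes, each $\Spec A_i$ is finitely presented over $R$, and the transition maps, coming from ring inclusions, are affine.

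For quasi-compact quasi-separated $S$, I would fix a finite affine cover $S = V_1 \cup \dots \cup V_n$ with $V_j = \Spec B_j$. Quasi-separatedness forces each overlap $V_j \cap V_k$ to be quasi-compact, hence covered on each side by finitely many distinguished opens; this packages the geometry of $S$ into a finite collection of data: the rings $B_j$, finitely many elements $b_{jk\ell} \in B_j$ and $b_{kj\ell} \in B_k$, isomorphisms $(B_j)_{b_{jk\ell}} \cong (B_k)_{b_{kj\ell}}$, and finitely many cocycle identities on triple overlaps. Apply the affine case to each $B_j$ to obtain approximations $B_{j,i}$, and then choose $i$ large enough that all the $b_{jk\ell}$, all the gluing isomorphisms (as maps between finitely presented algebras), and all the cocycle equations descend exactly to level $i$. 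Gluing the $\Spec B_{j,i}$ along the approximated data produces a finitely presented $R$-scheme $S_i$ with affine transition maps, and $S = \varprojlim_i S_i$. The main obstacle is precisely this simultaneous descent: individual pieces approximate easily, but one must arrange every algebra, every isomorphism, and every cocycle equation to live and hold at one common finite stage. This rests on the standard principle that a morphism or equality among finitely presented objects holding in a filtered colimit already holds at a finite level; one applies it to each of the finitely many pieces of data and takes the maximum index.

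For part (2), a quasi-compact open $U \subseteq S$ is a finite union of distinguished opens $D(a_\alpha)$ inside the cover, and each $a_\alpha$ already lies in some $B_{j,i_\alpha}$. Taking $i_U$ to be the maximum of these indices, the elements $a_\alpha$ descend to $B_{j,i_U}$ and their distinguished opens glue to an open $U_{i_U} \subseteq S_{i_U}$ with $\pi_{i_U}^{-1}(U_{i_U}) = U$. For $i \geq i_U$ set $U_i := \pi_{i,i_U}^{-1}(U_{i_U})$; applying the affine case of the inverse limit description to each distinguished open in the cover of $U$ then yields $U = \varprojlim_{i \geq i_U} U_i$.
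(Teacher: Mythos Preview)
The paper does not prove this statement: it is recorded as a \emph{Fact} with a citation to the Stacks Project (Tag 01YT), so there is no proof in the paper to compare against. Your sketch is essentially the standard argument that the Stacks Project gives.

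One terminological slip worth flagging: in the affine case you write $A$ as a colimit of ``finitely presented $R_0$-subalgebras'' obtained by choosing a finite subset of the relations among a finite generating set. These are not subalgebras of $A$ in general---the natural map $R_0[x_1,\dots,x_n]/(f_1,\dots,f_m) \to A$ need not be injective, precisely because you have omitted relations. What you actually obtain is a directed system of finitely presented $R_0$-algebras \emph{equipped with maps to $A$}, indexed by pairs (finite set of generators, finite set of relations among them), and the colimit of this system is $A$. The transition maps are then not ring inclusions but the obvious maps induced by enlarging the generator set and the relation set; they are still affine on $\Spec$, which is all you need. With that correction the affine case and the gluing argument go through as you describe.
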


Let $K\models$ ACVF and $\mathcal{V}$ be a quasi-compact separated scheme over $\mathcal{O}_K$. By the above fact there exist finitely presented schemes over $\mathcal{O}_K$, $\mathcal{V}_i$, such that $\mathcal{V}=\varprojlim_i \mathcal{V}_i$. Since $\mathcal{V}$ is separated, there exists $i_0$ such that for all $i\geq i_0$, $\mathcal{V}_i$ is separated (\cite[Tag 086Q]{stacks-project}). Hence we may assume all the $\mathcal{V}_i$ are separated.

Observe that $\mathcal{V}_K:=\mathcal{V}\times_{\mathcal{O}_K}K=\varprojlim_i (\mathcal{V}_i)_K$. Following Section \ref{ss:interp. Grp.Sch} we may identify each of the $\mathcal{V}_i(\mathcal{O})$ with a definable subset of the corresponding $(\mathcal{V}_i)_K$. The transition maps correspond to definable maps. Thus we may identify $\mathcal{V}_K$ with a pro-definable set (in ACF) and $\mathcal{V}(\mathcal{O})$ with a pro-definable subset (in ACVF) of $\mathcal{V}_K$. 

Notice that a type of an element of $\mathcal{V}$, $p$, is a compatible (with respect to the transition maps) sequence of types $(p_i)_i$ such that $p_i$ is concentrated on $\mathcal{V}_i$.

If $U\subseteq \mathcal{V}_K$ is an open subscheme with $U=\varprojlim_{i\geq i_U} U_i$ then a regular function $f$ on $U$ corresponds to a regular function on some $U_i$. The following is easy.

\begin{lemma}
\begin{enumerate}
\item Such a type $p=(p_i)_i$ is Zariski dense in $\mathcal{V}_K$ if and only if $p_i$ is Zariski dense in $(\mathcal{V}_i)_K$ for all $i$.
\item If $\mathcal{V}$ is irreducible (respectively, integral) then there exits $i_0$ such that for all $i\geq i_0$, $\mathcal{V}_i$ is irreducible (respectively, integral).

\end{enumerate}
\end{lemma}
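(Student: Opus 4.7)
The plan is to prove both parts by standard descent/refinement techniques for inverse systems of finitely presented schemes with affine transition maps, leaning on Fact \ref{F:directed-limit}.

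For part (1), the direction $(\Leftarrow)$ is the straightforward one. Suppose $p$ concentrates on a closed subscheme $Z \subseteq \mathcal{V}_K$. Since $p$ is a definable type, $Z$ is definable and hence finitely presented. By the descent of finitely presented closed subschemes (an analogue of Fact \ref{F:directed-limit}(2), obtained by applying it to the retrocompact open complement of $Z$ in the quasi-compact $\mathcal{V}_K$), there exist an index $j$ and a closed $Z_j \subseteq (\mathcal{V}_j)_K$ with $Z = \pi_j^{-1}(Z_j)$, where $\pi_j\colon \mathcal{V}\to \mathcal{V}_j$ is the canonical projection. The pushforward $p_j = (\pi_j)_* p$ then concentrates on $Z_j$, and Zariski density of $p_j$ forces $Z_j = (\mathcal{V}_j)_K$, whence $Z = \mathcal{V}_K$.

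For $(\Rightarrow)$, one attempts the dual argument: if $p_i$ concentrates on a proper closed $Z_i \subsetneq (\mathcal{V}_i)_K$, then $p$ concentrates on $\pi_i^{-1}(Z_i)$. The subtlety is that $\pi_i^{-1}(Z_i)$ could fail to be proper in $\mathcal{V}_K$ if the transition maps eventually kill the defining ideal of $Z_i$. To circumvent this I would first pass to a cofinal subsystem in which every transition is scheme-theoretically dominant, concretely by replacing each $\mathcal{V}_j$ with the scheme-theoretic image of $\mathcal{V}\to \mathcal{V}_j$. This refinement preserves the limit, preserves Zariski density of each $p_j$ (since Zariski density of the ambient type can only get easier to violate when the ambient scheme shrinks), and guarantees that $\pi_i^{-1}(Z_i)$ remains a proper closed subscheme of $\mathcal{V}_K$, contradicting Zariski density of $p$.

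For part (2), this is a classical limit statement (see e.g.\ the Stacks Project). I would give a direct proof by passing to the dominant subsystem as above: then the image of $\pi_i\colon \mathcal{V}\to \mathcal{V}_i$ is irreducible as the continuous image of the irreducible $\mathcal{V}$, and it is dense in $\mathcal{V}_i$ by dominance, forcing $\mathcal{V}_i$ to be irreducible for all $i\geq i_0$. The main obstacle in both parts is the refinement step---verifying that the scheme-theoretic image of $\mathcal{V}\to \mathcal{V}_j$ remains a finitely presented closed subscheme and that the refined inverse system still has $\mathcal{V}$ as its limit. This is routine in the quasi-compact setting but is the one point worth spelling out in detail.
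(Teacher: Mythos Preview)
The paper provides no proof beyond calling the lemma ``easy,'' so there is no argument to compare against directly.

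Your argument for $(\Leftarrow)$ in part (1) has a gap. You claim that since $p$ is definable, any closed $Z \subseteq \mathcal{V}_K$ on which $p$ concentrates is ``definable and hence finitely presented.'' This does not follow: $Z$ is an \emph{arbitrary} closed subscheme chosen to test density, and definability of $p$ imposes no constraint on such a $Z$ (for instance $Z=\mathcal{V}_K$ itself is one, and it need not be finitely presented). Hence the complement of $Z$ need not be retrocompact and your descent step is not available. The repair is immediate and is presumably what the paper has in mind: if $Z\subsetneq\mathcal{V}_K$, its open complement contains a nonempty affine open $U$, which is quasi-compact and therefore descends via Fact~\ref{F:directed-limit}(2) to some nonempty $U_j\subseteq(\mathcal{V}_j)_K$; then $p_j$ misses $U_j$, contradicting density of $p_j$.

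For $(\Rightarrow)$ you correctly isolate a genuine issue: $\pi_i$ need not be dominant, so $\pi_i^{-1}(Z_i)$ can equal $\mathcal{V}_K$ even when $Z_i$ is proper. Indeed the literal forward implication can fail for an unlucky presentation (take $\mathcal{V}=\Spec\mathcal{O}_K$ with $\mathcal{V}_0=\mathbb{A}^1_{\mathcal{O}_K}$ and $\mathcal{V}_i=\Spec\mathcal{O}_K$ for $i\geq 1$). Your remedy of passing to scheme-theoretic images is the standard one and is how the statement should be read; the paper is tacitly working up to such a cofinal refinement, as confirmed by the sentence immediately after the lemma. Your treatment of (2) along the same lines is fine.
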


A definable a function $f$ on $\mathcal{V}$ is a compatible (with respect to the transition maps) sequence of definable functions $(f_i)_i$ such that $f_i$ is a definable function on $\mathcal{V}_i$. Since the system $(\mathcal{V}_i,\pi_{ij})$ is directed, a definable function on $p$ is a definable function on one of the $p_i$, and consequently:
\begin{lemma}
A type $p=(p_i)_i$ on $\mathcal{V}$ is stably dominated if and only if $p_i$ is stably dominated for each $i$.
\end{lemma}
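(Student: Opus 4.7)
The plan is to reduce both directions to the characterization from Fact~2.1.X that, in ACVF, stable domination for a definable type is equivalent to orthogonality to $\Gamma$. The key tool is the observation (stated in the paragraph preceding the lemma) that any definable function on $p$ factors through one of the finitely presented pieces $p_i$, so that orthogonality to $\Gamma$ for the pro-definable type $p$ is controlled by orthogonality to $\Gamma$ of the individual $p_i$.

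For the forward direction, suppose $p$ is stably dominated. For each $i$, the projection $\pi_i : \mathcal{V} \to \mathcal{V}_i$ is definable (each component map is), and $p_i = (\pi_i)_* p$. Since pushforwards of generically stable types by definable maps are generically stable (recorded in the remark following Fact~2.1.X on generically stable types), $p_i$ is generically stable, hence stably dominated.

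For the reverse direction, suppose each $p_i$ is stably dominated. I will verify that $p$ is orthogonal to $\Gamma$ and then invoke the fact. Let $A \subseteq B$ and let $f$ be a $B$-definable function on $p$ into $\Gamma$. By the remark immediately above the lemma (using that the inverse system $(\mathcal{V}_i, \pi_{ij})$ is directed), there exists an index $i$ and a $B$-definable function $g : \mathcal{V}_i \to \Gamma$ with $f = g \circ \pi_i$ on the locus of $p$. Then
\[
f_* p \;=\; g_*\bigl((\pi_i)_* p\bigr) \;=\; g_* p_i.
\]
Since $p_i$ is stably dominated, it is orthogonal to $\Gamma$, so $g_* p_i$ is a constant type, and hence so is $f_* p$. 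This proves $p$ is orthogonal to $\Gamma$, and therefore stably dominated.

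The only subtle point is reconciling the notion of stable domination for a pro-definable type $p$ with the usual definition for definable types; this is handled precisely by the factoring-through observation, so no genuine obstacle arises beyond correctly invoking it.
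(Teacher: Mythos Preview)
Your proof is correct and follows essentially the same approach as the paper, which simply records the lemma as a consequence of the preceding observation that a definable function on $p$ factors through one of the $p_i$, without further elaboration. Your use of orthogonality to $\Gamma$ is a natural way to make this precise; the forward direction via pushforwards and the reverse via factoring $\Gamma$-valued functions through some $p_i$ is exactly what the paper's ``and consequently'' is gesturing at.
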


Similarly, if $\mathcal{G}$ is a quasi-compact separated group scheme over $\mathcal{O}_K$ then one may interpret $\mathcal{G}_K$ as a pro-definable group with $\mathcal{G}(\mathcal{O})$ as a pro-definable subgroup. 

\section{Some Facts on Schemes over Valuation Rings}\label{S:some facts}

\subsection{Facts on Schemes over General Valuation Rings}\label{ss: general facts on..}
Let $R$ be a valuation ring. The following properties are well known.

\begin{fact}\cite{nagata1966}\label{F:tors-free}
$M$ is a flat $R$-module if and only if it is torsion-free.
\end{fact}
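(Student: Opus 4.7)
The forward direction holds over any domain: if $M$ is flat and $0 \neq r \in R$, then the injection $R \xrightarrow{\cdot r} R$ (injective because $R$ is a domain) remains injective after tensoring with $M$, so multiplication by $r$ on $M$ is injective; thus $M$ is torsion-free.

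For the converse I would use the equational criterion for flatness: $M$ is flat over $R$ iff for every relation $\sum_{i=1}^n r_i m_i = 0$ with $r_i \in R$ and $m_i \in M$, there exist elements $m'_j \in M$ and $a_{ij} \in R$ with $m_i = \sum_j a_{ij} m'_j$ and $\sum_i a_{ij} r_i = 0$ for each $j$. Given such a relation in a torsion-free $M$, first discard any terms with $r_i = 0$. Then I would exploit the defining property of valuation rings that their ideals are totally ordered by inclusion: among the $r_i$ there is one, say $r_1$, whose valuation is minimal, so that $r_1 \mid r_i$ for every $i$. Writing $r_i = r_1 s_i$ with $s_1 = 1$, the relation becomes $r_1\bigl(m_1 + \sum_{i \geq 2} s_i m_i\bigr) = 0$, and torsion-freeness forces $m_1 = -\sum_{i \geq 2} s_i m_i$.

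The exhibited decomposition is then $m'_j = m_j$ for $j \geq 2$, with $a_{1j} = -s_j$ and $a_{ij} = \delta_{ij}$ for $i \geq 2$. A direct check gives $\sum_i a_{ij} r_i = -s_j r_1 + r_j = 0$, verifying the equational criterion, so $M$ is flat.

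The only nontrivial step is locating the right $r_i$ with which to "solve" for one of the $m_i$; this is precisely where the hypothesis that $R$ is a valuation ring (and not merely a domain) is used, through the total order on principal ideals. Everything else reduces to bookkeeping once the equational criterion is invoked.
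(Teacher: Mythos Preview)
Your proof is correct. The paper does not actually prove this statement; it is recorded as a \texttt{fact} with a bare citation to Nagata, so there is no ``paper's own proof'' to compare against. Your argument via the equational criterion for flatness is a standard and efficient route: the forward direction is valid over any domain, and for the converse you correctly isolate the one place where the valuation-ring hypothesis enters, namely the total ordering of principal ideals that lets you divide every $r_i$ by a single $r_1$. The bookkeeping with the $a_{ij}$ checks out, and the handling of the $r_i = 0$ terms is harmless (those indices can be absorbed by taking $m'_i = m_i$ with the corresponding column condition $\sum_i r_i a_{ij} = 0$ being automatic).
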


\begin{fact}\cite{nagata1966}\label{F:f.t-f.p}
A flat scheme of finite type over a valuation ring is finitely presented.
\end{fact}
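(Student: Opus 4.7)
The plan is to reduce the statement to an algebraic one about $R$-algebras, then combine the B\'ezout property of a valuation ring with the Noetherianity of the generic fiber. A scheme of finite type over $R$ can be covered by finitely many affine opens; finite presentation is an affine-local property and flatness restricts to opens, so it suffices to show that if $R$ is a valuation ring and $A=R[x_1,\dots,x_n]/I$ is $R$-flat, then $I$ is a finitely generated ideal of $B:=R[x_1,\dots,x_n]$.

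Next I would exploit the degree filtration on $B$. Let $B_d$ be the free $R$-submodule of polynomials of total degree at most $d$ and set $I_d:=I\cap B_d$. The natural map $B_d/I_d\to A$ is injective, so $B_d/I_d$ is a finitely generated $R$-submodule of $A$. By Fact~\ref{F:tors-free}, $A$ is torsion-free over $R$, hence so is $B_d/I_d$; since a valuation ring is B\'ezout, every finitely generated torsion-free module over $R$ is free, so $B_d/I_d$ is free and the sequence $0\to I_d\to B_d\to B_d/I_d\to 0$ splits. Thus each $I_d$ is itself a finitely generated (in fact free) $R$-module, and moreover a direct summand of $B_d$.

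Finally I would pass to the generic fiber. Set $K=\mathrm{Frac}(R)$; since $K[x_1,\dots,x_n]$ is Noetherian, $IK[x]$ is finitely generated, and after clearing denominators one picks generators $g_1,\dots,g_m\in I$ that lie in some $B_{d_0}$. Let $J:=(g_1,\dots,g_m)B\subseteq I$. Then $I/J$ becomes zero after inverting any nonzero element of $R$, hence is $R$-torsion, and using Fact~\ref{F:tors-free} one checks that $I$ is the saturation of $J$ inside $B$ with respect to $R\setminus\{0\}$. The main obstacle is to show that this saturation is itself finitely generated, i.e.\ that $I=J$ after at most finitely many further additions. This does not follow directly from $IK[x]=JK[x]$, because the quotient $B/J$ need not be flat and its torsion need not vanish. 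The strategy, following Nagata, is to use the previous step together with the total-ordering of the ideals of $R$ (the algebraic heart of the valuation-ring hypothesis) to control uniformly the degrees of the torsion-killers one has to adjoin: one argues that enriching $J$ by such killers only has to be performed up to a bounded degree before the quotient becomes torsion-free, at which point flatness forces equality. This degree-uniform termination is the technical heart of the argument and the place where I would expect the most difficulty.
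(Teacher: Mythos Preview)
The paper does not prove this statement; it is cited as a fact from Nagata (1966) without argument, so there is no in-paper proof to compare your proposal against.

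Your outline is correct as far as it goes, and you honestly flag its own gap. The affine reduction, the freeness of each $B_d/I_d$ (hence of each $I_d$) via Fact~\ref{F:tors-free}, and the production of $g_1,\dots,g_m\in I$ generating $IK[x]$ are all right. But the passage from ``$I/J$ is $R$-torsion'' to ``$I$ is finitely generated'' is the whole content of Nagata's theorem, and your final paragraph does not supply it: ``total ordering of ideals'' and ``degree-uniform termination'' describe a hope, not an argument, and over a non-Noetherian base there is no a~priori reason the process of adjoining torsion-killers to $J$ terminates. One concrete ingredient you have not used is the interaction of flatness with \emph{content}: over a valuation ring the content ideal of any nonzero $f\in R[x_1,\dots,x_n]$ is principal, say $(c_f)$, and since $c_f\cdot(f/c_f)=f\in I$ with $B/I$ torsion-free one gets $f/c_f\in I$, so $I$ is generated by its primitive elements. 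In one variable this together with Gauss's lemma (content is multiplicative over a valuation ring) already shows $I=(g)$ for $g$ a primitive generator of $I_K$; in several variables further work is needed, but it is this kind of divisibility control---not a bare appeal to the ordering of ideals of $R$---that closes the gap.
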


The following is straightforward.

\begin{fact}\label{L:dom+red=injective}
Let $X=\Spec A$ and $Y=\Spec B$ be affine schemes with $Y$ reduced. Then $X\to Y$ is dominant if and only if $B\to A$ is injective.
\end{fact}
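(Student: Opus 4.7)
The plan is to translate the topological condition of dominance into a purely algebraic condition on $\varphi^{\#}\colon B\to A$, the ring homomorphism corresponding to the morphism $\varphi\colon X=\Spec A\to Y=\Spec B$, and then invoke reducedness of $B$.

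First I would recall the standard computation of the closure of the image of a morphism of affine schemes: for any ring map $\varphi^{\#}\colon B\to A$, the set-theoretic image of $\varphi$ lies in $V(\ker \varphi^{\#})$, and its closure is exactly $V(\ker\varphi^{\#})$. Indeed, any prime of $B$ containing $\ker\varphi^{\#}$ is the contraction of some prime of $A$ (by the lifting-of-primes argument applied to $A\otimes_B B_{\mathfrak{p}}/\mathfrak{p}B_{\mathfrak{p}}$, or more directly using that $B/\ker\varphi^{\#}\hookrightarrow A$ factors $\varphi^{\#}$). This is entirely routine and I would just cite it or spell out one direction quickly.

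Next, by definition $\varphi$ is dominant iff the closure of its image is all of $Y$, i.e.\ iff $V(\ker\varphi^{\#})=\Spec B$, which is equivalent to $\ker\varphi^{\#}\subseteq\bigcap_{\mathfrak{p}\in\Spec B}\mathfrak{p}=\mathrm{Nil}(B)$, the nilradical of $B$. At this point the hypothesis that $Y$ is reduced enters: $\mathrm{Nil}(B)=0$, so dominance of $\varphi$ is equivalent to $\ker\varphi^{\#}=0$, that is, to injectivity of $B\to A$. The converse direction (injectivity implies dominance) requires no reducedness hypothesis and follows immediately.

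There is no real obstacle here; the only thing to be careful about is the identification of the closure of the image with $V(\ker\varphi^{\#})$. Since the statement is labeled a Fact and marked straightforward, I would present it in two short sentences at most, with the reducedness of $B$ doing the only nontrivial work.
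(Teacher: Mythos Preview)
Your argument is correct and is the standard one; the paper does not supply a proof at all, labeling the statement a Fact preceded by ``The following is straightforward.'' One small slip worth flagging: your parenthetical justification that ``any prime of $B$ containing $\ker\varphi^{\#}$ is the contraction of some prime of $A$'' is false in general (take $\mathbb{Z}\hookrightarrow\mathbb{Q}$, where no nonzero prime of $\mathbb{Z}$ lies in the image), so the fiber/lifting argument you sketch does not work as stated. What you actually need, and correctly state just before, is the weaker fact that the \emph{closure} of the image is $V(\ker\varphi^{\#})$; this follows because any closed set $V(J)$ containing the image satisfies $\varphi^{\#}(J)\subseteq\mathrm{Nil}(A)$, whence $J\subseteq\sqrt{\ker\varphi^{\#}}$.
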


Recall that if $\mathcal{V}$ is a scheme over $R$, then a $R$-point corresponds to a section $s:\Spec R\to \mathcal{V}$ of $\mathcal{V}\to \Spec  R$.

\begin{proposition}\label{P:faith-flat}
Let $\mathcal{V}$ be an integral scheme over $R$. If $\mathcal{V}$ has a $R$-point then $\mathcal{V}$ is faithfully flat over $R$.
\end{proposition}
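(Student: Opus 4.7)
The plan is to decompose ``faithfully flat'' into ``flat'' plus ``surjective on spectra'' and handle each separately. Surjectivity of the structure morphism $f \colon \mathcal{V} \to \Spec R$ is immediate from the section: if $s$ denotes the given $R$-point, then $f \circ s = \mathrm{id}_{\Spec R}$, so $f$ is a retraction and hence surjective.

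For flatness, Fact~\ref{F:tors-free} reduces the problem to showing that for every nonempty affine open $U = \Spec A \subseteq \mathcal{V}$, the $R$-module $A$ is torsion-free, where $R$ acts through the structure map $\phi \colon R \to A$. The key geometric input is that the generic point $\eta$ of $\mathcal{V}$ must lie above the generic point $(0) \in \Spec R$. To see this, set $v := s((0))$; then $f(v) = (0)$, and by irreducibility of $\mathcal{V}$, $\eta$ specializes to $v$, so by continuity $f(\eta)$ specializes to $(0)$. Since $(0)$ is itself the generic point of $\Spec R$, we conclude $f(\eta) = (0)$.

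Because $\eta$ belongs to every nonempty open of $\mathcal{V}$, the unique minimal prime $\mathfrak{p} \subseteq A$ corresponding to $\eta$ satisfies $\phi^{-1}(\mathfrak{p}) = (0)$. In particular, $\phi$ is injective and $\phi(r) \notin \mathfrak{p}$ for every $r \in R \setminus \{0\}$. Under reducedness of $\mathcal{V}$ (so that $\mathfrak{p} = 0$ and $A$ is a domain), the relation $r \cdot a = 0$ with $r \neq 0$ reads as $\phi(r) a = 0$ in the domain $A$ with $\phi(r) \neq 0$, forcing $a = 0$. Thus $A$ is torsion-free, $\mathcal{V} \to \Spec R$ is flat, and combined with surjectivity we obtain faithful flatness.

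The one step that is not purely formal is the passage from $\phi(r) \notin \mathfrak{p}$ to $\phi(r)$ being a non-zero-divisor in $A$; this is where one genuinely needs $A$ to be an integral domain rather than merely having a unique minimal prime, i.e. one is implicitly treating ``irreducible scheme'' in the paper's integral sense. Everything else is a bookkeeping exercise in specialization and the definition of fibers.
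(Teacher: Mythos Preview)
Your proof is correct and follows essentially the same strategy as the paper: surjectivity comes from the section, and flatness from showing that $R \to A$ is injective on every affine open $\Spec A \subseteq \mathcal{V}$ and then invoking Fact~\ref{F:tors-free}. Your generic-point argument is a mild repackaging of the paper's route, which first obtains injectivity on the chart containing the $R$-point, converts this to dominance of $\mathcal{V} \to \Spec R$ via Fact~\ref{L:dom+red=injective}, spreads dominance to every nonempty open by irreducibility, and converts back to injectivity.

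Your caveat about needing $A$ to be a domain is not merely prudent but necessary, and the paper's argument at this step is in fact flawed as written. The Claim inside the paper's proof asserts that in an irreducible ring every zero divisor is nilpotent; this is false. Take $R$ a DVR with uniformizer $t$ and $A = R[x]/(x^2, tx)$: the unique minimal prime is $(x)$, the structure map $R \to A$ is injective, there is an $R$-point $x \mapsto 0$, and yet $t \cdot x = 0$ with $t$ non-nilpotent and $x \neq 0$, so $A$ is not $R$-flat. What is true in an irreducible ring is only that in any relation $ab = 0$ at least \emph{one} of $a,b$ is nilpotent; when that one is $a$ rather than $\rho(r)$, the paper's contradiction does not fire. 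Thus the proposition genuinely requires $\mathcal{V}$ integral (equivalently, irreducible and reduced), which holds in every application later in the paper. Your explicit flagging of this hypothesis is exactly right.
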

\begin{proof}
Let $\mathcal{U}=\Spec A$ be an affine open subscheme of $\mathcal{V}$ containing the given $R$-point. Since the $R$-point gives a section of $\mathcal{U}\to \Spec R$, the corresponding homomorphism $R\to A$ is injective. The following Claim proves that $\mathcal{U}$ is flat over $R$.
\begin{claim}
Let $A$ be an integral domain and \[R\xrightarrow{\rho} A\] a ring homomorphism. If $\rho$ is injective then $A$ is a $R$-flat module via $\rho$.
\end{claim}
\begin{claimproof}
Assume $\rho$ is injective. Since $R$ is a valuation ring, if $A$ is not a flat $R$ module, by \Cref{F:tors-free}, there exist $a$ and $r$, both non-zero, such that $\rho(r)a=0$. Since $A$ is an integral domain, $\rho(r)=0$, so $r=0$, contradiction.
\end{claimproof}

Also, since $R$ is reduced, by the previous lemma, $\mathcal{U}\to \Spec R$ is dominant and hence, since $\mathcal{V}$ is integral, so is $\mathcal{V}\to \Spec R$. 

Because flatness is open on the source it is enough to show that $X$ is flat over $R$ for every affine open subscheme $X$ of $\mathcal{V}$. For such an $X$, since $\mathcal{V}$ is integral, $X\to \Spec R$ is also dominant and hence flat by Lemma \ref{L:dom+red=injective} and the Claim.

$\mathcal{V}$ is faithfully flat because $\mathcal{V} \to \Spec R$ is surjective by the existence of the section.
\end{proof}


Since we will mostly deal with integral schemes over $\Spec R$ that do have $R$-points, we will usually use "of finite type" as our finiteness condition remembering that it implies "finitely presented".

\begin{proposition}\label{P:V irr then general}
Let $F$ be a valued field with valuation ring $R$. Let $\mathcal{V}$ be an irreducible scheme over $R$, with $\mathcal{V}\to \Spec R$ dominant. Then 
\[\mathcal{V}\times_{\Spec R} \Spec F\] is irreducible as well.
\end{proposition}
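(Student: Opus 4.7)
The plan is to exhibit a generic point of $\mathcal{V}_F:=\mathcal{V}\times_{\Spec R}\Spec F$; once such a point is produced, irreducibility is automatic, since the closure of a single point is always irreducible.

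First I would unpack the fiber product topologically. Because $R$ is a valuation ring it is in particular an integral domain, so $\Spec R$ is irreducible with unique generic point $\eta=(0)$ and residue field $k(\eta)=F$. Thus $\mathcal{V}_F$ is precisely the scheme-theoretic fiber of the structure morphism $f:\mathcal{V}\to\Spec R$ over $\eta$, and by the standard description of fibers of morphisms of schemes, its underlying topological space is canonically homeomorphic to $f^{-1}(\eta)\subseteq\mathcal{V}$ with the subspace topology.

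Next I would locate a candidate generic point. Let $\xi$ be the unique generic point of $\mathcal{V}$. Continuity of $f$ combined with density of $\{\xi\}$ in $\mathcal{V}$ gives $\overline{f(\mathcal{V})}=\overline{\{f(\xi)\}}$; dominance of $f$ then forces $\overline{\{f(\xi)\}}=\Spec R$, and uniqueness of the generic point of $\Spec R$ forces $f(\xi)=\eta$, so $\xi\in\mathcal{V}_F$. Computing the closure of $\{\xi\}$ inside the subspace,
\[
\overline{\{\xi\}}^{\mathcal{V}_F}=\overline{\{\xi\}}^{\mathcal{V}}\cap\mathcal{V}_F=\mathcal{V}\cap\mathcal{V}_F=\mathcal{V}_F,
\]
so $\xi$ is a generic point of $\mathcal{V}_F$ and the latter is irreducible.

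There is essentially no obstacle: the argument is a one-line topological observation once the fiber is identified with the subspace $f^{-1}(\eta)\subseteq\mathcal{V}$. Notably this does not use flatness (\Cref{P:faith-flat}) or the existence of an $R$-point, so the statement is really a purely topological consequence of irreducibility of $\mathcal{V}$ and dominance of $f$ over the irreducible base $\Spec R$.
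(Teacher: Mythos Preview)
Your argument is correct and is essentially the same as the paper's one-line proof: the paper observes that the generic fiber is dense in $\mathcal{V}$ (precisely because, as you show, it contains the generic point $\xi$) and then uses that a dense subspace of an irreducible space is irreducible. You have simply unpacked this by exhibiting $\xi$ explicitly as a generic point of $\mathcal{V}_F$, which is the same content stated in the language of generic points rather than density.
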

\begin{remark}
By the proof Proposition \ref{P:faith-flat}, if $\mathcal{V}$ is integral and has a $R$-point then $\mathcal{V}\to \Spec R$ is dominant.
\end{remark}
\begin{proof}
Since the generic fiber $\mathcal{V}\times_{\Spec R}\Spec F$  is dense in $\mathcal{V}$ it is also irreducible.
\end{proof}

\subsection{Facts on Schemes over $\mathcal{O}_K$}
Let $\mathcal{V}$ be a scheme over a valuation ring $R$ with residue field $k$. Let  $s:\Spec R\to \mathcal{V}$ be an $R$-point, i.e. a section of $\mathcal{V}\to \Spec  R$. Base changing with $\Spec k\to \Spec R$, gives a $k$-point $\Spec k\to \mathcal{V}_k$ of $\mathcal{V}_k$. This defines a map \[r:\mathcal{V}(R)\to \mathcal{V}_k(k).\]

We move to algebraically closed valued fields. Let $K\models$ ACVF and $\mathcal{O}_K$ its valuation ring.

In this section we will prove that if $\mathcal{V}$ is a quasi-compact separated irreducible scheme over $\mathcal{O}_K$, with an $\mathcal{O}_K$-point, then the map $r:\mathcal{V}(\mathcal{O})\to \mathcal{V}_k$ is surjective. We will prove it using arguments similar to the ones given in \cite[Lemma 6.6]{Metastable}.

\begin{remark}
By Fact \ref{F:tors-free}, an affine scheme $\Spec \mathcal{O}_K[\bar{X}]/I$ of finite type over $\mathcal{O}_K$ is flat over $\mathcal{O}_K$ if and only if $I=IK[\bar{X}]\cap \mathcal{O}_K[\bar{X}]$.
\end{remark}

The following is well known for varieties over $K$, but the same proof gives
\begin{fact}
Let $V=\Spec A$ be an irreducible affine scheme over $K$. It determines a pro-definable set in ACF, and has a unique generic type (a type which is not concentrated on any closed subvariety) $p$. Furthermore, if $b\models p|K$ then $A/Nil(A)\cong K[b]$, such an element $b$ is called a $K$-generic of $V$.
\end{fact}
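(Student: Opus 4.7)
The plan is to first prove the statement in the classical finite-type case using quantifier elimination for ACF, then bootstrap to arbitrary affine schemes via the pro-definable description from Section~\ref{ss:Pro-def-grps}.

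In the finite-type case, write $A = K[\bar X]/I$ for finitely many variables $\bar X$; irreducibility of $V$ forces $\sqrt{I}$ to be prime. Quantifier elimination in ACF gives a bijection between complete types over $K$ in the variables $\bar X$ and prime ideals of $K[\bar X]$, via $q \mapsto \mathfrak{p}_q := \{f\in K[\bar X] : (d_q x)(f(x) = 0)\}$. Types concentrated on $V(\mathbb{K})$ correspond exactly to primes containing $\sqrt{I}$, and such a type is concentrated on a proper closed subvariety if and only if $\mathfrak{p}_q \supsetneq \sqrt{I}$. Hence there is a unique generic type $p$, the one corresponding to $\sqrt{I}$ itself, and for any $b \models p|K$ the evaluation map yields $K[b] \cong K[\bar X]/\sqrt{I} = A/Nil(A)$.

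For the general case, write $A = \varinjlim A_i$ as a directed colimit of finitely generated $K$-subalgebras, so $V = \varprojlim V_i$ with $V_i = \Spec A_i$ affine of finite type. After replacing each $A_i$ by its image in the domain $A/Nil(A)$, we may assume each $V_i$ is irreducible (as in the lemma following Fact~\ref{F:directed-limit}); each transition map $V_j \to V_i$ is then dominant by Fact~\ref{L:dom+red=injective}, since the inclusion $A_i \hookrightarrow A_j$ of subrings of $A/Nil(A)$ is injective. By the finite-type case, each $V_i$ has a unique generic type $p_i$; because pushforwards of generics along dominant morphisms between irreducibles are generic, we get $(\pi_{ij})_\ast p_j = p_i$, so the family $(p_i)_i$ is compatible and defines a type $p$ on the pro-definable set $V$. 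For a realization $b = (b_i) \models p|K$, taking the direct limit in the isomorphisms $K[b_i] \cong A_i/Nil(A_i)$ yields $K[b] \cong A/Nil(A)$, since the nilradical commutes with filtered colimits.

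The main obstacle is to check that uniqueness of $p$ on $V$ genuinely reduces to uniqueness of each $p_i$ on $V_i$. This requires showing that every proper closed subscheme of $V$ is pulled back from a proper closed subscheme of some $V_i$, which follows from Fact~\ref{F:directed-limit}(2) applied to the quasi-compact open complement. Any candidate generic type on $V$ must then restrict on each $V_i$ to a type avoiding all proper closed subvarieties, i.e.\ to $p_i$, which pins down $p$ uniquely.
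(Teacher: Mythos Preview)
The paper does not actually prove this statement: it is recorded as a Fact with the comment that it ``is well known for varieties over $K$, but the same proof gives'' the general case, and no argument is supplied. So there is nothing in the paper to compare against; I will just assess correctness.

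Your argument is sound. The finite-type case via the correspondence between complete types over $K$ and prime ideals of $K[\bar X]$ is standard and correctly handled. The passage to arbitrary $A$ by writing $A = \varinjlim A_i$ over finitely generated $K$-subalgebras, and replacing each $A_i$ by its image in the domain $A/Nil(A)$ so that each $V_i$ is integral with dominant transition maps, is clean; the colimit identification $K[b] \cong A/Nil(A)$ then follows as you say.

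One imprecision worth flagging: in the uniqueness paragraph you appeal to Fact~\ref{F:directed-limit}(2) applied to ``the quasi-compact open complement'' of an arbitrary proper closed subscheme of $V$. But the complement of $V(J)$ in $\Spec A$ is quasi-compact only when $J$ is, up to radical, finitely generated, which need not hold here. Fortunately you do not need this. For genericity of $p$: given a proper closed $Z = V(J)$, pick any nonzero $f \in J$; then $f$ lies in some $A_i$, and $Z \subseteq \pi_i^{-1}(\{f = 0\})$, a proper closed subset of $V$ pulled back from $V_i$; since $p_i$ avoids the proper closed set $\{f=0\} \subsetneq V_i$, the type $p$ avoids $Z$. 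For uniqueness: if $q = (q_i)$ is generic on $V$ but some $q_{i_0}$ is concentrated on a proper closed $Z_{i_0} \subsetneq V_{i_0}$, then $q$ is concentrated on $\pi_{i_0}^{-1}(Z_{i_0})$, which is proper closed because $\pi_{i_0}$ is dominant---a contradiction. So each $q_i = p_i$ and $q = p$. This route is simpler than descending opens.
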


\begin{lemma}\label{L:Iso_to_generic}
Let $\mathcal{V}=\Spec A$ be an irreducible affine scheme which is flat over $\mathcal{O}_K$ and let $b$ be a $K$-generic point of $\mathcal{V}_K$. If $\mathcal{V}\to \Spec R$ is dominant then
\begin{enumerate}
\item $A/Nil(A)\cong \mathcal{O}_K[b];$
\item and if we further assume that $\mathcal{V}$ is integral and $\mathcal{V}_{k_K}$ has a $k_K$-point then for any $K\subseteq L$, where $L$ is an algebraically closed field for which $b\in L$, there exists a valuation ring $S\subseteq L$ such that $S\cap K=\mathcal{O}_K$ and $b\in S$. 
\end{enumerate}
\end{lemma}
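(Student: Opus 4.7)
The plan is to use Proposition~\ref{P:faith-flat} to reduce both parts to statements about the generic fibre $\mathcal{V}_K$, which is irreducible by Proposition~\ref{P:V irr then general}.

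For (1), the $\mathcal{O}_K$-point forces $A$ to be flat, hence torsion-free over $\mathcal{O}_K$ by Fact~\ref{F:tors-free}. A short calculation shows that this descends to $A/Nil(A)$: if $r\cdot a\in Nil(A)$ with $r\in\mathcal{O}_K\setminus\{0\}$, then $r^n a^n=0$ in $A$, and torsion-freeness forces $a^n=0$. So $A/Nil(A)$ is again flat, hence embeds into its base change to $K$. Since localisation commutes with nilradicals, this embedding factors as
\[A/Nil(A)\hookrightarrow (A\otimes_{\mathcal{O}_K}K)/Nil(A\otimes_{\mathcal{O}_K}K)\cong K[b],\]
the last isomorphism being the generic-point fact quoted just before the lemma, applied to the irreducible affine $K$-scheme $\mathcal{V}_K$. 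Writing $A=\mathcal{O}_K[X_1,\dots,X_n]/I$, this map sends $\bar X_i\mapsto b_i$, so the image of $A/Nil(A)$ is exactly $\mathcal{O}_K[b]$, yielding (1).

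For (2), the idea is to produce a valuation ring of $K(b)$ by dominating an appropriate local ring, and then extend it to $L$. The $\mathcal{O}_K$-point of $\mathcal{V}$ provides an $\mathcal{O}_K$-algebra homomorphism $A\to\mathcal{O}_K$; by (1) this descends to $\mathcal{O}_K[b]\to\mathcal{O}_K$, and composing with the residue map yields a prime $\mathfrak p$ of $\mathcal{O}_K[b]$ lying over $\mathfrak m_K$. Localising at $\mathfrak p$ and invoking the standard theorem that every local ring is dominated by some valuation ring of its fraction field produces $V\subseteq K(b)$ with $\mathfrak m_V\cap\mathcal{O}_K=\mathfrak m_K$. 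The check that matters is $V\cap K=\mathcal{O}_K$: the intersection is a valuation ring of $K$ containing $\mathcal{O}_K$, and since $\mathfrak m_V\cap K$ already contains $\mathfrak m_K$, the maximality of $\mathfrak m_K$ combined with the fact that a valuation ring of $K$ is determined by its maximal ideal among rings containing $\mathcal{O}_K$ rules out any strict inclusion. Finally I would extend $V$ to a valuation ring $S\subseteq L$ by the standard extension result for valuations; the intersection with $K$ is preserved, so $S\cap K=\mathcal{O}_K$ and $b\in V\subseteq S$, as required.

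The most delicate step should be the nilpotent bookkeeping in part (1) — verifying that $A/Nil(A)$ remains $\mathcal{O}_K$-flat, and that the nilradical genuinely commutes with the base change to $K$. Once the isomorphism $A/Nil(A)\cong\mathcal{O}_K[b]$ is in hand, part (2) is essentially a routine manipulation of local rings and valuation extensions.
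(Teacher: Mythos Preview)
Your proposal is correct and follows essentially the same route as the paper: for (1) you use flatness/torsion-freeness to embed $A/Nil(A)$ into $K[b]$ with image $\mathcal{O}_K[b]$, whereas the paper equivalently computes the kernel of $\mathcal{O}_K[\bar X]\to\mathcal{O}_K[b]$ directly via the characterisation $I=IK[\bar X]\cap\mathcal{O}_K[\bar X]$; for (2) both arguments use the $\mathcal{O}_K$-point to produce a prime of $\mathcal{O}_K[b]$ lying over $\mathfrak m_K$ and then invoke the standard domination theorem (the paper cites \cite[Theorem~10.2]{matsumura}) to obtain the desired valuation ring. The only cosmetic difference is that the paper extends $\mathfrak m_K$ to a maximal ideal of $\mathcal{O}_K[b]$ and applies the domination theorem inside $L$ directly, while you localise at a prime, dominate inside $K(b)$, and then extend to $L$ --- these are interchangeable.
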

\begin{proof}
\begin{enumerate} 
\item Let $A\cong \mathcal{O}_K[\bar{X}]/I$ and let $\mathcal{O}_K[\bar{X}]\to \mathcal{O}_K[b]$ be the natural surjection. If $f\in\mathcal{O}_K[\bar{X}]$ is such that $f(b)=0$ then since $\mathcal{V}_K$ is irreducible (see \Cref{P:V irr then general}) and $b$ is a $K$-generic point, $f^n\in IK[\bar{X}]$ and by flatness, $f^n\in I$, so $f\in \sqrt{I}$.
\item  Every point in $\mathcal{V}_{k_K}(k_K)$ arises from a $k_K$-algebra homomorphism \[\mathcal{O}_K[\bar X]/I\otimes_{\mathcal{O}_K}k_K\to k_K. \] By $(1)$ this induces a $k_K$-algebraic homomorphism
\[h:\mathcal{O}_K[b]\otimes_{\mathcal{O}_K}k_K\to k_K. \]
\begin{claim}
$\mathcal{M}_K=\{x\in K:\val (x)>0\}$ generates a proper ideal of $\mathcal{O}_K[b]$.
\end{claim}
\begin{claimproof}
Otherwise, for some finite number of $m_i\in\mathcal{M}_K$ and $f_i\in\mathcal{O}_K[\bar{X}]$ we would have \[\sum_i m_if_i(b_i)=1,\] but then applying $\otimes_{\mathcal{O}_K}k_K$ and then applying $h$ we reach a contradiction.
\end{claimproof}
Thus we may extend $\mathcal{M}_K$ to a maximal ideal $M$ of $\mathcal{O}_K[b]$. By \cite[Theorem 10.2]{matsumura}, there exists a valuation ring $\mathcal{O}_K[b]\subseteq \mathcal{O}_L\subseteq L$ such that $\mathcal{M}_L\cap \mathcal{O}_K[b]=M$, where $\mathcal{M}_L$ is the maximal ideal of $\mathcal{O}_L$, and in particular $\mathcal{M}_L\cap \mathcal{O}_K=\mathcal{M}_K$. 
As a consequence, the valuation on $K$ can be extended to $L$ in such a way that $b\in \mathcal{O}_L$.
\end{enumerate}
\end{proof}

The following was proved in \cite[Lemma 6.6]{Metastable} for the affine of finite type case.
\begin{proposition}\label{P:O-points-are-dense}
Let $\mathcal{V}$ be an irreducible quasi-compact separated scheme over $\mathcal{O}_K$. If $\mathcal{V}$ has an $\mathcal{O}_K$-point then $\mathcal{V}(\mathcal{O})$ is Zariski dense in $\mathcal{V}_K$.

Moreover, if $\mathcal{V}$ is of finite type over $\mathcal{O}_K$ then $\mathcal{V}(\mathcal{O}_K)$ is Zariski dense in $\mathcal{V}_K(K)$.
\end{proposition}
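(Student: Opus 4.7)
The plan is to prove the ``moreover'' (finite type) statement first, by reducing to the affine case and applying a model-theoretic transfer, and then deduce the general pro-definable statement by writing $\mathcal{V}$ as an inverse limit of finitely presented schemes.

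Assume first that $\mathcal{V}$ is of finite type over $\mathcal{O}_K$, and cover it by finitely many affine opens $U_1,\dots,U_n$ of finite type over $\mathcal{O}_K$; the distinguished $\mathcal{O}_K$-point lies in some $U_{i_0}$. By the remark following Proposition~\ref{P:V irr then general}, the existence of the section makes $\mathcal{V}\to\Spec\mathcal{O}_K$ dominant, so by that proposition $\mathcal{V}_K$ is irreducible, and hence the open $(U_{i_0})_K$ is Zariski dense in $\mathcal{V}_K$; it therefore suffices to establish density of $U_{i_0}(\mathcal{O}_K)$ in $(U_{i_0})_K(K)$. This reduces everything to the affine case $\mathcal{V}=\Spec A$ with $A=\mathcal{O}_K[\bar X]/(g_1,\dots,g_m)$. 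Given any nonzero regular function $f$ on $\mathcal{V}_K$, pick a $K$-generic $b$ of $\mathcal{V}_K$ in some algebraically closed extension $L\supseteq K$; Lemma~\ref{L:Iso_to_generic}(1) gives $A/Nil(A)\cong\mathcal{O}_K[b]$, and after scaling $f$ by an element of $K^{\times}$ we may assume $f\in\mathcal{O}_K[b]$, in which case $f(b)\ne 0$ because $b$ is $K$-generic. Lemma~\ref{L:Iso_to_generic}(2) then extends the valuation from $K$ to $L$ so that $b\in\mathcal{O}_L$, exhibiting $b$ as an $\mathcal{O}_L$-point of $\mathcal{V}$ that witnesses the existential sentence
\[\exists\bar x\Bigl(\bar x\in\mathcal{O}^n\wedge\bigwedge_{j=1}^{m} g_j(\bar x)=0\wedge f(\bar x)\ne 0\Bigr)\]
in $(L,\mathcal{O}_L)$. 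Since $(K,\mathcal{O}_K)\subseteq(L,\mathcal{O}_L)$ is an inclusion of models of ACVF, quantifier elimination yields $(K,\mathcal{O}_K)\preceq(L,\mathcal{O}_L)$, so the same sentence holds in $(K,\mathcal{O}_K)$ and supplies the required $\mathcal{O}_K$-point.

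For the general (quasi-compact separated) case, apply Fact~\ref{F:directed-limit} to write $\mathcal{V}=\varprojlim_i\mathcal{V}_i$ with each $\mathcal{V}_i$ separated and finitely presented, and irreducible for $i\geq i_0$ (by the second lemma of Section~\ref{ss:Pro-def-grps}). The given $\mathcal{O}_K$-point of $\mathcal{V}$ projects to an $\mathcal{O}_K$-point of each $\mathcal{V}_i$, so the finite type case, applied over the monster and using that irreducibility over the algebraically closed $K$ passes to $\mathbb{K}$, yields that $\mathcal{V}_i(\mathcal{O})$ is Zariski dense in $(\mathcal{V}_i)_K$ for every $i\geq i_0$. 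By Fact~\ref{F:directed-limit}(2) every quasi-compact open of $\mathcal{V}_K$ is pulled back from an open at some finite level, so the corresponding proper closed subschemes are also pulled back, and density at each finite level lifts through the inverse limit to Zariski density of $\mathcal{V}(\mathcal{O})$ in $\mathcal{V}_K$.

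The main obstacle is the affine case: constructing the $\mathcal{O}_L$-point via Lemma~\ref{L:Iso_to_generic} and descending it to an $\mathcal{O}_K$-point by model-theoretic transfer. The covering reduction and the limit argument are essentially formal consequences of Proposition~\ref{P:V irr then general} and the pro-definable set-up of Section~\ref{ss:Pro-def-grps}.
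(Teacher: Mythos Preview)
Your argument for the finite type case is essentially the same as the paper's: reduce to an affine open containing the given $\mathcal{O}_K$-point, use Lemma~\ref{L:Iso_to_generic} to realize a $K$-generic $b$ of $\mathcal{V}_K$ inside $\mathcal{O}_L$ for some valued field extension, and then descend by model completeness.

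The treatments of the general (not necessarily finite type) case differ. The paper does \emph{not} pass to the inverse limit presentation: after reducing to the affine case it simply allows $|\bar X|$ to be an infinite (small) tuple and applies Lemma~\ref{L:Iso_to_generic} directly, observing that the resulting $b\in\mathcal{O}_L$ is already an $\mathcal{O}$-point of $\mathcal{V}$ in the pro-definable sense and lies in any prescribed nonempty open. No transfer step is needed here, since the first assertion only asks for density of $\mathcal{V}(\mathcal{O})$ in the monster. Your route via $\mathcal{V}=\varprojlim_i\mathcal{V}_i$ is also valid, but the sentence ``density at each finite level lifts through the inverse limit'' hides a step: from the fact that for every $j\ge i$ the nonempty open $\pi_{ji}^{-1}(U_i)\subseteq(\mathcal{V}_j)_K$ meets $\mathcal{V}_j(\mathcal{O})$, you still need to produce a \emph{compatible} system of $\mathcal{O}$-points, i.e.\ an element of $\varprojlim_j\mathcal{V}_j(\mathcal{O})$ landing in $U_i$. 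This follows by saturation of the monster (any finite subsystem is dominated by a single index $j$), but you should say so. The paper's direct argument avoids this bookkeeping entirely; your approach has the advantage of only ever invoking Lemma~\ref{L:Iso_to_generic} in the finite type setting.
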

\begin{proof}
Since every open subscheme of $\mathcal{V}_K$ is Zariski dense and the given $\mathcal{O}_K$-point must land in some affine open subscheme of $\mathcal{V}$, we may reduce to the case where $\mathcal{V}$ is affine. 

Assume that $\mathcal{V}$ is affine. We will show that any basic open subscheme of $\mathcal{V}_K$ has an $\mathcal{O}$-point. Let $f$ be a regular function on $\mathcal{V}_K$ and assume it is defined over $K^\prime$ for some $K\prec K^\prime$. Since $\mathcal{V}$ has an $\mathcal{O}_K$ point so does the reduced induced subscheme $\mathcal{V}_{red}$ and consequently $(\mathcal{V}_{k_K})_{red}$ has a $k_K$-point. By Lemma \ref{L:Iso_to_generic} there exists $K^\prime\prec L$ and $b\in \mathcal{O}_L$ which is a $K^\prime$-generic of $\mathcal{V}_{K'}(K')$. Since $\mathcal{V}_K$ is irreducible (again, by \Cref{P:V irr then general}) and thus has a unique generic type, $f(b)\neq 0$ as required.  If $\mathcal{V}$ is of finite type over $\mathcal{O}_K$, there thus exists $b^\prime\in \mathcal{V}(\mathcal{O}_{K'})$ with $f(b)\neq 0$.
\end{proof}
%
%
%
%
%

\begin{theorem}\label{T:Surjective}
Let $\mathcal{V}$ be a quasi-compact separated irreducible scheme over $\mathcal{O}_K$. If $\mathcal{V}$ has an $\mathcal{O}_K$-point then \[r:\mathcal{V}(\mathcal{O})\to \mathcal{V}_{k}\] is surjective.

Moreover, if $\mathcal{V}$ is of finite type over $\mathcal{O}_K$ then 
\[r:\mathcal{V}(\mathcal{O}_K)\to \mathcal{V}_{k_K}(k_K)\] is surjective.
\end{theorem}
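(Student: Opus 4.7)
The plan is to follow the strategy of Proposition \ref{P:O-points-are-dense}, reducing to the affine case and then applying a dominating valuation ring argument as in Lemma \ref{L:Iso_to_generic}.

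First, given a $k$-point $c$ of $\mathcal{V}_k$, its image in $\mathcal{V}$ lies in some affine open subscheme $\mathcal{U}=\Spec A$ taken from a finite affine cover of the quasi-compact $\mathcal{V}$. Since $\mathcal{V}$ has an $\mathcal{O}_K$-point, it is faithfully flat over $\mathcal{O}_K$ by Proposition \ref{P:faith-flat}, so $\mathcal{U}$ is flat and $\mathcal{O}_K$ injects into $A$. As an open subscheme of an irreducible scheme, $\mathcal{U}$ is irreducible, and by Proposition \ref{P:V irr then general} so is $\mathcal{U}_K$.

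Next, the point $c$ corresponds to a maximal ideal $\mathfrak{m}$ of $A$ with $A/\mathfrak{m}$ a subfield of $k$; because $c$ is a $k$-point one has $\mathcal{M}_K A\subseteq \mathfrak{m}$, and maximality of $\mathcal{M}_K$ in $\mathcal{O}_K$ forces $\mathfrak{m}\cap \mathcal{O}_K=\mathcal{M}_K$. The local ring $A_\mathfrak{m}$ is then dominated by a valuation ring $\mathcal{O}_L\subseteq L$ (by \cite[Theorem 10.2]{matsumura}), giving an $\mathcal{O}_K$-algebra inclusion $A_\mathfrak{m}\subseteq \mathcal{O}_L$ with $\mathcal{M}_L\cap A_\mathfrak{m}=\mathfrak{m}A_\mathfrak{m}$. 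Pulling back yields $\mathcal{M}_L\cap \mathcal{O}_K=\mathcal{M}_K$, which forces $\mathcal{O}_L\cap K=\mathcal{O}_K$, since overrings of a valuation ring of $K$ correspond to its prime ideals and the only prime above $\mathcal{M}_K$ is $\mathcal{M}_K$ itself. After enlarging $L$ to its algebraic closure and extending the valuation, we may assume $L\models $ACVF, and the composition $A\hookrightarrow \mathcal{O}_L$ provides $b\in\mathcal{V}(\mathcal{O}_L)$ whose reduction $r(b)$ is $c$ viewed in $k_L$.

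The first assertion now follows by saturation of the monster $\mathbb{U}$: any $c\in\mathcal{V}_k$ is defined over a small parameter set $K_0\subseteq \mathbb{K}$, and the lifting model $L$ embeds into $\mathbb{U}$ over $K_0$. For the \emph{moreover}, when $\mathcal{V}$ is of finite type the set $\mathcal{V}(\mathcal{O})$ is genuinely $K$-definable, and for $c\in\mathcal{V}_{k_K}(k_K)$ the fibre $r^{-1}(c)$ is a $K$-definable subset shown non-empty in the extension $L$ by the above, hence non-empty in $K$ by model completeness of ACVF. The most delicate step is verifying that the dominating valuation ring really extends $\mathcal{O}_K$ cleanly, i.e.\ that $\mathcal{O}_L\cap K=\mathcal{O}_K$ rather than a coarsening—precisely what the $k$-point hypothesis together with flatness (itself guaranteed by the existence of an $\mathcal{O}_K$-point) makes possible.
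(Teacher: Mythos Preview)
Your argument is essentially the paper's argument, stripped of the detour through a $K$-generic point. Both proofs reduce to an affine open, produce a prime of $A$ lying over $\mathcal{M}_K$, and dominate the localization by a valuation ring via \cite[Theorem 10.2]{matsumura}. The paper packages the first two steps inside Lemma~\ref{L:Iso_to_generic} (using $\mathcal{O}_K[b]\cong A_{\mathrm{red}}$ for $b$ generic and then the evaluation-at-$a$ map to $k_K$), whereas you go straight to the ideal $\mathfrak{m}$ cut out by $c$. Your non-affine reduction is in fact slightly cleaner: you only need $\mathcal{U}$ to be flat, which follows from flatness of $\mathcal{V}$, while the paper invokes Proposition~\ref{P:O-points-are-dense} to place an $\mathcal{O}_K$-point inside $\mathcal{U}$ before applying Lemma~\ref{L:Iso_to_generic}.

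Two small points to tighten. First, $A$ need not be reduced, so $A_\mathfrak{m}$ is not a priori contained in a field; pass to $A_{\mathrm{red}}$ (or quotient by the unique minimal prime) before invoking the domination theorem, exactly as the paper does. Second, and more substantively, your $\mathfrak{m}$ is only guaranteed to be prime, and the $k$-point $c$ is the data of $\mathfrak{m}$ \emph{together with} an embedding $A/\mathfrak{m}\hookrightarrow k$. When you embed your abstractly constructed $L$ into the monster ``over $K_0$'', you must ensure the induced $k_L\hookrightarrow k$ is compatible with that specific embedding, else $r(b)$ and $c$ will share the same scheme-theoretic image but differ as rational points. The clean fix, which the paper uses, is to first base-change to a small model $K'$ with $c\in \mathcal{V}_{k_{K'}}(k_{K'})$; then $\mathfrak{m}$ is genuinely maximal with quotient $k_{K'}$, and any valued-field embedding $L\hookrightarrow\mathbb{U}$ over $K'$ automatically carries $r(b)$ to $c$.
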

\begin{proof}
Since $\mathcal{V}$ has an $\mathcal{O}_K$ point, $\mathcal{V}_{k_K}$ has a $k_K$-point. Every $\mathcal{O}$-point (resp. $k$-point) factors through $\mathcal{V}_{red}$ (resp. $(\mathcal{V}_{k})_{red}$) so we may assume that $\mathcal{V}$ and $\mathcal{V}_{k}$ are reduced. By Proposition \ref{P:faith-flat}, $\mathcal{V}\to \mathcal{O}_K$ is faithfully flat. We first assume that $\mathcal{V}$ is affine, say $\mathcal{V}=\Spec \mathcal{O}_K[\bar{X}]/I$, where $|\bar X|$ may be infinite but small.

Let $a\in \mathcal{V}_{k}$ be a $k$-point of $\mathcal{V}_{k}$. After base-changing, for simplicity, we may assume that $a\in \mathcal{V}_{k_K}(k_K)$. Let $b\in L$ be a $K$-generic of $\mathcal{V}_K(K)$ in some elementary extension $K\prec L$. By Lemma \ref{L:Iso_to_generic} we may assume that $b\in \mathcal{O}_L$.
The map \[\mathcal{O}_K[\bar{X}]/I\to \mathcal{O}_K[\bar{X}]/I\otimes_{\mathcal{O}_K} k_K\to k_K\] sending $f\mapsto \bar{f}\mapsto \bar{f}(a)$ extends the residue map $\res :\mathcal{O}_K\to k_K$. 
Since $\mathcal{O}_K[b]\cong \mathcal{O}_K[\bar{X}]/I$ (Lemma \ref{L:Iso_to_generic}), in some elementary extension an $\mathcal{O}$-point of $\mathcal{V}$ maps to $a$. If $\mathcal{V}$ is of finite type over $\mathcal{O}_K$, we can find $b^\prime\in \mathcal{V}(\mathcal{O}_K)$ such that $\res (b^\prime)=a$.

If $\mathcal{V}$ is not affine, the $k_K$-point of $\mathcal{V}_{k_K}$ must lie in some affine open \[\mathcal{U}_k:=\mathcal{U}\times_{\Spec \mathcal{O}_K}\Spec k\] for some $\mathcal{U}\subseteq \mathcal{V}$ affine open subscheme. Since $\mathcal{U}$ is also irreducible and flat over $\mathcal{O}_K$, by Lemma \ref{L:Iso_to_generic}, we may reduce to the affine case.
\end{proof}
%
%
%
%

\section{Stably Pointed Varieties}\label{S:gen stable var}
Let $F$ be a perfect Henselian valued field with valuation ring $\mathcal{O}_F$. In ACVF, it means that $F$ is definably closed ($\dcl_{VF}(F)=F$). Some of what follows may probably be done in a higher level of generality. Let $K\models$ACVF, usually a model containing $F$.

Let $V$ be a variety over $F$. As was said before, and hopefully without creating too much confusion, we will also denote by $V$ the definable set $V$ defines in the monster model.

In ACF the pair $(V,p)$ where $V$ is an affine variety over $K$ and $p$ is a Zariski dense type concentrated on $V$ is well understood. There is a unique such $p$, and, by assumption, it is the unique generic type of $V$. We would like to develop an analogous picture for ACVF. 

We first define a functor $\Phi_F$ from the pairs $(V,p)$ of an affine variety and a Zariski dense generically stable type over a definably closed field $F$ to schemes over $\mathcal{O}_F$,  \[(V,p)\mapsto \Phi_F(V,p).\] In general there is no connection between $\Phi_F$ and $\Phi_L$ for some valued field extension $F\subseteq L$.

In Proposition \ref{P:desc-between-models} we will show that if the pair $(V,p)$ is taken over a model, the choice of the model is immaterial, namely:
\[\Phi_K(V,p)\times_{\mathcal{O}_K}\mathcal{O}_L=\Phi_L(V_L,p)\] for any $K\prec L$ models of ACVF and thus they give the same definable set $\Phi_K(V,p)(\mathcal{O})$.
Combining with Corollary \ref{C:descent-for-alg} we get that the same happens for pairs $(V,p)$ over a definably closed defectless non-trivially valued field $F$ with perfect residue field for which $\Gamma_{F(c)}=\Gamma_F$ for $c\models p|F$, i.e. \[\Phi_F(V,p)\times_{\mathcal{O}_F}\mathcal{O}_L=\Phi_L(V_L,p)\] for any model $F\subseteq L$. 

In view of this discussion, if the base can be understood from the context or more importantly if the base is nice enough and we only care about model theoretic properties, e.g. properties of the definable sets of $\mathcal{O}$-points or $\mathbb{K}$-points, we may drop the subscript and denote this functor by $\Phi$, without specifying the base.

Finally  we study the functor for $(V,p)$ with $p$ strongly stably dominated (see \ref{ss:finiteness conditions}).

\subsection{Stably Pointed Varieties Exist}
The main objects of this section will be pairs $(V,p)$, where $V$ is an algebraic variety and $p$ is a Zariski dense generically stable type concentrated on it. The aim of this subsection is to prove that these pairs can be found in abundance.

\begin{example}
Let $K\models$ ACVF and let $p_{\mathcal{O}}$ be the generic type of the closed ball (see Example \ref{E:p_O-minimal-val}). It is Zariski dense in $\mathbb{A}^1_K$. Notice that although $\mathbb{A}^1_K$ is integral it does not have a unique Zariski dense generically stable type, for instance $ap_{\mathcal{O}}$ (the push-forward of $p_{\mathcal{O}}$ with respect to multiplication by $a$) for $a\in K^\times$ with $\val (a)>0$ is Zariski dense, generically stable and not equal to $p_{\mathcal{O}}$.
\end{example}

\begin{proposition}
For any valued field $F$ and variety $V$ over $F$ there exists a Zariski dense generically stable type concentrated on $V$.
\end{proposition}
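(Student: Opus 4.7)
The plan is to reduce to the affine case and, via Noether normalization, lift the generic type of a polydisc in $\mathbb{A}^d$ to the variety.

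First, let $U\subseteq V$ be any non-empty affine open. Since $V$ is geometrically integral, $U$ is Zariski dense in $V$, so it suffices to produce a Zariski dense generically stable type concentrated on $U$. Applying Noether normalization, one obtains a finite surjective $F$-morphism $\pi\colon U\to\mathbb{A}^d_F$ where $d=\dim U$. Let $p_0:=p_{\mathcal{O}}^{\otimes d}$ denote the generic type of the polydisc $\mathcal{O}^d\subseteq\mathbb{A}^d$; it is generically stable by Example~\ref{E:p_O-minimal-val} combined with stability of generic stability under products.

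Realize $b\models p_0|\mathbb{U}$ in a suitably saturated extension of the monster model, extend the valuation on $\mathbb{U}(b)$ to an algebraic closure, and use surjectivity of $\pi$ to pick $c\in U(\mathbb{U}(b)^{\mathrm{alg}})$ with $\pi(c)=b$. Set $q:=\operatorname{tp}(c/\mathbb{U})$; this is the candidate. Zariski density is the quickest point to verify: any proper closed subvariety $W\subsetneq U$ maps via $\pi$ to a proper closed subset of $\mathbb{A}^d$ (by finiteness of $\pi$ and equality of dimensions), so $b\notin\pi(W)$ forces $c\notin W$. Orthogonality of $q$ to $\Gamma$ is a value-group computation: $c$ is field-algebraic over $\mathbb{U}(b)$, hence $\Gamma_{\mathbb{U}(c)}\subseteq\mathbb{Q}\cdot\Gamma_{\mathbb{U}(b)}=\mathbb{Q}\cdot\Gamma_{\mathbb{U}}=\Gamma_{\mathbb{U}}$, using orthogonality of $p_0$ to $\Gamma$ and divisibility of $\Gamma_{\mathbb{U}}$. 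Once definability of $q$ is established, the equivalence ``generically stable $\Leftrightarrow$ orthogonal to $\Gamma$'' for definable ACVF-types yields generic stability.

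The main obstacle is indeed the definability of $q$. Because the preimage $c\in\pi^{-1}(b)$ is not canonical among the finitely many points of the fiber, one must argue that any such choice produces a $\mathbb{U}$-definable type. The cleanest route is to replace $\pi$ by a finite Galois cover so that all preimages of $b$ become Galois-conjugate over $\mathbb{U}(b)$ and therefore share their $\mathbb{U}$-type; definability of $q$ then reduces to definability of $p_0$ together with the algebraic datum pinning down $c$. Equivalently, one may invoke a general principle, a direct generalization of the pushforward property recorded after the equivalence recalled above, asserting that any type algebraic over a generically stable type is itself generically stable.
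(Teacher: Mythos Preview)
Your approach is the same as the paper's: Noether normalization followed by lifting $p_{\mathcal{O}}^d$ along the resulting finite map. (You are in fact slightly more careful than the paper in first passing to an affine open before invoking Noether normalization.) The paper handles the two points you flag---definability and generic stability of the lift---by direct citation: the lift is $\acl(F)$-definable by \cite[Lemma~2.3.4]{Non-Arch-Tame} because the map is quasi-finite, and is then generically stable (indeed strongly stably dominated) by \cite[Proposition~8.1.2]{Non-Arch-Tame}. This is precisely the ``general principle'' you invoke in your final sentence, so your fallback is exactly what the paper does.

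Your attempt to argue definability directly via a Galois cover, however, has a real gap. Passing to a Galois cover $\tilde U\to\mathbb{A}^d$ does make the preimages of $b$ field-theoretically Galois-conjugate over $\mathbb{U}(b)$, but this does \emph{not} force them to share their ACVF-type over $\mathbb{U}$. The field $\mathbb{U}(b)$ is not Henselian: already for $d=1$ and any $m\in\mathbb{U}$ with $\val(m)>0$, the polynomial $X^2-(1+mb)$ has the simple residual root $1$ but no root in $\mathbb{U}(b)$. Hence the Galois group of $\mathbb{U}(b)^{\mathrm{alg}}/\mathbb{U}(b)$ need not preserve the valuation you chose when you ``extended the valuation on $\mathbb{U}(b)$ to an algebraic closure''. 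Concretely, take the degree-$3$ Galois cover $c\mapsto (c^3-1)/m$ of $\mathbb{A}^1$: the three conjugates $c,\omega c,\omega^2 c$ of a preimage of $b\models p_{\mathcal{O}}$ have residues $1,\omega,\omega^2\in k_{\mathbb{U}}$ and are therefore separated by $\mathbb{U}$-formulas. So the inference ``Galois-conjugate over $\mathbb{U}(b)$ and therefore share their $\mathbb{U}$-type'' fails, and with it the proposed reduction of definability of $q$ to that of $p_0$. The conclusion is still correct, but one really does need the cited lemma (or an equivalent argument tracking the finitely many $\acl$-conjugate extensions of $p_0$) rather than the Galois-cover shortcut.
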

\begin{proof}
By the Noether normalization lemma there exists a surjective finite dominant morphism $f:V\to \mathbb{A}^d_F$ for some $d$. Since $V$ is geometrically irreducible and $f$ is dominant, there exists a Zariski dense global type on $p$ on $V$ such that $f_*p=p_\mathcal{O}^d$. It is $\acl(F)$-definable since $f$ is quasi-finite, see \cite[Lemma 2.3.4]{Non-Arch-Tame}. The type $p$ is generically stable (in fact, strongly stably dominated) by \cite[Proposition 8.1.2]{Non-Arch-Tame}.
\end{proof}

Even more can be said if we start with a scheme of finite type over $\mathcal{O}_F$. We will need to recall the following:

\begin{fact}\cite[Tag 0B2J]{stacks-project}\label{F:dim-generic=special}
Let $f:X\to \Spec R$ be a morphism from an irreducible scheme to the spectrum of a valuation ring. If f is locally of finite type and surjective, then the special fibre is equidimensional of dimension equal to the dimension of the generic fibre.
\end{fact}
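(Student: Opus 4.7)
The strategy is a two-step reduction: first upgrade the hypotheses to the flat, integral case; then invoke the standard fact that for a flat morphism of finite presentation, the function $x\mapsto\dim_x X_{f(x)}$ is locally constant on the source.

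First I would replace $X$ by $X_{\mathrm{red}}$. Dimensions of fibres depend only on the underlying topological space, so there is no loss of generality in assuming $X$ is integral (irreducible is given; reducedness is the only new information we need).

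Next I would prove that $f$ is flat, by an affine-local argument that is essentially the one used already in the proof of Proposition \ref{P:faith-flat}. Since $f$ is surjective and both $X$ and $\Spec R$ are irreducible, the generic point $\eta_X$ of $X$ maps to the generic point $\eta_R$ of $\Spec R$. Consequently, every affine open $U=\Spec A\subseteq X$ contains $\eta_X$ and dominates $\Spec R$, so the structure map $R\to A$ is injective. Since $A$ is a domain containing $R$, it is a torsion-free $R$-module, hence flat by Fact \ref{F:tors-free}. Therefore $f$ is flat, and by Fact \ref{F:f.t-f.p} it is locally of finite presentation.

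Third, I would apply the standard theorem that, for a flat morphism $f\colon X\to Y$ that is locally of finite presentation, the function $\delta\colon X\to\mathbb{Z}_{\geq 0}$, $x\mapsto \dim_x X_{f(x)}$, is locally constant (this is the non-Noetherian form given in the Stacks Project; the finite presentation hypothesis is exactly what the previous step produced). Since $X$ is irreducible, hence connected, $\delta$ is a global constant $c$. The generic fibre $X_\eta$ is an open subscheme of the integral scheme $X$, so it is integral and $\dim X_\eta = c$. The special fibre $X_s$ satisfies $\dim_x X_s = c$ for every $x\in X_s$, which is precisely the assertion that $X_s$ is equidimensional of dimension $c=\dim X_\eta$.

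\textbf{Main obstacle.} The only non-formal input is the local constancy of fibre dimension for flat, locally finitely presented morphisms. Over a general, possibly non-Noetherian, valuation ring one cannot quote the Noetherian statement; the crucial point is that Fact \ref{F:f.t-f.p} automatically upgrades ``flat + locally of finite type'' to ``flat + locally of finite presentation'', after which the non-Noetherian version of the dimension-formula applies. Once that is in hand, irreducibility of $X$ carries the equality from the generic to the special fibre essentially for free.
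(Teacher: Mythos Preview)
The paper does not prove this statement; it is recorded as a Fact with a bare citation to the Stacks Project (Tag 0B2J), so there is no in-paper argument to compare against.

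Your argument is correct and is essentially the standard one. The reduction to $X$ integral is harmless; the flatness step is exactly the content of the Claim in Proposition~\ref{P:faith-flat} (dominance on each affine open forces $R\hookrightarrow A$, and a domain over a valuation ring is torsion-free, hence flat by Fact~\ref{F:tors-free}); Fact~\ref{F:f.t-f.p} then upgrades ``locally of finite type'' to ``locally of finite presentation'', which is precisely what is needed to invoke the non-Noetherian local constancy of $x\mapsto\dim_x X_{f(x)}$. Connectedness of $X$ finishes the job. One cosmetic remark: your argument actually shows that \emph{every} fibre of $f$, not only the special one, is equidimensional of the common dimension, which is slightly more than the stated Fact but costs nothing extra.
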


\begin{proposition}\cite[Essentially in Proposition 8.1.2]{Non-Arch-Tame}\label{P:actually stab.dom}
Let $\mathcal{V}$ be an integral separated scheme over $\mathcal{O}_F$ and $p$ a generically stable type concentrated on $\mathcal{V}(\mathcal{O})$ definable over an algebraically closed valued field $F\subseteq K$. If $\dim \mathcal{V}_K=\dim\mathcal{V}_k$ and $r_*p$ is a generic type of $\mathcal{V}_k$, then  $p$ is stably dominated by $r_*p$ via $r:\mathcal{V}(\mathcal{O})\to \mathcal{V}_k$.

In particular, the result holds if we replace the assumption $\dim \mathcal{V}_K=\dim\mathcal{V}_k$  with $\mathcal{V}$ being of finite type over $\mathcal{O}_F$.
\end{proposition}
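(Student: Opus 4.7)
The plan is to reduce to the affine case, use Lemma \ref{L:Iso_to_generic} to identify the coordinate ring of $\mathcal{V}$ with the $\mathcal{O}$-subalgebra generated by a realization of $p$, and then verify stable domination directly by controlling the value group via Abhyankar's inequality.

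For the reduction, since $r_*p$ is a generic type of $\mathcal{V}_k$ it concentrates on the generic point of some irreducible component $W\subseteq\mathcal{V}_k$; choose an affine open $\mathcal{U}\subseteq \mathcal{V}$ with $\mathcal{U}_k\cap W\neq\emptyset$. For any $c\models p$ lying in $\mathcal{V}(\mathcal{O}_L)$ we have $r(c)\in \mathcal{U}_k$, and since $\Spec\mathcal{O}_L$ has a unique closed point, the morphism $\Spec\mathcal{O}_L\to \mathcal{V}$ factors through $\mathcal{U}$. Thus $p$ concentrates on $\mathcal{U}(\mathcal{O})$ and I replace $\mathcal{V}$ by $\mathcal{U}$ to assume $\mathcal{V}=\Spec A$ is affine. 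Note $\mathcal{V}$ has an $\mathcal{O}_F$-point since $\mathcal{V}(\mathcal{O})$ is a nonempty pro-definable set over $F$, hence has an $F$-point by elementarity. Any $c\models p|F$ is an $F$-generic of $\mathcal{V}_F$ by Zariski density, so Lemma \ref{L:Iso_to_generic} gives $A/\mathrm{Nil}(A)\cong \mathcal{O}_F[c]$; reducing modulo the maximal ideal yields $\trdeg(F(c)/F)=\dim\mathcal{V}_F$ and $\trdeg(k_F(r(c))/k_F)=\dim W=\dim\mathcal{V}_k$, and these agree by hypothesis.

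Abhyankar's inequality applied to the valued field extension $F\subseteq F(c)$ then forces $\Gamma_{F(c)}=\Gamma_F$, using divisibility of $\Gamma_F$. To establish stable domination of $p$ by $r_*p$ via $r$, let $B=\acl(B)\supseteq F$, take $c\models p|F$ with $r(c)\models r_*p|B$, and pick $c'\models p|B$ in $\mathbb{U}$; then $r(c')\models r_*p|B$, so $r(c)\equiv_B r(c')$ as both realize the same complete type. Since $r(c)$ is $B$-generic on $W$, a second application of Abhyankar's inequality to $B\subseteq B(c)$ forces $c$ to be $B$-generic of $\mathcal{V}_B$ with $\Gamma_{B(c)}=\Gamma_B$, and similarly for $c'$. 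The $\mathcal{O}_F$-algebra isomorphism $\mathcal{O}_F[c]\cong A/\mathrm{Nil}(A)\cong \mathcal{O}_F[c']$ respects the residue maps; combined with the absence of new value-group elements and quantifier elimination in the geometric sorts, this lifts the residue-level equivalence $r(c)\equiv_B r(c')$ to $c\equiv_B c'$, giving $c\models p|B$.

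The main obstacle will be this final lifting step: one must upgrade a $B$-elementary equivalence in the residue sort to one of valued-field elements. The Abhyankar computations are what rule out the only potential obstruction, which would lie in the value group. The ``in particular'' clause is immediate: for $\mathcal{V}$ of finite type the existence of an $\mathcal{O}_F$-point (obtained from any realization of $p|F$) makes $\mathcal{V}\to\Spec\mathcal{O}_F$ surjective by Proposition \ref{P:faith-flat}, and Fact \ref{F:dim-generic=special} then supplies the dimension equality $\dim\mathcal{V}_F=\dim\mathcal{V}_k$.
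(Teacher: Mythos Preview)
Your lifting step is a genuine gap, and it is precisely where you have hidden the whole difficulty. You claim that $r(c)\equiv_B r(c')$ together with $\Gamma_{B(c)}=\Gamma_B=\Gamma_{B(c')}$ and ``quantifier elimination in the geometric sorts'' forces $c\equiv_B c'$. But quantifier elimination only tells you that $tp(c/B)$ is determined by the truth values of conditions $\val(f(c))\le\val(g(c))$ for $f,g$ polynomials over $B$; knowing that $\val(f(c))\in\Gamma_B$ does not tell you \emph{which} element of $\Gamma_B$ it is, and knowing $\res(c)\equiv_B\res(c')$ does not by itself pin down these valuations. What you are implicitly invoking is the content of Fact~\ref{F:strong} (that $\trdeg(K(a)/K)=\trdeg(k_{K(a)}/k_K)$ over a model $K$ forces $tp(a/K)$ to be stably dominated with a unique definable extension), but that fact requires the base to be a model, not merely $\acl$-closed, and in any case you have not proved it. You are effectively re-deriving stable domination from scratch.

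The paper's route avoids this entirely by using the hypothesis that $p$ is generically stable. In ACVF this already means $p$ is stably dominated via \emph{some} $K$-definable map $h$ into a power of the residue field. The whole argument then reduces to a one-line transcendence-degree count: since $h(a),r(a)\in k_{K(a)}$ and the Abhyankar inequality (together with $\Gamma(Ka)=\Gamma(K)$, which holds because $p$ is orthogonal to $\Gamma$) gives $\trdeg(k_{K(a)}/k_K)\le\trdeg(K(a)/K)=\dim\mathcal{V}_K=\dim\mathcal{V}_k=\trdeg(k_K(r(a))/k_K)$, one gets $h(a)\in\acl(K,r(a))$. Now if $r(a)\models r_*p|B$ then, working entirely in the stable residue sort, $h(a)\models h_*p|B$ as well, and domination via $h$ gives $a\models p|B$. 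No lifting from $k$ to the valued field is needed; the pre-existing domination via $h$ does that work. Your argument never exploits the generic stability of $p$ beyond orthogonality to $\Gamma$, which is why you are left re-proving the hard part. (A smaller issue: you repeatedly work over $F$ and invoke elementarity and Lemma~\ref{L:Iso_to_generic} there, but $F$ is not assumed to be a model; the argument should be run over $K$.)
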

\begin{remark}
Since $\mathcal{V}(\mathcal{O})\neq \emptyset$, if $\mathcal{V}_{\mathcal{O}_L}$ is of finite type over $\mathcal{O}_L$ then it is flat over $\mathcal{O}_L$, for any algebraically closed valued field $L$ containing $F$, by Proposition \ref{P:faith-flat}.
\end{remark}
\begin{proof}
Assume that $p$ is stably dominated by a $K$-definable function $h$ and let $a\models p|K$. By \cite[Section 7.5]{StableDomination}, every stable stably embedded set is definably isomorphic to a definable subset of the residue sort, so we may assume that $h$ is a definable function into a power of the residue field. 

Since $\Gamma(Ka)=\Gamma(K)$, by the Abhyankar inequality, 
\[\trdeg (k_{K(a)}/k_K)\leq \trdeg (K(a)/K).\]
On the other hand, since for every model $K\prec L$, and $b\in \mathcal{V}(\mathcal{O}_L)$, $r(b),h(b)\in \mathcal{V}_k(k_L)$, it follows that $r(a),h(a)\in k_{K(a)^{alg}}$ and thus\[\trdeg (k_K(r(a))/k_K)\leq \trdeg (k_{K(a)^{alg}}/k_K)=\trdeg (k_{K(a)}/k_K)\leq \trdeg (K(a)/K).\] 
Consequently, since $r_*p$ is generic and $\dim \mathcal{V}_K=\dim \mathcal{V}_k$, $h(a)$ is algebraic over $k_K(r(a))$. So if $a\models p|K$ and $r(a)\models r_*p|B$ for some $K\subseteq B$ then also $h(a)\models h_*p|B$ hence $a\models p|B$.

The in particular follows using Fact \ref{F:dim-generic=special}.
\end{proof}

Using Theorem \ref{T:Surjective} we will now show that, over models, generically stable types which are stably dominated by $r:\mathcal{V}(\mathcal{O})\to \mathcal{V}_k$ exist.

\begin{fact}\cite[Paragraph after Lemma 10.2]{StableDomination}\label{F:strong}
Let $K$ be a non-trivially valued algebraically closed field. If\[\trdeg (K(a)/K)=\trdeg (k_{K(a)}/k_K),\]
then $tp(a/K)$ is stably dominated and has a unique $K$-definable global extension.
\end{fact}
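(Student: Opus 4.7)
The plan is to reduce the claim to the known stable domination of the generic type $p_\mathcal{O}^n$ of Example \ref{E:p_O-minimal-val} by locating, inside $K(a)$, a tuple of $\mathcal{O}$-elements whose residues exhaust the residue-field extension. The hypothesis $\trdeg(K(a)/K) = \trdeg(k_{K(a)}/k_K)$ is exactly what closes the Abhyankar inequality, which is the opening move.

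First, I would invoke Abhyankar's inequality
\[
\trdeg(K(a)/K)\ \geq\ \trdeg(k_{K(a)}/k_K) + \dim_\mathbb{Q}\!\bigl((\Gamma_{K(a)}/\Gamma_K)\otimes_\mathbb{Z}\mathbb{Q}\bigr),
\]
so under the hypothesis $\Gamma_{K(a)}/\Gamma_K$ is torsion; since $\Gamma_K$ is divisible and $\Gamma_{K(a)}$ torsion-free (being ordered), this forces $\Gamma_{K(a)} = \Gamma_K$. Set $n = \trdeg(k_{K(a)}/k_K)$ and pick $b_1,\dots,b_n \in \mathcal{O}_{K(a)}$ whose residues form a transcendence basis of $k_{K(a)}$ over $k_K$. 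For any $f = \sum c_I \bar X^I \in K[\bar X]$, dividing by a coefficient of minimal valuation yields a polynomial over $\mathcal{O}_K$ whose reduction is a nonzero polynomial in $k_K[\bar X]$; since $\res(b_1),\dots,\res(b_n)$ are algebraically independent over $k_K$, this reduction is nonzero at $(\res(b_i))$, whence $\val(f(\bar b)) = \min_I \val(c_I)$. By Example \ref{E:p_O-minimal-val}, this identifies $\operatorname{tp}(\bar b/K)$ with $p_\mathcal{O}^n|K$. In particular $\bar b$ is algebraically independent over $K$, and since $K(\bar b)\subseteq K(a)$ with matching transcendence degree $n$, we get $a \in K(\bar b)^{\mathrm{alg}}$.

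Now I would transfer stable domination: since $p_\mathcal{O}^n$ is stably dominated via $\res\colon \mathcal{O}^n \to k^n$, for any $B \supseteq K$ and any realization $\bar b' \models p_\mathcal{O}^n|K$, having $\res(\bar b')$ generic in $k^n$ over $B$ forces $\bar b' \models p_\mathcal{O}^n|B$. Realizing $\bar b$ as a $K$-definable function of $a$ and considering the minimal polynomial of $a$ over $K(\bar b)$, the type $\operatorname{tp}(a/K\bar b)$ is algebraic and $a$ is pinned to one specific root; hence $\operatorname{tp}(a/B)$ is determined by $\operatorname{tp}(\bar b/B)$, which is determined by the residue. This is precisely stable domination of $\operatorname{tp}(a/K)$ via the composite $a \mapsto \res(\bar b)$. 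Uniqueness of the $K$-definable global extension then follows automatically: the pushforward $\res_*\operatorname{tp}(\bar b/K)$ is the generic type of $k^n$ over the algebraically closed field $k_K$, which has a unique global extension by stability of $\mathrm{ACF}$, and stable domination lifts this uniqueness back to $\operatorname{tp}(a/K)$.

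The most delicate point is the algebraic jump in the third paragraph. One must verify that passing from $\bar b$ to $a \in \acl(K\bar b)$ does not break the stable-domination diagram, i.e.\ that choosing the "same" root over any extension $B$ is consistent. The key is that $a$ is a \emph{fixed} element and $\operatorname{tp}(a/K\bar b)$ is isolated by its minimal polynomial together with the inclusion $\bar b \in K(a)$, so $\operatorname{tp}(a/B\bar b)$ is pinned down once $\operatorname{tp}(\bar b/B)$ is, and no Galois ambiguity enters. If desired, one can bypass this verification by instead checking orthogonality to $\Gamma$: the equality $\Gamma_{K(a)} = \Gamma_K$ plus a similar argument over extensions yields orthogonality, and then the equivalence stably dominated $\Leftrightarrow$ orthogonal to $\Gamma$ recorded in the preliminaries closes the argument.
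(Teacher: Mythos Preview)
The paper does not give a proof of this statement: it is recorded as a \textbf{Fact} and attributed directly to \cite[Lemma 10.2]{StableDomination}, with no argument in the paper itself. So there is nothing to compare your proposal against here.

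That said, your outline follows the expected shape of the argument from the cited source: use Abhyankar to kill the value-group contribution, lift a residue transcendence basis to elements $\bar b$ of $\mathcal{O}_{K(a)}$, recognize $\operatorname{tp}(\bar b/K)$ as $p_{\mathcal{O}}^n|K$ via the Gauss-valuation computation, and then pass from $\bar b$ to the interalgebraic $a$. The one place to be careful is the last step, as you yourself flag: the passage from stable domination of $\operatorname{tp}(\bar b/K)$ to stable domination and \emph{unique} $K$-definable extendibility of $\operatorname{tp}(a/K)$ is not just a matter of $a\in\acl(K\bar b)$ in general NIP, and in \cite{StableDomination} this is handled using the specific machinery developed there (stationarity of stably dominated types over $\acl$-closed bases, and the behaviour of stable domination under algebraic extensions). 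Your suggested fallback via orthogonality to $\Gamma$ does not quite close the gap either, since in the paper's formulation orthogonality to $\Gamma$ is a property of an already definable type, so existence of a $K$-definable global extension must be secured first. In short: the strategy is right, but the final transfer step is where the actual content of \cite[Chapter 10]{StableDomination} is being invoked rather than reproved.
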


\begin{lemma}\cite[Lemma 6.7]{Metastable}\label{L:6.7fromMeta}
Let $K$ be a non-trivially valued algebraically closed field and $\mathcal{V}$ an integral separated scheme of finite type over $\mathcal{O}_K$. Every type $p=tp(a/K)$ which is 
\begin{enumerate}
\item concentrated on $\mathcal{V}(\mathcal{O})$ and for which
\item $r_*p$ is a generic type of $\mathcal{V}_k$ 
\end{enumerate}
is stably dominated, via $r:\mathcal{V}(\mathcal{O})\to \mathcal{V}_k$, and has a unique $K$-definable global extension. 

Moreover, every definable global type $q$ for which $(1)$ and $(2)$ hold, is $K$-definable.
\end{lemma}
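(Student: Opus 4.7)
The plan is to verify the transcendence-degree hypothesis of Fact~\ref{F:strong}, which supplies stable domination and a unique $K$-definable global extension $\tilde p$ of $p$, to identify $\tilde p$ via Proposition~\ref{P:actually stab.dom} as stably dominated by $r_*\tilde p$ through $r$, and then to use this in the moreover clause.

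For a realization $a\models p$, Zariski density gives $\trdeg(K(a)/K)=\dim\mathcal{V}_K$, and since $r_*p$ is a generic type of $\mathcal{V}_k$, $\trdeg(k_K(r(a))/k_K)=\dim\mathcal{V}_k$. Because $\mathcal{V}$ is of finite type and acquires an $\mathcal{O}$-point after base-changing to a model $L\succ K$ in which $a$ lives, Proposition~\ref{P:faith-flat} applied to $\mathcal{V}_{\mathcal{O}_L}$ gives faithful flatness, and Fact~\ref{F:dim-generic=special} then yields $\dim\mathcal{V}_K=\dim\mathcal{V}_k$. Combined with the Abhyankar chain
\[
\trdeg(K(a)/K)\geq\trdeg(k_{K(a)}/k_K)\geq\trdeg(k_K(r(a))/k_K),
\]
this forces equality throughout, so $\trdeg(K(a)/K)=\trdeg(k_{K(a)}/k_K)$. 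Fact~\ref{F:strong} then produces the unique $K$-definable global extension $\tilde p$; Zariski density and concentration on $\mathcal{V}(\mathcal{O})$ transfer from $p$, and $r_*\tilde p$, being a Zariski dense global extension of the generic $r_*p$, must be the generic of the relevant irreducible component of $\mathcal{V}_k$. Proposition~\ref{P:actually stab.dom} (in its ``in particular'' form, using $\mathcal{V}$ of finite type) then gives that $\tilde p$ is stably dominated via $r$.

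For the moreover clause, let $q$ be a Zariski dense definable global type satisfying (1)--(2), pick any $a\models q|K$, and set $p_0:=tp(a/K)$. Applying the main part produces the unique $K$-definable global extension $\tilde{p_0}$ of $p_0$, stably dominated via $r$; the goal is $q=\tilde{p_0}$. In ACF over the algebraically closed base $k_K$, the global generic of an irreducible variety is unique, so the Zariski dense global extensions $r_*q$ and $r_*\tilde{p_0}$ of $r_*p_0$ must coincide. Then for any $B\supseteq K$ and $c\models q|B$, one has $c\models q|K=\tilde{p_0}|K$ and $r(c)\models r_*q|B=r_*\tilde{p_0}|B$; stable domination of $\tilde{p_0}$ via $r$ yields $c\models\tilde{p_0}|B$, and by completeness $q|B=\tilde{p_0}|B$, hence $q=\tilde{p_0}$ is $K$-definable.

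The main obstacle is the moreover clause: promoting ``definable'' to ``$K$-definable'' without first knowing $q$ is stably dominated. The key observation is that $r_*q$ lands in the stable residue sort, where the Zariski dense global extension of a generic type over an algebraically closed base is uniquely determined, so matching $r_*q$ with $r_*\tilde{p_0}$ lets stable domination via $r$ transfer uniqueness back to $q$ itself.
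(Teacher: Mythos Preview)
Your proposal is correct and follows essentially the same approach as the paper: establish the transcendence-degree equality via Fact~\ref{F:dim-generic=special} and the Abhyankar inequality, invoke Fact~\ref{F:strong}, then apply Proposition~\ref{P:actually stab.dom}; for the moreover clause, compare $q$ with the $K$-definable extension $\tilde{p_0}$ of $q|K$ and use stable domination via $r$ to force equality. You are in fact more explicit than the paper in two places: you justify surjectivity of $\mathcal{V}\to\Spec\mathcal{O}_K$ (needed for Fact~\ref{F:dim-generic=special}) by base-changing to a model containing $a$, and you spell out why $r_*q=r_*\tilde{p_0}$ (uniqueness of the global generic in ACF over an algebraically closed base), which the paper uses tacitly when it writes ``$r(a)\models r_*q|L$, by stable domination $a\models\tilde q|L$''. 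One small point worth tightening: the middle inequality $\trdeg(k_{K(a)}/k_K)\geq\trdeg(k_K(r(a))/k_K)$ relies on $r(a)$ lying in (an algebraic extension of) $k_{K(a)}$, which the paper justifies explicitly; you should note this as well.
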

\begin{proof}
Let $n:=\dim \mathcal{V}_K$. By Fact \ref{F:dim-generic=special}, $n=\dim \mathcal{V}_K=\dim \mathcal{V}_k$. 
 In general we have,
\[\trdeg (k_{K(a)}/k_K))+ \dim_\mathbb{Q}(\Gamma_{K(a)}/ \Gamma_K)\leq \trdeg (K(a)/K).\]
Since for every model $K\prec L$, and $b\in \mathcal{V}(\mathcal{O}_L)$, $r(b)\in \mathcal{V}_k(k_L)$, it follows that $r(a)\in k_{K(a)^{alg}}$ and so 
\[n=\trdeg (k_K(r(a))/k_K)\leq \trdeg (k_{K(a)^{alg}}/k_K)=\trdeg (k_{K(a)}/k_K).\]
Since
\[\trdeg (K(a)/K)\leq n,\]
$p=tp(a/K)$ is stably dominated and has a unique $K$-definable global extension by Fact \ref{F:strong}. The result follows using Proposition \ref{P:actually stab.dom}.

As for the moreover part, let $q$ be a definable global type as in the statement and assume it is definable over $K\subseteq L$, with $L$ an algebraically closed valued field. By the above, $q|K$ has a unique $K$-definable global extension, $\tilde{q}$. Let $a\models q|L$. Since $\tilde{q}$ is stably dominated over $K$ by $r_*q=r_*\tilde{q}$ via $r$ and $r(a)\models r_*q|L$, by stable domination $a\models \tilde{q}|L$. Thus $q$ and $\tilde{q}$ are both $L$-definable extensions of $q|L$, so by uniqueness they are equal and therefore $q$ is $K$-definable.  
\end{proof}

\begin{proposition}\label{P:existence of gen stable generic}
Let $K$ be a non-trivially valued algebraically closed field and let $\mathcal{V}$ be an integral separated scheme of finite type over $\mathcal{O}_K$. If $\mathcal{V}$ has an $\mathcal{O}_K$-point then there exists a $K$-definable type $p$ which is concentrated on $\mathcal{V}(\mathcal{O})$ and is stably dominated by $r_*p$ via the map $r:\mathcal{V}(\mathcal{O})\to \mathcal{V}_k$ where $r_*p$ is a generic type of $\mathcal{V}_k$.
\end{proposition}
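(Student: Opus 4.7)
The plan is to construct, in a suitable elementary extension, an $\mathcal{O}$-point of $\mathcal{V}$ whose residue is generic in $\mathcal{V}_k$ and whose type over $K$ is Zariski dense, and then invoke Lemma \ref{L:6.7fromMeta} to produce the desired $K$-definable global type.

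First, set $n := \dim \mathcal{V}_K$. Since $\mathcal{V}$ has an $\mathcal{O}_K$-point, the structure morphism $\mathcal{V}\to \Spec \mathcal{O}_K$ is surjective, so by Fact \ref{F:dim-generic=special} the special fibre $\mathcal{V}_k$ is equidimensional of dimension $n$. Let $q$ be the generic type of some irreducible component of $\mathcal{V}_{k_K}$, and realize it in a sufficiently saturated elementary extension $K \prec L$ by an element $\alpha \in \mathcal{V}_{k_L}(k_L)$. Note that $\mathcal{V}_{\mathcal{O}_L}$ is still an irreducible separated scheme of finite type over $\mathcal{O}_L$ with an $\mathcal{O}_L$-point, so Theorem \ref{T:Surjective} applies and yields surjectivity of $r : \mathcal{V}(\mathcal{O}_L) \to \mathcal{V}_{k_L}(k_L)$. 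Lift $\alpha$ along $r$ to some $a \in \mathcal{V}(\mathcal{O}_L)$.

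Next, I verify that $tp(a/K)$ is Zariski dense in $\mathcal{V}_K$. By the Abhyankar inequality,
\[
\trdeg(k_{K(a)}/k_K) + \dim_{\mathbb{Q}}(\Gamma_{K(a)}/\Gamma_K) \leq \trdeg(K(a)/K).
\]
Since $r(a) = \alpha \in k_{K(a)}$ is the generic of a component of $\mathcal{V}_{k_K}$ of dimension $n$, the left-hand side is at least $n$. On the other hand, $a$ lies in $\mathcal{V}_K(L)$, so $\trdeg(K(a)/K) \leq \dim \mathcal{V}_K = n$. Therefore equality holds, $tp(a/K)$ achieves the full transcendence degree, and is not concentrated on any proper closed subscheme of $\mathcal{V}_K$; this is Zariski density.

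Finally, the type $p_0 := tp(a/K)$ satisfies the hypotheses of Lemma \ref{L:6.7fromMeta}: it is Zariski dense, concentrated on $\mathcal{V}(\mathcal{O})$, and $r_*p_0 = q$ is a generic type of $\mathcal{V}_k$. The lemma then furnishes a unique $K$-definable global extension $p$, which is stably dominated via $r$. By construction $p$ remains Zariski dense and concentrated on $\mathcal{V}(\mathcal{O})$, and $r_* p$ is the unique $k_K$-definable global extension of $q$, hence a generic type of $\mathcal{V}_k$. Stable domination of $p$ by $r_* p$ via $r$ follows from Proposition \ref{P:actually stab.dom}. The only genuine obstacle is producing some $\mathcal{O}$-point whose residue is generic and whose type achieves the right transcendence degree, and this is precisely what Theorem \ref{T:Surjective} combined with the Abhyankar inequality delivers.
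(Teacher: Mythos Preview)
Your argument is correct and follows exactly the paper's approach, whose proof is the single line ``This is a consequence of Theorem \ref{T:Surjective} and Lemma \ref{L:6.7fromMeta}''; you have simply spelled out the details. One cosmetic remark: rather than base-changing to $\mathcal{O}_L$ (where irreducibility of $\mathcal{V}_{\mathcal{O}_L}$ would require a short justification), you can invoke Theorem \ref{T:Surjective} directly in the monster model, and note that strictly one only has $r(a)\in k_{K(a)^{\mathrm{alg}}}$, which is all that is needed for the transcendence-degree count.
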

\begin{proof}
This is a consequence of \Cref{T:Surjective} and Lemma \ref{L:6.7fromMeta}.
\end{proof}

\subsection{The Functor $\Phi_F$ For Affine Varieties}\label{ss:functor affine}
Let $F$ be a definably closed valued field. We will study the pair $(V,p)$, with $V$ affine, by defining a functor to schemes over $\mathcal{O}_F$.

\begin{definition}
We will say that a generically stable $F$-definable type is \emph{strictly based on $F$} if $\Gamma_{F(c)}=\Gamma_F$ for $c\models p|F$. 
\end{definition}
Notice that if $F$ has a divisible value group then every such type is strictly based on $F$.

\begin{definition}\label{D:the-category_OverF}
Denote by $\aGVar$ ("SPVar" for Stably Pointed Varieties) the category of pairs $(V,p)$ where $V$ is an affine variety over $F$ and $p$ is a Zariski dense generically stable type on $V$ definable over and strictly based on $F$. 

Morphisms $f:(V,p)\to (V',p')$ are morphisms of affine varieties over $F$, $f:V\to V'$, that push-forward $p$ appropriately, i.e. $f_*p=p'$. Let $\aGVar$ be the subcategory of affine varieties over $F$.
\end{definition}

Define a functor $\Phi^{\aff}_F:\aGVar\to \aSch$, which is given by
\[\Phi^{\aff}_F(V,p)=\Spec F[V]^p,\] where $F[V]^p:=\{f\in F[V]: (d_px)(\val (f(x))\geq 0)\}$ and $\aSch$ is the category of affine schemes over $\mathcal{O}_F$.
Since $F[V]^p$ is an $\mathcal{O}_F$-algebra, $\Spec F[V]^p$ is indeed a scheme over $\mathcal{O}_F$. As for functoriality,
assume that we have a  morphism \[\Theta:(V_1,p_1)\to (V_2,p_2).\] Let $f\in F[V_2]$ be such that \[(d_{p_2}x)(\val (f(x))\geq 0)\] and $c\models p_1|F$. Since $\Theta(c)\models p_2|F$, \[\val (f\circ \Theta)(c)\geq 0.\]

Let $(V,p)\in \aGVar$. For every $h\in F[V]$, let $F[V]_{(h)}$ be the corresponding localization. If \[(d_px)(\val (h(x))=0)\] then \[\left( F[V]_{h}\right)^p=\left(F[V]^p\right)_{h},\]
where $\left( F[V]_{(h)}\right)^p:=F[\Spec \left( F[V]\right)_{h}]^p$. We will write $F[V]_{h}^p$ for simplicity.
 
\begin{lemma}\label{L:affine-localization}
\begin{enumerate}
\item For any $0\neq r\in F[V]$ there exists $c\in F$ such that \[(d_px)(\val ((c^{-1}r)(x))=0).\]
\item For  $h\in F[V]$ s.t. $(d_px)(\val (h(x))=0)$, \[F[V]_{h}^p\otimes_{\mathcal{O}_F} F\cong F[V]_{h}\] as $F$-algebras.
\end{enumerate}
\end{lemma}
\begin{proof}

\begin{enumerate}
\item Let $a\models p|F$, since $p$ is strictly based on $F$ and thus $\Gamma_{F(a)}=\Gamma_F$, there exists $c\in F$ such that $\val (r(a))=\val (c)$. If $c=0$ then $r$ vanishes on a dense subset of $V(F)$ (i.e. $r=0$ in $F[V]$). Contradiction. So $\val ((c^{-1}r(a))=0$. 

\item Let $F[V]_{(h)}^p\otimes_{\mathcal{O}_F} F\to F[V]_{h}$ be the natural map given by $f\otimes a\mapsto af$.

Surjectivity: Let $r\in F[V]_{h}$, as in $(1)$, we may find $0\neq c\in F$ such that \[\val (c^{-1}r(a))\geq 0\] for $a\models p|F$ so $c^{-1}r\in F[V]_{h}^p$.

Injectivity: $F$ has no $\mathcal{O}_F$-torsion, so by flatness (Fact \ref{F:tors-free})
\[F[V]_{h}^p\otimes_{\mathcal{O}_F} F\hookrightarrow F[V]_{h}\otimes_{\mathcal{O}_F} F\cong F[V]_{h}.\]

\end{enumerate}
\end{proof}

It follows from the above lemma that \[\{\Spec F[V]_{f}:f\in F[V]^{p,0} \},\] where $F[V]^{p,0}=\{f\in F[V_i]^p:(d_px)(\val (f(x))=0)\}$, is a basis for the topology on $\Spec F[V]$.

The following is an easy observation.
\begin{remark}
Let $\Theta:(V_1,p_2)\to (V_2,p_2)$ be a morphism. Then $f\in F[V_2]^{p_2,0}$ if and only if $f\circ\Theta\in F[V_1]^{p_1,0}$.
\end{remark}

\begin{proposition}\label{P:geo-prop-functor}
Let $(V,p)\in\aGVar[F]$. Then $\Phi_F^{\aff}(V,p)$ is quasi-compact separated, integral and flat over $\mathcal{O}_F$. Furthermore, $\Phi_F^{\aff}(V,p)\times_{\mathcal{O}_F}F=V$.
\end{proposition}
\begin{proof}
Every affine scheme is quasi-compact and separated. The scheme $\Phi^{\aff}_F(V,p)$ is integral since $F[V]^p$ is an $\mathcal{O}_F$-subalgebra of $F[V]$. It is a flat since an $\mathcal{O}_F$-module is $\mathcal{O}_F$-torsion free if and only if it is flat (Fact \ref{F:tors-free}). The furthermore follows by Lemma \ref{L:affine-localization}.
\end{proof}

\subsubsection{Defectless Henselian Valued Fields and Descent}\label{ss:Descent}

If $F\subseteq F'$ are valued fields and $(V,p)\in \aGVar[F]$, we would like to find a connection between $\Phi^{\aff}_F(V,p)$ and $\Phi^{\aff}_{F'}(V_{F'},p)$. If $F$ is "well-behaved" we will prove that $\Phi^{\aff}_F(V,p)\times_{\mathcal{O}_F}\mathcal{O}_{F'}=\Phi^{\aff}_{F'}(V_{F'},p)$.

Let $F$ be an henselian valued field, and $L/F$ a finite extension of valued fields. By the fundamental equality 
\[[L:F]=d\cdot [\Gamma_L:\Gamma_F]\cdot [k_L:k_F],\] where $d$ is the defect of the extension. The extension $L/F$ is called \emph{defectless} if $d=1$ and $F$ is \emph{defectless} if $L/F$ is defectless for every finite extension $L/F$. We introduced this notion for henselian valued fields, but it exists for general valued fields (see \cite[Chapter 11]{kuhlmann} for more information).

The following are well known examples.
\begin{example}
\begin{enumerate}	
\item Every model of ACVF is trivially defectless.
\item Every henselian valued field of residue characteristic zero is a defectless field. 
\item Every spherically  complete field is a henselian defectless field.

\item For every prime $p$, $\mathbb{Q}_p$ and $\mathbb{F}_p((t))$ are henselian defectless valued fields.

\item There are examples of extensions with non-trivial defect (see \cite[Section 11.5]{kuhlmann}).
\end{enumerate}
\end{example}

Until the rest of the section let $F$ be an henselian perfect valued field with perfect residue field.

\begin{lemma}\label{L:finite-base-change-strictly}
Let $F$ be as above and $L/F$ a defectless finite extension of valued fields. Let $p$ be a generically stable $F$-definable type. If it is strictly based on $F$ then it is also strictly based on $L$.
\end{lemma}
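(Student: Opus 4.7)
The plan is first to reduce to $c$ transcendental over $F$ (algebraic realizations of the $F$-definable $p$ lie in $\dcl_{VF}(F)=F$, rendering the claim trivial), and then to decompose $L/F$ into its unramified and totally ramified parts. Since $F$ is henselian, defectless and has perfect residue field, $L/F$ admits a tower $F\subseteq F^{nr}\subseteq L$ with $F^{nr}/F$ unramified (so $\Gamma_{F^{nr}}=\Gamma_F$ and $k_{F^{nr}}=k_L$) and $L/F^{nr}$ totally ramified; it then suffices to handle each subextension in turn. In both cases, the algebraicity of $L/F$ together with the transcendentality of $F(c)/F$ yields $[L(c):F(c)]=[L:F]$.

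For the totally ramified case, of degree $n=[\Gamma_L:\Gamma_F]$, I would pick $\pi_1,\dots,\pi_n\in L$ whose valuations represent the $n$ cosets of $\Gamma_L/\Gamma_F$; these form an $F$-basis of $L$, hence an $F(c)$-basis of $L(c)$ by the linear disjointness above. For $x=\sum_i\pi_i\beta_i\in L(c)$ with $\beta_i\in F(c)$, the strict-basing hypothesis gives $\val(\beta_i)\in\Gamma_{F(c)}=\Gamma_F$, so the values $\val(\pi_i)+\val(\beta_i)$ lie in distinct cosets of $\Gamma_F$ inside $\Gamma_{L(c)}$ and are therefore pairwise distinct; thus $\val(x)=\min_i\val(\pi_i\beta_i)\in\Gamma_L+\Gamma_F=\Gamma_L$.

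For the unramified case, write $L=F(\alpha)$ with $\bar\alpha$ generating the separable extension $k_L/k_F$, and let $g\in\mathcal{O}_F[x]$ be the minimal polynomial of $\alpha$, so that $\bar g\in k_F[x]$ is irreducible of degree $n$. The fundamental inequality for $L(c)/F(c)$ yields $[\Gamma_{L(c)}:\Gamma_{F(c)}]\cdot[k_{L(c)}:k_{F(c)}]\leq [L(c):F(c)]=n$. If $\bar g$ remains irreducible over $k_{F(c)}$ then $[k_{L(c)}:k_{F(c)}]\geq n$, which forces $[\Gamma_{L(c)}:\Gamma_{F(c)}]=1$ and hence $\Gamma_{L(c)}=\Gamma_{F(c)}=\Gamma_F=\Gamma_L$. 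The main obstacle is this irreducibility, which I would deduce by showing $k_{F(c)}/k_F$ is a regular extension; since $k_F$ is perfect, it is enough to prove $k_F$ is algebraically closed in $k_{F(c)}$. For $\bar\delta\in k_{F(c)}$ algebraic over $k_F$, write $\bar\delta=\res(r(c))$ for some $r\in F(x)$ with $\val(r(c))\geq 0$, and consider the $F$-definable partial function $f(x)=\res(r(x))$ into the residue sort $k$. The pushforward $f_*p$ is then an $F$-definable type in the stably embedded residue sort $k$ concentrated on a single algebraic element of $k_F^{\mathrm{alg}}$; $F$-definability together with $\dcl(F)\cap k=k_F$ (which is perfect) forces that element to be fixed by all $F$-automorphisms, so $\bar\delta\in k_F$, as required.
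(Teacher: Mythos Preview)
Your argument is correct, but it takes a substantially longer route than the paper's. The paper handles the extension $L/F$ in one stroke: it quotes \cite[Proposition~8.19]{StableDomination} to get that $k_{F(c)}$ and $k_L$ are linearly disjoint over $k_F$ (using that $k_F$ is perfect), hence $[k_{L(c)}:k_{F(c)}]=[k_L:k_F]$; since $\Gamma_{F(c)}=\Gamma_F$ forces $[\Gamma_L:\Gamma_F]\le [\Gamma_{L(c)}:\Gamma_{F(c)}]$, the fundamental inequality together with $[L(c):F(c)]\le [L:F]=[\Gamma_L:\Gamma_F]\cdot[k_L:k_F]$ (defectlessness) squeezes $[\Gamma_{L(c)}:\Gamma_{F(c)}]=[\Gamma_L:\Gamma_F]$, so $\Gamma_{L(c)}=\Gamma_L$. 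No tower, no case split.

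Your approach trades the citation for a self-contained argument: the definability-of-pushforward step you use to show $k_F$ is relatively algebraically closed in $k_{F(c)}$, combined with perfectness of $k_F$, is exactly a proof that $k_{F(c)}/k_F$ is regular, which \emph{implies} the linear disjointness the paper quotes. Having obtained that, your unramified case is just the paper's counting argument specialized to $e=1$, and your totally ramified case replaces the remaining counting by an explicit valuation-basis computation. So what you gain is independence from the external reference; what you lose is brevity, since the paper's single inequality chain subsumes both of your cases at once.

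Two minor remarks. First, your opening reduction and the assertion $[L(c):F(c)]=[L:F]$ are not needed anywhere: the unramified case only uses $[L(c):F(c)]\le n$, and the totally ramified case only uses that the $\pi_i$ \emph{span} $L(c)$ over $F(c)$, which is automatic. Second, ``transcendentality of $F(c)/F$'' is not by itself the right justification for that equality when $c$ is a tuple; what actually holds (and by the very same definability argument you give for residues) is that $F$ is relatively algebraically closed in $F(c)$, which together with perfectness of $F$ does give linear disjointness from $L$.
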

\begin{proof}
Let $c\models p|L$. Since $k_F$ is perfect, by \cite[Proposition 8.19]{StableDomination} $k_{F(c)}$ is linearly disjoint from $k_L$ over $k_F$ and thus $[k_{L(c)}:k_{F(c)}]=[k_L:k_F]$. Since $p$ is strictly based on $F$, i.e. $\Gamma_{F(c)}=\Gamma_F$, \[[\Gamma_L:\Gamma_F]\leq [\Gamma_{L(c)}:\Gamma_{F(c)}],\] and obviously $[L(c):F(c)]\leq [L:F]$. Thus by the fundamental inequality \[[\Gamma_{L(c)}:\Gamma_{F(c)}]\leq \frac{[L(c):F(c)]}{[k_{L(c)}:k_{F(c)}]}\leq \frac{[L:F]}{[k_L:k_F]}=[\Gamma_L:\Gamma_F]\] and consequently $\Gamma_{L(c)}=\Gamma_L$.
\end{proof}

\begin{fact}\label{F:val-basis-for hen-defect}\cite[Lemma 6.17]{kuhlmann}
Let $F$ be an henselian valued field and $L/F$ a finite extension. If $L/F$ is defectless then $L/F$ admits a valuation basis, i.e. there exists a basis $c_1,\dots,c_n$ of $L/F$ such that for every $a_1,\dots, a_n\in F$
\[\val (a_1c_1+\dots a_nc_n)=\min_{i}\{\val (a_ic_i)\}.\]
\end{fact}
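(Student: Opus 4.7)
The plan is to construct a valuation basis explicitly by lifting a residue field basis of $k_L/k_F$ and combining it with representatives of the cosets of $\Gamma_F$ in $\Gamma_L$, then verify the valuation identity by a two-step ultrametric argument.

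First I would set $e := [\Gamma_L:\Gamma_F]$ and $f := [k_L:k_F]$. Since $L/F$ is defectless, the fundamental equality becomes the honest equality $[L:F] = ef$. Pick elements $b_1,\dots,b_f \in \mathcal{O}_L$ whose residues $\bar b_1,\dots,\bar b_f$ form a $k_F$-basis of $k_L$, and pick $\pi_1,\dots,\pi_e \in L^\times$ whose valuations $\gamma_j := \val(\pi_j)$ represent the $e$ distinct cosets of $\Gamma_F$ in $\Gamma_L$. I claim the $ef$ products $c_{ij} := b_i\pi_j$ form a valuation basis; since $ef = [L:F]$, it is enough to establish the valuation identity, as that immediately forces $F$-linear independence and then makes the set a basis by counting.

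The core computation is done in two steps. \emph{Step 1:} For any $a_1,\dots,a_f \in F$, show that $\val\bigl(\sum_i a_i b_i\bigr) = \min_i \val(a_i)$. After dividing by a scalar from $F$ of minimal valuation, one reduces to $\min_i \val(a_i) = 0$, so that all $a_i \in \mathcal{O}_F$ and some $a_{i_0}$ is a unit. Reducing modulo $\mathcal{M}_L$ yields $\sum_i \bar a_i \bar b_i$, which is nonzero in $k_L$ because $\bar a_{i_0} \neq 0$ and the $\bar b_i$ are $k_F$-linearly independent; hence the sum has valuation $0$. \emph{Step 2:} For the full combination $\sum_{i,j} a_{ij} b_i \pi_j$, group as $\sum_j d_j \pi_j$ with $d_j := \sum_i a_{ij} b_i \in L$. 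By Step 1, $\val(d_j) = \min_i \val(a_{ij}) \in \Gamma_F$, so for the nonzero $d_j$ the values $\val(d_j\pi_j) = \val(d_j) + \gamma_j$ lie in pairwise distinct cosets of $\Gamma_F$ in $\Gamma_L$, and are therefore pairwise distinct. The ultrametric inequality for elements of pairwise distinct valuations then gives
\[
\val\Bigl(\sum_j d_j\pi_j\Bigr) = \min_j \val(d_j\pi_j) = \min_{i,j} \val(a_{ij} b_i \pi_j),
\]
which is exactly the valuation basis identity.

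The main obstacle is Step~1, i.e.\ converting linear independence of residues into a valuation equality on the zero-valuation slice; once that lemma is available, Step~2 is a routine application of the ultrametric principle for sums of elements with distinct valuations. Henselianity is used only through the fundamental equality $[L:F] = ef$ that defectlessness supplies in this context, and through the existence of a well-defined unique extension of $\val$ to $L$ which makes the whole setup canonical.
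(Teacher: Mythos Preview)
Your proof is correct. The paper does not supply its own proof of this statement (it is cited as \cite[Lemma 6.17]{kuhlmann}), but immediately afterwards it remarks that the valuation basis may be chosen to be a \emph{standard valuation independent set} of the form $\{b_i'b_j''\}_{i,j}$, where the $\val(b_i')$ lie in distinct cosets of $\Gamma_L$ modulo $\Gamma_F$ and the $b_j''$ have valuation $0$ with $k_F$-linearly independent residues --- precisely the family $c_{ij}=b_i\pi_j$ you construct. So your argument not only proves the Fact but also justifies that subsequent remark, and your two-step ultrametric verification is the standard one.
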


In fact one may choose this valuation basis to be a \emph{standard valuation independent set}, i.e. of the form $\{b_i'b_j''\}_{i,j}$ where the values $\{\val (b_i')\}_i$ lie in distinct cosets of $\Gamma_L$ modulo $\Gamma_F$ and $\{b_j''\}_j$ are elements of $0$ valuation whose residues are $k_F$-linearly independent.

\begin{definition}
Let $L_1/F$ and $L_2/F$ be two extensions of valued fields. We say that $L_1$ is \emph{valuation disjoint} from $L_2$ over $F$ if every standard valuation independent set of $L_1/F$ is also a standard valuation independent set of $L_2L_1/L_2$.

\end{definition}

\begin{fact}\cite[Lemma 2.19]{kuhlmann2}\label{F:val-disj-fvk}
Let $L_1/F$ and $L_2/F$ be two extension of valued fields. Then $L_1/F$ is valuation disjoint from $L_2/F$ if and only if
\begin{enumerate}
\item $\Gamma_{L_1}\cap \Gamma_{L_2}=\Gamma_F$, and
\item $k_{L_1}$ is linearly disjoint from $k_{L_2}$ over $k_F$.
\end{enumerate}
\end{fact}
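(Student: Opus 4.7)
The plan is to unpack the definition of \emph{standard valuation independent} carefully and observe that its two clauses---distinct cosets in $\Gamma_{L_1}/\Gamma_F$, and $k_F$-linearly independent residues in $k_{L_1}$---correspond precisely to conditions (1) and (2). Both directions then reduce to choosing the right test sets.

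First I would establish the forward direction. Assume $L_1/F$ is valuation disjoint from $L_2/F$. To extract condition (1), take $\gamma\in \Gamma_{L_1}\cap \Gamma_{L_2}$ and suppose for contradiction that $\gamma\notin \Gamma_F$. Picking $b\in L_1$ with $\val(b)=\gamma$, the set $\{b,1\}$ (viewed as the collection of $b_i'$'s, with trivial residue witness $b_1''=1$) is standard valuation independent in $L_1/F$ because $\val(b)-\val(1)=\gamma\notin \Gamma_F$. By hypothesis it is also standard valuation independent in $L_1L_2/L_2$, forcing $\gamma\notin \Gamma_{L_2}$, a contradiction. To extract condition (2), let $\bar u_1,\dots,\bar u_n\in k_{L_1}$ be $k_F$-linearly independent, lift them to $u_i\in L_1$ with $\val(u_i)=0$, and apply valuation disjointness to $\{u_i\}$ (with trivial $b_1'=1$); the conclusion is precisely that the $\bar u_i$ remain $k_{L_2}$-linearly independent in $k_{L_1L_2}$.

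For the backward direction, assume (1) and (2) and let $S=\{b_i'b_j''\}_{i,j}$ be a standard valuation independent set of $L_1/F$. I would verify the two clauses defining standard valuation independence in $L_1L_2/L_2$ separately. For $i\neq k$, the difference $\val(b_i')-\val(b_k')\in \Gamma_{L_1}\setminus \Gamma_F$ cannot lie in $\Gamma_{L_2}$ (otherwise (1) would place it in $\Gamma_F$), so the $\val(b_i')$ remain in distinct cosets of $\Gamma_{L_2}$ in $\Gamma_{L_1L_2}$. On the residue side, the $k_F$-linearly independent residues $\res(b_j'')\in k_{L_1}$ are $k_{L_2}$-linearly independent inside $k_{L_1L_2}$ by condition (2), since $k_{L_1L_2}$ contains the compositum $k_{L_1}k_{L_2}$ in which linear disjointness is tested.

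I do not expect a genuine obstacle: the argument is a tight unpacking of definitions, and in fact neither direction requires the henselian, defectless, or perfectness hypotheses in force elsewhere in the section. The only care-point worth flagging is the ambient field in which linear disjointness is interpreted, which is handled cleanly by the observation $k_{L_1L_2}\supseteq k_{L_1}k_{L_2}$.
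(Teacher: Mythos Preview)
The paper does not prove this statement: it is recorded as a \textbf{Fact} with a citation to Kuhlmann and no proof is given. So there is no ``paper's own proof'' to compare against.

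Your argument is correct and is the natural one. Both directions are handled by choosing degenerate standard valuation independent sets that isolate each of the two clauses (take a single residue witness $b_1''=1$ to test the value-group condition, and a single value-group witness $b_1'=1$ to test the residue condition), and the converse direction is a direct verification. Your remark about the ambient field for linear disjointness is the only point requiring care, and you handle it correctly via $k_{L_1L_2}\supseteq k_{L_1}k_{L_2}$.
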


\begin{lemma}\label{L:val-disj}
Let $F$ be as above, $p$ a generically stable $F$-definable type which is strictly based on $F$ and $L/F$ an extension of valued fields. Then $L$ is valuation disjoint from $F(c)$ over $F$, for $c\models p|L$.
\end{lemma}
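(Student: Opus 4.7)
My plan is to verify Lemma \ref{L:val-disj} by directly invoking the criterion provided by Fact \ref{F:val-disj-fvk}: namely, that $L/F$ is valuation disjoint from $F(c)/F$ if and only if (i) $\Gamma_L \cap \Gamma_{F(c)} = \Gamma_F$, and (ii) $k_L$ is linearly disjoint from $k_{F(c)}$ over $k_F$. Once those two conditions are in hand, the conclusion is immediate (applied with the roles of $L$ and $F(c)$ interchanged, since valuation disjointness is symmetric by the characterization of Fact \ref{F:val-disj-fvk}).

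For (i), the key observation is that if $c \models p|L$ then in particular $c \models p|F$ because $F \subseteq L$. Since $p$ is strictly based on $F$ by assumption, this forces $\Gamma_{F(c)} = \Gamma_F$. Consequently $\Gamma_L \cap \Gamma_{F(c)} = \Gamma_L \cap \Gamma_F = \Gamma_F$, and (i) holds trivially. This is the easy half.

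For (ii), the main input is the stable-domination/residue-field machinery already invoked in the proof of Lemma \ref{L:finite-base-change-strictly}: since $k_F$ is perfect and $p$ is generically stable over $F$, \cite[Proposition 8.19]{StableDomination} applies to the realization $c \models p|L$ and yields that $k_{F(c)}$ is linearly disjoint from $k_L$ over $k_F$. This is exactly condition (ii). Combining (i) and (ii), Fact \ref{F:val-disj-fvk} gives the desired valuation disjointness of $L$ and $F(c)$ over $F$.

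There is no real obstacle here; the lemma is essentially a packaging result. The only subtlety to keep in mind is that the hypothesis "strictly based on $F$" is defined via realizations of $p|F$, so one must record the observation that a realization of $p|L$ automatically realizes $p|F$ before one can conclude $\Gamma_{F(c)} = \Gamma_F$. Everything else follows formally from the two cited facts.
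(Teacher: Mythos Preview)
Your proposal is correct and follows essentially the same route as the paper: both verify the two conditions of Fact~\ref{F:val-disj-fvk}, using strict basedness for the value-group condition and \cite[Proposition 8.19]{StableDomination} together with perfectness of $k_F$ for the residue-field linear disjointness. The only minor point the paper makes explicit that you leave implicit is the identification $k_F = k(F)$ (which requires $k_F$ perfect) needed to pass from the conclusion of \cite[Proposition 8.19]{StableDomination}, stated in terms of $k(\cdot) = \dcl(\cdot)\cap k$, to the condition on $k_{F(c)}$ and $k_L$.
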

\begin{proof}
We use Fact \ref{F:val-disj-fvk}. Since $p$ is strictly based on $F$, $\Gamma_{F(c)}\cap \Gamma_L=\Gamma_F\cap \Gamma_L=\Gamma_F$.

Recall that for a valued field field $A$, $k(A):=\dcl(A)\cap k(\mathbb{U})$. By \cite[Proposition 8.19]{StableDomination}, $k(\acl(F(a))$ and $k(L)$ are linearly disjoint over $k(F)$. Also, since $k_F$ is perfect, $k_F=k(F)$ and the result follows.
\end{proof}

Recall the following result:

\begin{lemma}\label{L:fraction field=generic point}
Let $(V,p)\in \aGVar$. Then $F(V)$ (the field of rational functions on $V$) is a valued field with valuation \[F(V)\ni f\mapsto \val (f(c))\] where $c\models p$.
\end{lemma}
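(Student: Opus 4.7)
The strategy is to realize $F(V)$ as a subfield of $\mathbb{K}$ via evaluation at a fixed realization $c$, and then pull back the valuation from $\mathbb{K}$.

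First I would fix $c\models p$ and consider the $F$-algebra homomorphism $\varepsilon_c\colon F[V]\to \mathbb{K}$ given by $f\mapsto f(c)$. Since $V$ is a geometrically integral affine variety over $F$, the ring $F[V]$ is an integral domain. Because $p$ is Zariski dense in $V$, the realization $c$ does not concentrate on any proper Zariski closed subset; equivalently, $f(c)\neq 0$ for every $0\neq f\in F[V]$. Hence $\ker\varepsilon_c=0$, and $\varepsilon_c$ extends uniquely to a field embedding $\tilde{\varepsilon}_c\colon F(V)\hookrightarrow \mathbb{K}$.

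Next, composing $\tilde{\varepsilon}_c$ with the valuation on $\mathbb{K}$ gives a map $F(V)\to \Gamma\cup\{\infty\}$ satisfying multiplicativity and the ultrametric inequality, inherited directly from $\val$ on $\mathbb{K}$; so it is a valuation. By construction it restricts on $F[V]$ to $f\mapsto \val(f(c))$ and sends a general element $f/g\in F(V)$ to $\val(f(c))-\val(g(c))$.

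Finally, I would observe that the assignment does not depend on the particular realization: since $p$ is generically stable it is orthogonal to $\Gamma$, so for each $f\in F[V]$ the function $c\mapsto \val(f(c))$ takes the same value on every realization of $p$. Thus the valuation depends only on $(V,p)$; the fact that its values lie in $\Gamma_F$ (rather than merely in $\Gamma(F)=\mathbb{Q}\otimes\Gamma_F$) follows from $p$ being strictly based on $F$. There is no serious obstacle here — the only substantive point is using Zariski density to ensure $\varepsilon_c$ is injective, and orthogonality to $\Gamma$ to ensure independence of $c$.
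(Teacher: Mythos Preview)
Your proof is correct and follows essentially the same approach as the paper: use Zariski density of $p$ to see that evaluation at $c$ embeds $F[V]$ (hence $F(V)$) into $\mathbb{K}$, and pull back the valuation. The paper's proof is a two-line version of yours; your additional remarks on independence of the choice of $c$ and on the values landing in $\Gamma_F$ via ``strictly based on $F$'' are correct elaborations that the paper leaves implicit.
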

\begin{proof}
Let $c\models p$. Since $p$ is dense in $V$, for $f(x)\in F[V]$, $\val (f(c))=\infty$ if and only if $f=0$ (in $F[V]$). So the map given is indeed a valuation on $F(V)$ and extends the one given on $F$.
\end{proof}

Combining the above we get our desired result for defectless finite extensions:

\begin{proposition}\label{P:descent}
Let $F$ be as above, $(V,p)\in\aGVar$ and $L/F$ be a defectless finite extension of valued fields.
Then \[\Phi^{\aff}_F(V,p)\times_{\mathcal{O}_F}\mathcal{O}_L=\Phi^{\aff}_L(V_L,p).\]
\end{proposition}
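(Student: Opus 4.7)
The plan is to reduce to the affine case, since both $\Phi_F(V,p)$ and $\Phi_L(V_L,p)$ are built by gluing on an affine open cover of $V$ (the same cover works for both, and all the basic open sets used for gluing remain basic after base change because Claim 2 in Proposition~\ref{P:functor exists} localizes the construction). Thus it suffices to prove, for each affine open $V \subseteq V$, that the natural $\mathcal{O}_L$-algebra map
\[ F[V]^p \otimes_{\mathcal{O}_F} \mathcal{O}_L \longrightarrow L[V]^p \]
is an isomorphism, where $L[V] = F[V] \otimes_F L$. Note that $p$ is $L$-definable and, by Lemma~\ref{L:finite-base-change-strictly}, strictly based on $L$, so $L[V]^p$ makes sense.

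The main step is to show that, as subrings of $L[V]$,
\[ L[V]^p \;=\; \mathcal{O}_L \cdot F[V]^p. \]
Fix a valuation basis $c_1,\dots,c_n$ of $L/F$ given by Fact~\ref{F:val-basis-for hen-defect}. By Lemma~\ref{L:val-disj} together with Fact~\ref{F:val-disj-fvk}, $L$ is valuation disjoint from $F(c)$ over $F$ (where $c \models p|L$), so $c_1,\dots,c_n$ remains a valuation basis of $L(c)/F(c)$. Writing $g \in L[V]$ uniquely as $g = \sum_i c_i a_i$ with $a_i \in F[V]$, this gives
\[ \val(g(c)) \;=\; \min_i\{\val(c_i) + \val(a_i(c))\}, \]
so $g \in L[V]^p$ if and only if $\val(c_i) + \val(a_i(c)) \geq 0$ for every $i$. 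Because $p$ is strictly based on $F$, $\val(a_i(c)) \in \Gamma_F$ whenever $a_i \neq 0$, so we may pick $\lambda_i \in F^\times$ with $\val(\lambda_i) = \val(a_i(c))$; then $\lambda_i^{-1} a_i \in F[V]^p$ and $\lambda_i c_i \in \mathcal{O}_L$, yielding $c_i a_i = (\lambda_i c_i)(\lambda_i^{-1} a_i) \in \mathcal{O}_L \cdot F[V]^p$. The reverse inclusion is immediate. This is the main technical step.

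To finish, I will check that the map $F[V]^p \otimes_{\mathcal{O}_F} \mathcal{O}_L \to L[V]$ is injective with image $\mathcal{O}_L \cdot F[V]^p$. Injectivity follows because $\mathcal{O}_L$ is torsion-free as an $\mathcal{O}_F$-module, hence flat by Fact~\ref{F:tors-free}, so $F[V]^p \otimes_{\mathcal{O}_F} \mathcal{O}_L \hookrightarrow F[V] \otimes_{\mathcal{O}_F} \mathcal{O}_L$; and the latter equals $L[V]$ once one knows $F \otimes_{\mathcal{O}_F} \mathcal{O}_L = L$, which holds because $[\Gamma_L : \Gamma_F] \leq [L:F]$ is finite, so $\Gamma_F$ is cofinal in $\Gamma_L$ and any element of $L$ can be cleared by an element of $\mathcal{O}_F$. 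Combined with the description of $L[V]^p$ just established, the map becomes an isomorphism of $\mathcal{O}_L$-algebras, which is exactly the desired identity of schemes after applying $\Spec$.

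The hard part is the explicit factorization argument in the main step, which crucially uses both the defectlessness (to have a valuation basis) and the ``strictly based'' hypothesis (to absorb each $\val(a_i(c)) \in \Gamma_F$ into an element of $F^\times$). Everything else is formal: flatness for injectivity, and gluing to pass from affine to general.
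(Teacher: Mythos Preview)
Your proof is correct and follows essentially the same approach as the paper: reduce to the affine case, take a (standard) valuation basis of $L/F$, use Lemma~\ref{L:val-disj} to see it remains a valuation basis over $F(c)$, and then renormalize each summand using that $p$ is strictly based on $F$. The paper phrases the key claim at the level of the function field (showing $L(V)^p = F(V)^p \otimes_{\mathcal{O}_F} \mathcal{O}_L$ first and then intersecting with $L[V]$), whereas you work directly in $L[V]$; your version is a mild streamlining, and your explicit flatness argument for injectivity and the verification that $F \otimes_{\mathcal{O}_F} \mathcal{O}_L = L$ fill in details the paper leaves implicit. One small remark: to invoke valuation disjointness via Lemma~\ref{L:val-disj} you should take the basis supplied by Fact~\ref{F:val-basis-for hen-defect} to be a \emph{standard} valuation independent set (as noted right after that Fact), since that is what the definition of valuation disjointness transports.
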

\begin{proof}
If $F$ is trivially valued there is nothing to prove.

Let $\mathcal{B}=(v_1,\dots,v_n)$ be a standard valuation basis for $L/F$ and $c\models p|L$. By Lemma \ref{L:val-disj}, $\mathcal{B}$ is also a standard valuation basis for $L(c)/F(c)$. Recall that $F(V)\simeq F(c)$ and $L(V)\simeq L(c)$. Define \[F(V)^p=\{f\in F(V): \val (f(c))\geq 0\},\] and similarly $L(V)^p$, they are the valuation rings of $F(V)$ and $L(V)$ with respect to the valuation given by Lemma \ref{L:fraction field=generic point}.

\begin{claim}
$L(V)^p=F(V)^p\otimes_{\mathcal{O}_F}\mathcal{O}_L$.
\end{claim}
\begin{claimproof}
One inclusion is obvious, for the other let $f\in L(V)^p$. Since $\mathcal{B}$ is a basis there exist $f_1,\dots,f_n\in F(V)$ such that \[f=\sum_{i=1}^n f_i\cdot v_i.\] 
Since $\mathcal{B}$ is a valuation basis and $f\in L(V)^p$, $\val (f_i\cdot v_i)\geq 0$ for every $i$. The type $p$ is strictly based on $F$ and hence for every $i$ with $f_i\neq 0$ we may choose $a_i\in F^\times$ such that $\val (f_i)=\val (a_i)$ and write
\[f=\sum_{i=1}^n (a_i^{-1}f_i)\cdot (a_iv_i).\]
Thus $a_i^{-1}f_i\in F(V)^p$ and $a_iv_i\in \mathcal{O}_L$.
\end{claimproof}

Since $L[V]=F[V]\otimes_F L$ and $\mathcal{B}$ is a basis the Claim implies that \[L[V]^p=F[V]^p\otimes_{\mathcal{O}_F}\mathcal{O}_L.\]
\end{proof}

\begin{corollary}\label{C:descent-for-alg}
Let $(V,p)\in\aGVar$ where $F$ is as above.
If in addition $F$ is defectless, then $\Phi^{\aff}_F(V,p)\times_{\mathcal{O}_F}\mathcal{O}_L=\Phi^{\aff}_L(V_L,p)$ for every algebraic extension of valued fields $L/F$.
\end{corollary}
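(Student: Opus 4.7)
The plan is to bootstrap Proposition \ref{P:descent}, which handles defectless finite extensions, to arbitrary algebraic extensions by writing $L$ as a directed union of its finite subextensions and passing to a filtered colimit at the level of $\mathcal{O}_F$-algebras.

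First I would note that $\Phi_L(V_L,p)$ is in fact defined, that is, that $L$ qualifies as a base and $p$ is strictly based on $L$. Since $F$ is perfect Henselian and $L/F$ is algebraic, $L$ is again perfect Henselian, hence definably closed in ACVF. Writing $L = \bigcup_i L_i$ with each $L_i/F$ finite, the defectless assumption on $F$ gives that every $L_i/F$ is defectless; Lemma \ref{L:finite-base-change-strictly} then yields that $p$ is strictly based on each $L_i$. Given $c\models p|L$, any $\gamma\in\Gamma_{L(c)}$ is realized in some $L_i(c)$, hence lies in $\Gamma_{L_i}\subseteq\Gamma_L$, showing $p$ is strictly based on $L$.

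Next I would reduce to the affine case exactly as in the proof of Proposition \ref{P:descent}: both sides are built by gluing along affine open covers of $V$, and base change commutes with this gluing. For affine $V$, Proposition \ref{P:descent} applied to each $L_i/F$ gives
\[
F[V]^p\otimes_{\mathcal{O}_F}\mathcal{O}_{L_i} = L_i[V]^p.
\]
Since $\mathcal{O}_L = \varinjlim_i \mathcal{O}_{L_i}$ and tensor products commute with filtered colimits,
\[
F[V]^p\otimes_{\mathcal{O}_F}\mathcal{O}_L \;=\; \varinjlim_i F[V]^p\otimes_{\mathcal{O}_F}\mathcal{O}_{L_i} \;=\; \varinjlim_i L_i[V]^p.
\]

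Finally, a direct check shows $\varinjlim_i L_i[V]^p = L[V]^p$: any $f\in L[V]$ lies in some $L_i[V]$, and the condition $(d_px)(\val(f(x))\geq 0)$ depends only on $p$ and $f$, not on the ambient base field. This gives the desired identification $\Phi_F(V,p)\times_{\mathcal{O}_F}\mathcal{O}_L=\Phi_L(V_L,p)$. The only non-routine ingredient is the verification that $p$ remains strictly based on $L$, which rests on applying Lemma \ref{L:finite-base-change-strictly} cofinally along the tower; everything else is a formal limit argument once Proposition \ref{P:descent} is in hand.
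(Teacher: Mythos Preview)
Your proof is correct and follows essentially the same route as the paper: the paper's one-line argument (``every $f\in L[V]^p$ lies in $L'[V]^p$ for some finite extension $L'/F$'') is precisely your step identifying $\varinjlim_i L_i[V]^p = L[V]^p$, after which Proposition~\ref{P:descent} handles each finite piece. You have been more careful than the paper in verifying that $p$ remains strictly based on $L$ (so that $\Phi_L(V_L,p)$ is defined) and in making the colimit formalism explicit, but the underlying idea is identical.
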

\begin{proof}
Every $f\in L[V]^p$ lies in $L'[V]^p$ for some finite extension of valued fields $L'/F$.
\end{proof}

We will now show that we may descend between models. 

Recall that a valued field $C$ is \emph{maximally complete} if it has no immediate proper extension. By Zorn's Lemma every valued field has an immediate maximally complete extension (\cite[Theorem 8.22]{kuhlmann}). A maximally complete immediate extension of an algebraically closed valued field is also algebraically closed. Thus, by quantifier elimination, for $K\models$ACVF there exists $K\prec K_1$ with $K_1$ maximally complete.

\begin{fact}\cite[Proposition 12.1]{StableDomination}\label{F:val-basis-over-max-complete}
Let $C\leq A$ be an extension of non-trivially valued fields, with $C$ maximally complete, and $V$ be a finite dimensional $C$-vector subspace of $A$. Then $V$ admits a valuation basis.
\end{fact}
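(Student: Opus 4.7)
The plan is to induct on $n = \dim_C V$. The case $n = 1$ is trivial: any nonzero vector is a valuation basis. For the inductive step, pick a codimension-one subspace $V' \subset V$ with valuation basis $v_1, \ldots, v_{n-1}$ (by inductive hypothesis), and choose $w \in V \setminus V'$. The goal is to find $u \in V'$ achieving the supremum $\gamma := \sup_{u' \in V'} \val(w - u')$; once this is done, $w' := w - u$ completes $v_1, \ldots, v_{n-1}$ to a valuation basis of $V$.

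First I would verify that the existence of a maximizer $u$ indeed produces a valuation basis. For any $a \in C^\times$ and $u'' \in V'$, writing $aw' + u'' = a\bigl(w - (u - u''/a)\bigr)$ and invoking the maximality of $\val(w - u)$ gives $\val(aw' + u'') \leq \val(aw')$; combined with the ultrametric inequality, this forces $\val(aw' + u'') = \min(\val(aw'), \val(u''))$. Expanding $u'' = \sum_{i<n} a_i v_i$ and applying the inductive valuation basis of $V'$ yields the full valuation basis identity on $V$.

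The heart of the proof is to produce the maximizer, and this is precisely where the maximal completeness of $C$ is used. Choose a sequence $(u_k) \subset V'$ with $\val(w - u_k)$ strictly increasing to $\gamma$; then for $k < k'$, $\val(u_k - u_{k'}) = \val(w - u_k)$, so $(u_k)$ is pseudo-Cauchy in $V'$. Expanding $u_k = \sum_i c_{k,i} v_i$ and using the valuation basis of $V'$, one has $\val(u_k - u_{k'}) = \min_i \bigl(\val(c_{k,i} - c_{k',i}) + \val(v_i)\bigr)$. After passing to a subsequence so that a fixed coordinate $i_0$ realizes this min at every step, the sequence $(c_{k,i_0})$ is pseudo-Cauchy in $C$, while every other coordinate sequence is Cauchy. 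Maximal completeness of $C$ supplies (pseudo-)limits $c_i \in C$, and $u := \sum c_i v_i$ satisfies $\val(u - u_k) \geq \val(w - u_k)$, hence $\val(w - u) \geq \gamma$. The degenerate case $\gamma = \infty$ is excluded because it would force $u = w \in V'$, contradicting the choice of $w$.

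The main technical obstacle is the pigeonhole step translating pseudo-Cauchyness in $V'$ into coordinate-wise pseudo-Cauchyness in $C$: the valuation basis of $V'$ turns its valuation into a ``max-norm'' in the coordinates, and finiteness of the index set allows us to stabilize the realizing coordinate after passing to a subsequence. Everything else is bookkeeping on top of maximal completeness.
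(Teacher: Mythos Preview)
The paper does not prove this statement: it is recorded as a \emph{Fact} and attributed to \cite[Proposition 12.1]{StableDomination} with no argument given. So there is no in-paper proof to compare against; I will instead comment on the soundness of your outline, which follows the standard route (and is essentially how the cited reference argues).

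Your overall architecture --- induct on $\dim_C V$, reduce the inductive step to finding a best approximant $u\in V'$ to $w$, then verify that $w-u$ extends the valuation basis --- is correct, and your verification that a maximizer yields a valuation basis is clean.

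The part that needs tightening is the coordinate/pigeonhole step. Two points:
\begin{itemize}
\item The distinction you draw between the coordinate $i_0$ (``pseudo-Cauchy'') and the others (``Cauchy'') is not right. For \emph{every} coordinate $j$ you only know $\val(c_{k,j}-c_{k',j})\geq \val(w-u_k)-\val(v_j)$ for $k<k'$; this increasing lower bound neither forces the sequence to be pseudo-Cauchy nor Cauchy. The clean fix is to drop the pigeonhole entirely and use spherical completeness (which is equivalent to maximal completeness) directly: for each $j$ the balls $\{a\in C:\val(a-c_{k,j})\geq \val(w-u_k)-\val(v_j)\}$ are nested and nonempty, hence meet in some $c_j$; then $u=\sum c_jv_j$ works.
\item Your approximating family $(u_k)$ should be indexed by a well-ordered set of the correct cofinality, not implicitly by $\omega$: if $\gamma$ has uncountable cofinality in $\Gamma\cup\{\infty\}$, a countable sequence cannot approach it.
\end{itemize}
With those adjustments your argument goes through and matches the proof in the cited source.
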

%

\begin{proposition}\label{P:desc-between-models}
Let $K\subseteq L$ be an extension of models of ACVF and $(V,p)\in\aGVar[K]$. Then $\Phi^{\aff}_K(V,p)\times_{\mathcal{O}_K}\mathcal{O}_L=\Phi^{\aff}_L(V_L,p)$.
\end{proposition}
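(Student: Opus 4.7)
The plan is to mirror the proof of \Cref{P:descent}, with \Cref{F:val-basis-over-max-complete} playing the role of \Cref{F:val-basis-for hen-defect}. By functoriality of $\Phi$ and the construction of $\Phi_F(V,p)$ from an affine open cover (\Cref{P:functor exists}), it suffices to establish $K[V]^p\otimes_{\mathcal{O}_K}\mathcal{O}_L = L[V]^p$ in the affine case. The inclusion $\subseteq$ is immediate from flatness of $\mathcal{O}_L$ over $\mathcal{O}_K$ (\Cref{F:tors-free}) and functoriality of $\Phi^{\aff}_{(-)}$.

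For the reverse inclusion I first treat the case where $K$ is maximally complete. Given $f \in L[V]^p$, expand $f=\sum_i b_i g_i$ with $g_i$ ranging over a fixed $K$-basis of $K[V]$, $b_i \in L$, and let $W := \mathrm{span}_K(b_1,\dots,b_n) \subseteq L$. \Cref{F:val-basis-over-max-complete} supplies a valuation basis of $W$ over $K$, which can be standardized (in the sense of the paragraph after \Cref{F:val-basis-for hen-defect}) by grouping vectors by the coset of their value in $\Gamma_L/\Gamma_K$ and rescaling within each coset; the $k_K$-linear independence of the residues is forced by the valuation-basis property (otherwise one produces elements contradicting the minimum-valuation formula). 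Expanding $f=\sum_k f_k v_k$ with $f_k\in K[V]$ in the resulting standard valuation basis $v_1,\dots,v_m$, \Cref{L:val-disj} (whose hypotheses hold since a model of ACVF is perfect with perfect residue field and $p$ is strictly based on $K$ by the definition of $\GVar[K]$) tells us that $L$ is valuation disjoint from $K(c)$ over $K$ for $c\models p|L$; hence $\{v_k\}$ remains a standard valuation independent set over $K(c)$, so $\val(f(c))=\min_k \val(f_k(c)v_k)\geq 0$. Strict basedness then produces $\alpha_k \in K^\times$ with $\alpha_k^{-1}f_k \in K[V]^p$ and $\alpha_k v_k \in \mathcal{O}_L$, exactly as in \Cref{P:descent}, giving $f \in K[V]^p\otimes_{\mathcal{O}_K}\mathcal{O}_L$.

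For general $K$ I reduce to the maximally complete case by fixing a maximally complete immediate extension $K\prec K_1$ (algebraically closed, hence a model of ACVF by quantifier elimination) and, by saturation of an elementary extension of $L$ in ACVF, embedding $K_1$ into a common extension $L, K_1 \subseteq L_1$ with $L_1 \models\mathrm{ACVF}$. The maximally complete case applied to $K_1 \subseteq L_1$ gives $L_1[V]^p = K_1[V]^p \otimes_{\mathcal{O}_{K_1}} \mathcal{O}_{L_1}$, and combined with the analogous descent across the immediate extension $K\subseteq K_1$ one obtains $L_1[V]^p = K[V]^p \otimes_{\mathcal{O}_K}\mathcal{O}_{L_1}$. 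Since $\mathcal{O}_{L_1}$ is faithfully flat over $\mathcal{O}_L$ (flat by torsion-freeness and faithfully flat as $\mathcal{M}_L\mathcal{O}_{L_1}\subsetneq\mathcal{O}_{L_1}$), faithfully flat descent applied to the inclusion $K[V]^p\otimes_{\mathcal{O}_K}\mathcal{O}_L \hookrightarrow L[V]$ yields $L[V]^p = L[V]\cap \bigl(K[V]^p\otimes_{\mathcal{O}_K}\mathcal{O}_{L_1}\bigr) = K[V]^p\otimes_{\mathcal{O}_K}\mathcal{O}_L$.

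The main technical obstacle is the immediate-extension step $K\subseteq K_1$. Because $\Gamma_{K_1}=\Gamma_K$ and $k_{K_1}=k_K$, a $K$-subspace $W\subseteq K_1$ of dimension $\geq 2$ generally admits no standard valuation basis over $K$, so the valuation-basis argument of the maximally complete case does not apply verbatim; the equality $K_1[V]^p = K[V]^p\otimes_{\mathcal{O}_K}\mathcal{O}_{K_1}$ must be derived by exploiting maximal completeness of $K_1$ (rather than of the base $K$) and a faithfully flat descent along $\mathcal{O}_K\hookrightarrow\mathcal{O}_{K_1}$, or else by a Nakayama-style lifting from the common residue ring $k_{K_1}[V]_{r_*p}=k_K[V]_{r_*p}$.
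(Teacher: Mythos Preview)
Your proposal has a genuine gap at exactly the point you flag: the immediate-extension step $K\subseteq K_1$. You correctly observe that when $\Gamma_{K_1}=\Gamma_K$ and $k_{K_1}=k_K$, no $K$-subspace of $K_1$ of dimension $\geq 2$ admits a standard valuation basis, so the argument you ran in the maximally complete case cannot be repeated. The alternatives you gesture at (exploiting maximal completeness of $K_1$ rather than the base, a Nakayama-style lifting, faithfully flat descent along $\mathcal{O}_K\hookrightarrow\mathcal{O}_{K_1}$) are not carried out, and it is not clear any of them goes through without additional input. As written the proof does not close.

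The paper's argument is organized differently and sidesteps this obstacle entirely. Instead of seeking valuation bases of finite-dimensional $K$-subspaces of $L$, it looks for valuation bases on the \emph{other} tensor factor: fix a degree bound $d$, take a maximally complete $K_1\succ L$, and use \Cref{F:val-basis-over-max-complete} to produce a valuation basis $f_1,\dots,f_n$ of $K_1[c]_{\leq d}$ over $K_1$ for $c\models p|K_1$. The key move is then model-theoretic: since $p$ is $K$-definable and the degrees are bounded, the existence of such a basis is expressed by a first-order sentence (via the $d_p$-scheme), so by $K\prec K_1$ one may take the $f_i$ in $K[V]$. After normalizing so that $\val(f_i(c))=0$, the same $d_p$-sentence shows the $f_i$ form a valuation basis of $L[c]_{\leq d}$ over $L$ as well, and any $f\in L[V]^p$ of degree $\leq d$ then decomposes as $\sum a_if_i$ with $a_i\in\mathcal{O}_L$. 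No tower $K\subseteq K_1\subseteq L_1$ and no separate immediate-extension argument is needed: the single model-completeness transfer replaces your entire reduction.
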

\begin{proof}
We will show that 
\[K[V]^p\otimes_{\mathcal{O}_K}\mathcal{O}_L=L[V]^p.\] Since $K[V]^p\otimes_{\mathcal{O}_K}\mathcal{O}_L\subseteq L[V]^p$, we show the other direction.

Let $L\prec K_1$ be a maximally complete extension and let $c\models p|K_1$. 

Assume that $L[V]=L[\bar{X}]/I$. Let $H_d$ be the vector space of polynomials of degree $\leq d$, $I_d=H_d\cap I$ and $U_d=H_d/I_d$. It is a subvector space of $L[\bar{X}]/I$. Denote by $L[c]_{\leq d}$ the $L$-vector spaces of polynomials of degree $\leq d$ in $c$. This may be identified with the vector space $U_d$. Similarly, define $K[c]_{\leq d}$ and thus \[K[c]_{\leq d}\otimes_K L=L[c]_{\leq d}.\] 

\begin{claim}
$K[c]_{\leq d}$ has a valuation basis over $K$.
\end{claim}
\begin{claimproof}
By Fact \ref{F:val-basis-over-max-complete} there exists a valuation basis $f_1,\dots,f_n$ of $K_1[c]_{\leq d}$, i.e. for any $a_1,\dots, a_n\in K_1$
\[(d_px)\left( \val \left( \sum_{i=1}^na_if_i(x)\right)=\min\{\val (a_1f_1(x)),\dots,\val (a_nf_n(x))\}\right).\]
Since the degrees of the basis elements and polynomials in question are bounded by $d$ the existence of such generators is a first order sentence so by model completeness we may assume that such exist in $K$.
\end{claimproof}

Let $f_1(c),\dots,f_n(c)$ be a valuation basis of $K[c]_{\leq d}$ over $K$. Since $p$ is strictly based on $K$ we may assume that $\val (f_i(c))=0$ for all $i$. As in the proof of the claim, since the fact that the $f_i(c)$ form a valuation basis is contained in the type, they also form a valuation basis over $L$.  


Let $f\in L[V]^p$ of degree $\leq d$. By the above there exist $a_1,\dots,a_n\in L$ such that $f(c)=\sum_i a_if_i(c)$, and since the $f_i(c)$ form a valuation basis and $\val f_i(c)=0$, necessarily $a_i\in \mathcal{O}_L$ as needed.
\end{proof}

\subsubsection{Products}

Using the descent results from the previous section, we show that the functor $\Phi^{\aff}_F$, for nice enough $F$, commutes with finite products.

The following fact is essentially Lemma 12.4 in \cite{StableDomination}, the proof of the Lemma gives the rest of the statements

\begin{fact}\cite[Lemmas 12.4]{StableDomination}\label{F:prod-from-stabdom}
Let $C$ be a maximally complete algebraically closed valued field, and $C\subseteq A,B\subseteq \mathbb{U}$ two valued field extensions. Assume that $\Gamma(C)=\Gamma(A)$ and that $k(A)$ and $k(B)$ are linearly disjoint over $k(C)$. For any $b_1,\dots,b_n\in B$ there exist $b_1^\prime,\dots, b_k^\prime$ ($k\leq n$) in the $C$-vector space spanned by $\{b_1,\dots,b_n\}$ satisfying $\val(b_j^\prime)=0$ for all $1\leq j\leq k$ and that 
\begin{enumerate}
\item for every $a_1,\dots,a_k\in A$,
\[\val \left(\sum_{i=1}^k a_ib_i^\prime\right)=\min_j \{\val (a_j)\};\]
\item for any $a_1\dots,a_n\in A$ there exist $a^\prime_1,\dots, a_k^\prime$ in the $C$-vector space spanned by $\{a_1,\dots,a_n\}$ such that in $A\otimes_C B$ we have $\sum_{i=1}^n a_i\otimes b_i=\sum_{j=1}^k a^\prime_j\otimes b_j^\prime$ and \[\val \left(\sum_{i=1}^n a_ib_i\right)=\min_j \{\val (a^\prime_j)\}.\]
\end{enumerate}
\end{fact}

\begin{proposition}\label{P:Cor_from_Lem12.4}
Fact \ref{F:prod-from-stabdom} holds for $K\subseteq K(c), K(d)$ where $K$ is a maximally complete algebraically closed valued field and $(c,d)\models (p\otimes q)|K$ for generically stable $K$-definable $p$ and $q$.

\end{proposition}
\begin{proof}
By using \cite[Proposition 8.19]{StableDomination}, the assumptions of Fact \ref{F:prod-from-stabdom} hold.
\end{proof}

\begin{proposition}\label{P:prod-two-affines}
Let $F$ be a defectless henselian perfect valued field with perfect residue field and $(V_1,p_1),(V_2,p_2)\in\aGVar$, with $p_1\otimes p_2$ strictly based on $F$. Then \[F[V_1\times_F V_2]^{p_1\otimes p_2}=F[V_1]^{p_1}\otimes_{\mathcal{O}_F}F[V_2]^{p_2}.\]
\end{proposition}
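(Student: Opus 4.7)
The plan is to reduce to the case where $F$ is a maximally complete model of ACVF via descent, and there apply Lemma \ref{C:Cor_from_Lem12.4}. The inclusion $F[V_1]^{p_1}\otimes_{\mathcal{O}_F}F[V_2]^{p_2} \hookrightarrow F[V_1\times V_2]^{p_1\otimes p_2}$ is immediate: if $(c,d)\models (p_1\otimes p_2)|F$, then $\val(f(c)g(d))\geq 0$ whenever $\val(f(c)),\val(g(d))\geq 0$. The real work is in the reverse inclusion.

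For the descent, assume $F$ is non-trivially valued (the trivial case is trivial) and embed $F\subseteq F^{\mathrm{alg}}\subseteq K_1$ with $K_1$ a maximally complete elementary extension of $F^{\mathrm{alg}}$. Apply Corollary \ref{C:descent-for-alg} (using defectlessness of $F$) and then Proposition \ref{P:desc-between-models}, both to $(V_i,p_i)$ and to $(V_1\times V_2, p_1\otimes p_2)$, to conclude that each of the three $\mathcal{O}_F$-algebras appearing in the statement base-changes along $\mathcal{O}_F\to \mathcal{O}_{K_1}$ to the corresponding $\mathcal{O}_{K_1}$-algebra. Since $\mathcal{O}_{K_1}$ is $\mathcal{O}_F$-torsion-free it is flat by Fact \ref{F:tors-free}, and since $\mathcal{M}_F\mathcal{O}_{K_1}\subseteq \mathcal{M}_{K_1}$ is proper it is faithfully flat, so establishing the desired equality over $\mathcal{O}_{K_1}$ suffices.

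So assume now that $F$ is itself a maximally complete model; in particular $\Gamma_F$ is divisible, which makes every generically stable $F$-definable type automatically strictly based on $F$. Take $h=\sum_{i=1}^n a_i\otimes b_i\in F[V_1\times V_2]^{p_1\otimes p_2}$ with $a_i\in F[V_1]$, $b_i\in F[V_2]$, and pick $(c,d)\models(p_1\otimes p_2)|F$. Lemma \ref{C:Cor_from_Lem12.4}, together with the remark following Fact \ref{F:prod-from-stabdom}, produces a rewriting $h=\sum_{j=1}^k a'_j\otimes b'_j$ in $F[V_1]\otimes_F F[V_2]$ with each $a'_j$ in the $F$-span of the $a_i$ (so $a'_j\in F[V_1]$), each $b'_j$ in the $F$-span of the $b_i$ (so $b'_j\in F[V_2]$), and $\val(h(c,d))=\min_j\val(a'_j(c)b'_j(d))\geq 0$. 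Strict basedness then furnishes $\lambda_j\in F^{\times}$ with $\val(\lambda_j)=\val(a'_j(c))$ for each non-zero $a'_j$, yielding $a'_j/\lambda_j\in F[V_1]^{p_1}$, $\lambda_j b'_j\in F[V_2]^{p_2}$, and $h=\sum_j (a'_j/\lambda_j)\otimes(\lambda_j b'_j)\in F[V_1]^{p_1}\otimes_{\mathcal{O}_F}F[V_2]^{p_2}$.

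The main obstacle is the combinatorial step in the maximally complete case: one needs both the cancellation of the valuation sum under a nontrivial rewriting of a tensor, and the guarantee that the new summands remain in the polynomial rings $F[V_i]$ and not merely in their fraction fields—exactly what Lemma \ref{C:Cor_from_Lem12.4} (via the remark after Fact \ref{F:prod-from-stabdom}) supplies. The descent portion is essentially bookkeeping once faithful flatness of $\mathcal{O}_{K_1}$ over $\mathcal{O}_F$ is noted.
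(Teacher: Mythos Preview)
Your proof is correct and follows the same overall architecture as the paper's: reduce via descent to a situation where Lemma~\ref{C:Cor_from_Lem12.4} applies, then renormalize using strict basedness. The difference lies in how far you descend. The paper only goes from $F$ to $K=F^{\mathrm{alg}}$ via Corollary~\ref{C:descent-for-alg}; over $K$ it passes to a maximally complete $K_1\succ K$ solely to invoke Lemma~\ref{C:Cor_from_Lem12.4}, obtains the rewritten tensor with factors in the $K_1$-span of the originals, and then uses definability of $p_1\otimes p_2$ to replace these by elements of the $K$-span with the same valuation properties. You instead descend all the way to a maximally complete model $K_1$ by combining Corollary~\ref{C:descent-for-alg} with Proposition~\ref{P:desc-between-models}, and work directly there. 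This buys you the cleaner endgame---the rewritten factors already live over your base and no definability transfer is needed---at the cost of invoking one extra descent result. Both routes rest on the same core input (Lemma~\ref{C:Cor_from_Lem12.4} together with the remark after Fact~\ref{F:prod-from-stabdom} that the new factors stay in the span of the old ones), so the difference is one of packaging rather than substance.
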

\begin{remark}
\begin{enumerate}
\item Recall that if $F$ is a model of ACVF then $p_1\otimes p_2$ is automatically strictly based on $F$.
\item Note that if $R_1, R_2$ are $F$-algebras and $R_1',R_2'$ torsion-free $\mathcal{O}_F$-subalgebras, respectively, then \[R_1'\otimes_{\mathcal{O}_F} R_2' \hookrightarrow R_1\otimes_{\mathcal{O}_F} R_2.\]
Indeed, for $i=1,2$, $R_i$ and $R_i'$ are $\mathcal{O}_F$-torsion free and hence flat $\mathcal{O}_F$-modules by Fact \ref{F:tors-free}, so we have the following injections:
\[R_1'\otimes_{\mathcal{O}_F} R_2' \hookrightarrow R_1'\otimes_{\mathcal{O}_F} R_2\hookrightarrow R_1\otimes_{\mathcal{O}_F} R_2,\]
thus we may identify $R_1'\otimes_{\mathcal{O}_F} R_2'$ with its image in $R_1\otimes_{\mathcal{O}_F} R_2$.
Notice that 
\[R_1\otimes_{\mathcal{O}_F} R_2
= R_1\otimes_F R_2\] as $\mathcal{O}_F$-algebras. Specifically we identify $F[V_1]^{p_1}\otimes_{\mathcal{O}_F}F[V_2]^{p_2}$ as an $\mathcal{O}_F$-subalgebra of $F[V_1]\otimes_{\mathcal{O}_F}F[V_2]=F[V_1]\otimes_F F[V_2]$.
\end{enumerate} 
\end{remark}
\begin{proof}
The proof is a slight generalization, due to descent, but mostly identical to the an argument given in \cite[Proposition 6.11]{Metastable}. 
If $F$ is trivially valued there is nothing to prove. Otherwise, Let $F\subseteq K$ be a maximally complete algebraically closed valued field extension. 

Let $\sum_{i=1}^m f_i\otimes g_i\in K[V_1\times_K V_2]^{p_1\otimes p_2}$ and let $(c,d)\models (p_1\otimes p_2)|K$. By Proposition \ref{P:Cor_from_Lem12.4}, we may find $f_i^\prime\in K[V_1]$ and $g_i^\prime\in K[V_2]^{p_2,0}$ such that
\[\sum_{i=1}^{m}f_i\otimes g_i=\sum_{i=1}^{m^\prime}f_i^\prime\otimes g_i^\prime\] and
\[0\leq \val \left( \sum_{i=1}^{m^\prime} f_i^\prime (c)\cdot g_i^\prime(d)\right)=\min_i\{\val (f_i^\prime(c))\}.\]
In particular $\sum_{i=1}^{m}f_i\otimes g_i=\sum_{i=1}^{m^\prime}f_i^\prime\otimes g_i^\prime\in K[V_1]^{p_1}\otimes_{\mathcal{O}_K}K[V_2]^{p_2}$.

By Corollary \ref{C:descent-for-alg} and Proposition \ref{P:desc-between-models}

\[F[V_1\times_F V_2]^{p_1\otimes p_2}\otimes_{\mathcal{O}_F}\mathcal{O}_K=\left( F[V_1]^{p_1}\otimes_{\mathcal{O}_F}F[V_2]^{p_2} \right)\otimes_{\mathcal{O}_F}\mathcal{O}_K.\] Since $\mathcal{O}_F\subseteq \mathcal{O}_K$, and $\mathcal{O}_K$ is an integral domain, $\mathcal{O}_K$ is flat over $\mathcal{O}_F$ by Fact \ref{F:tors-free}.
The result follows by faithfully flat descent (see \cite[Proposition 14.51]{gortz}).
\end{proof}

\subsection{$\Phi$ and Model Theoretic Properties}\label{ss:the_functor}
Recall that we denote by $\mathbb{K}$ the valued field sort the monster model $\mathbb{U}$ and that $K$, unless stated otherwise, usually denotes a non-trivially valued algebraically closed field.

By Proposition \ref{P:desc-between-models}, if $(V,p)\in\GVar[K]$ then $\Phi^{\aff}_L(V_L,p)=\Phi^{\aff}_K(V,p)\times_{\mathcal{O}_K}\mathcal{O}_L$ for any algebraically closed valued field $L$ containing $K$. Consequently, it is relatively harmless to drop the subscript $K$ from $\Phi^{\aff}_K$. 

Recall Section \ref{ss:Pro-def-grps}, since $\Phi^{\aff}(V,p)$ is quasi-compact, it is an inverse limit of schemes of finite type over $\mathcal{O}_K$ (necessarily indexed by a small set, with respect to the monster model), $\Phi^{\aff}(V,p)_K:=\Phi^{\aff}(V,p)\times_{\mathcal{O}_K} K$ is a pro-definable set, the emphasis here was that we can take the index set to be small.  
Thus, for $\mathcal{V}:=\Phi^{\aff}(V,p)$, viewing $\mathcal{V}_K$ a pro-definable set, $\mathcal{V}(\mathcal{O})\subseteq \mathcal{V}_K$ is a pro-definable subset. By Proposition \ref{P:geo-prop-functor}, $\mathcal{V}_K\cong V$ as varieties over $K$. Under this isomorphism $p$ is concentrated on $\mathcal{V}_K$. We will identify $p$ with its image under this isomorphism.

In the previous sections we described $\Phi^{\aff}$ (more specifically $\Phi^{\aff}_K$). In the following section, we give some model theoretic properties of $\Phi^{\aff}(V,p)$ and conclude that after restricting the codomain, it is fully faithful.

We will first need the following definition, whose origin can be seen in \cite[Proposition 6.9]{Metastable}.

\begin{definition}\label{D:mmp}
Let $K$ be model of ACVF, $\mathcal{V}$ be an affine scheme over $\mathcal{O}_K$, $p$ a $K$-definable type concentrated on $\mathcal{V}(\mathcal{O})$ (so $\mathcal{V}(\mathcal{O})\neq \emptyset$). We say that $\mathcal{V}$ has \emph{the maximum modulus principle with respect to $p$} (written, the mmp w.r.t. $p$) if for every regular function $f$ on $\mathcal{V}_\mathbb{K}=\mathcal{V}\times_{\mathcal{O}_K}\mathbb{K}$ there is some $\gamma_f\in \Gamma$ such that \[(d_px)(\val (f(x))=\gamma_f)\] and for any $h\in \mathcal{V}(\mathcal{O})$ \[\val (f(h))\geq \gamma_f.\]

If $\mathcal{V}$ is a quasi-compact separated scheme over $\mathcal{O}_K$ (not necessarily affine) then we will say that it has the mmp w.r.t. $p$ if $\mathcal{V}$ has an affine open cover, consisting of schemes over $\mathcal{O}_K$, \[\mathcal{V}=\bigcup_{i=1}^n \mathcal{U}_i\] such that for every $i$ with $\mathcal{U}_i(\mathcal{O})\neq \emptyset$, $p$ is concentrated on $\mathcal{U}_i(\mathcal{O})$ and has the mmp w.r.t. $p$.
\end{definition}

\begin{remarkcnt}\label{R:after mmp}
\begin{enumerate}
\item The two definitions coincide for affine schemes over $\mathcal{O}_K$.
\item By definition, $\mathcal{V}$ has the mmp w.r.t. $p$ if and only if $\mathcal{V}_{\mathcal{O}}=\mathcal{V}\times_{\mathcal{O}_K}\mathcal{O}$ has the mmp w.r.t. $p$.
\item If $\mathcal{V}$ is of finite type over $\mathcal{O}_K$, then $(2)$ would also be true if in the definition we only consider regular functions on $\mathcal{V}_K$.
\item By quantifier elimination in the language $L_{\text{div}}$, every formula is equivalent to a boolean combination of formulas of the sort $\val (f(x))\leq \val (g(x))$, so if $\mathcal{V}$ has the mmp w.r.t. $p$ and w.r.t. $q$ then $p=q$.
\end{enumerate}
\end{remarkcnt}

\begin{lemma}\label{L:implications-of-mmp}
Let $\mathcal{V}$ be a quasi-compact separated integral scheme over $\mathcal{O}_K$ and $p$ a $K$-definable type concentrated on $\mathcal{V}(\mathcal{O})$. If $\mathcal{V}$ has the mmp w.r.t. $p$ then $p$ is generically stable and Zariski dense in $\mathcal{V}_K$. Furthermore, $r_*p$ is Zariski dense in $\mathcal{V}_k$ and hence $\mathcal{V}_k$ is (geometrically-)irreducible, where $r:\mathcal{V}(\mathcal{O})\to \mathcal{V}_k$.
\end{lemma}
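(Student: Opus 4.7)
The plan is to reduce to the affine case and then establish each of the four assertions using the mmp. Since $p$ is a complete type and the cover $\mathcal{V}=\bigcup_i \mathcal{U}_i$ from Definition \ref{D:mmp} witnesses the mmp, $p$ concentrates on some $\mathcal{U}_i(\mathcal{O})$, and this $\mathcal{U}_i$ has the affine mmp with respect to $p$. Integrality of $\mathcal{V}$ makes $(\mathcal{U}_i)_K$ and $(\mathcal{U}_i)_k$ Zariski dense in $\mathcal{V}_K$ and $\mathcal{V}_k$ respectively, so density statements on the affine piece pass up to the whole scheme. Zariski density of $p$ in $\mathcal{V}_K$ is then immediate: if $p$ sat inside the zero locus of a nonzero regular function $f$ on $(\mathcal{U}_i)_K$, then $\val(f(c))=\infty$ for $c\models p$, contradicting the mmp assertion $\val(f(c))=\gamma_f\in\Gamma$.

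For generic stability, which is the main challenge of the proof, I would prove orthogonality of $p$ to $\Gamma$ (equivalent to generic stability in ACVF by the fact recalled in Section \ref{s:prelimin}). The key claim is that for every valued field $B\supseteq K$ and every $c\models p|B$, $\Gamma(B(c))=\Gamma(B)$. Identify each element of $B[c]$ with $f(c)$ for some regular function $f$ on $(\mathcal{U}_i)_B$; the mmp applied with $B$-parameters gives $\val(f(c))=\gamma_f\in\dcl(B)\cap\Gamma=\Gamma(B)$. Taking ratios extends this to $B(c)$, so $\Gamma_{B(c)}\subseteq\Gamma(B)$ and hence $\Gamma(B(c))=\mathbb{Q}\otimes\Gamma_{B(c)}\subseteq\Gamma(B)$, the reverse inclusion being trivial. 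Then for any $B$-definable function $g:\mathcal{V}(\mathcal{O})\to\Gamma$ and $c\models p|B$, the value $g(c)\in\dcl(Bc)\cap\Gamma=\Gamma(B(c))=\Gamma(B)\subseteq\dcl(B)$ is $B$-definable; since all realizations of $p|B$ share a type over $B$, $g(c)$ is the same element for every such $c$, making $g_*p$ a constant type.

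For Zariski density of $r_*p$ in $\mathcal{V}_k$, suppose on the contrary that $r_*p$ is contained in the vanishing locus of a nonzero regular function $g$ on $(\mathcal{U}_i)_k$. Lift $g$ to a regular function $\tilde g$ on $\mathcal{U}_i\times_{\mathcal{O}_K}\mathcal{O}$, which is nonzero on the generic fiber by the faithful flatness of Proposition \ref{P:faith-flat}. For $c\models p$, $\res(\tilde g(c))=g(r(c))=0$, so $\val(\tilde g(c))>0$ and hence $\gamma_{\tilde g}>0$. Since $g$ is nonzero, there is $a\in(\mathcal{U}_i)_k$ with $g(a)\neq 0$, and by Theorem \ref{T:Surjective} some $h\in\mathcal{V}(\mathcal{O})$ has $r(h)=a$; such an $h$ factors through $\mathcal{U}_i$ (because $\Spec\mathcal{O}$ is local and $r(h)$ lies in the open subset $(\mathcal{U}_i)_k$), so $h\in\mathcal{U}_i(\mathcal{O})$ and then $\val(\tilde g(h))=0<\gamma_{\tilde g}$, contradicting the mmp inequality $\val(\tilde g(h))\geq\gamma_{\tilde g}$. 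Finally, irreducibility of $\mathcal{V}_k$ follows because any complete type concentrates on a unique irreducible component of the scheme supporting it, so density of $r_*p$ forces $\mathcal{V}_k$ to have only one such component; the monster's residue field being algebraically closed makes this geometric irreducibility as well.
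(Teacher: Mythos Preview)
Your argument is correct and parallels the paper's: reduce to an affine open, use the mmp to pin down all valuations $\val(f(c))$ and hence obtain orthogonality to $\Gamma$, and invoke Theorem \ref{T:Surjective} to lift a non-vanishing $k$-point for the density of $r_*p$. Two small differences are worth recording. First, the paper compresses your explicit $\Gamma(B(c))=\Gamma(B)$ computation into a one-line appeal to quantifier elimination in $L_{\text{div}}$: since every formula is a boolean combination of conditions $\val(f(x))\leq\val(g(x))$ and the mmp fixes each $\val(f(c))$, any $\Gamma$-valued definable function is constant on $p$. Second, for Zariski density of $p$ in $\mathcal{V}_K$, the paper does not rely on reading ``$\gamma_f\in\Gamma$'' as ``$\gamma_f\neq\infty$''; instead it uses the inequality half of the mmp --- if $f(c)=0$ then $\val(f(h))\geq\infty$ for every $h\in\mathcal{V}(\mathcal{O})$, so $f$ vanishes on $\mathcal{V}(\mathcal{O})$, which is Zariski dense in $\mathcal{V}_K$ by Proposition \ref{P:O-points-are-dense}, forcing $f=0$. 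Your shortcut is cleaner if one takes $\infty\notin\Gamma$; the paper's route is independent of that convention and is what you would need if $\Gamma$ denotes the full sort $\Gamma_\infty$.
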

\begin{proof}
We may assume that $\mathcal{V}$ is affine. Since for every regular function on $f$ on $\mathcal{V}_\mathbb{K}$ there is some $\gamma_f\in \Gamma$ such that 
\[(d_px)(\val (f(x))=\gamma_f),\] by quantifier elimination, $p$ is orthogonal to $\Gamma$ and hence generically stable.

Let $f$ be a regular function on $\mathcal{V}_K$ and let $c\models p|K$. If $f(c)=0$ then by the mmp, $f(h)=0$ for every $h\in \mathcal{V}(\mathcal{O})$. Also, $\mathcal{V}(\mathcal{O})$ is Zariski dense in $\mathcal{V}_K$ by Proposition \ref{P:O-points-are-dense} and thus $f\equiv 0$ on $\mathcal{V}_K$.

As for the furthermore, let $\bar{f}\neq 0$ be a regular function on $\mathcal{V}_k$, $a\in \mathcal{V}_k$ satisfying $\bar{f}(a)\neq 0$, and $c\models p|K$. The regular function $\bar{f}$ arises from some global section on $\mathcal{V}$. By Theorem \ref{T:Surjective}, there exists $b\in \mathcal{V}(\mathcal{O})$ with $r(b)=a$ and consequently, since $\bar{f}(a)\neq 0$, $\val(f(b))=0$. By the mmp w.r.t. $p$, $\val (f(c))=0$ and hence \[\bar{f}(r(c))\neq 0,\] as needed.
\end{proof}

\begin{lemma}\label{L:V_Kto V dominant}
Let $F$ be a valued field and $\mathcal{V}$ be a scheme over $\mathcal{O}_F$. If $\mathcal{V}$ is integral and has an $\mathcal{O}_F$-point then $\mathcal{V}_F\to \mathcal{V}$ is dominant. 
\end{lemma}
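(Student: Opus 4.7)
The plan is to use Proposition \ref{P:faith-flat} together with an elementary specialization argument. Write $f\colon \mathcal{V}\to \Spec \mathcal{O}_F$ for the structure morphism and let $\eta_0=(0)$ denote the generic point of $\Spec \mathcal{O}_F$. Under the natural map $\mathcal{V}_F\to \mathcal{V}$, the image on underlying topological spaces is exactly $f^{-1}(\eta_0)$. Since $\mathcal{V}$ is irreducible with a unique generic point $\eta$, dominance of $\mathcal{V}_F\to \mathcal{V}$ is equivalent to $\eta\in f^{-1}(\eta_0)$, i.e.\ $f(\eta)=\eta_0$. So the whole task reduces to showing that the generic point of $\mathcal{V}$ maps to the generic point of $\Spec \mathcal{O}_F$.

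First, I would invoke Proposition \ref{P:faith-flat}: since $\mathcal{V}$ is irreducible and has an $\mathcal{O}_F$-point, $f$ is faithfully flat, and in particular surjective. Hence there exists $\xi\in \mathcal{V}$ with $f(\xi)=\eta_0$.

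Second, since $\eta$ is the generic point of the irreducible scheme $\mathcal{V}$, we have $\xi\in \overline{\{\eta\}}$. Continuity of $f$ then gives $f(\xi)\in \overline{\{f(\eta)\}}=V(f(\eta))$, so $\eta_0\in V(f(\eta))$, i.e.\ $f(\eta)\subseteq \eta_0=(0)$, which forces $f(\eta)=\eta_0$. Combining with the first step we conclude $\eta\in f^{-1}(\eta_0)$ as required.

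I do not expect any real obstacle here: the argument is essentially a packaging of Proposition \ref{P:faith-flat} with the standard fact that a surjective morphism from an irreducible scheme sends its generic point to a generization of any point in the image, together with the observation that the only generization of $\eta_0$ in $\Spec \mathcal{O}_F$ is $\eta_0$ itself. The one thing to be a little careful about is that $\Spec \mathcal{O}_F$ may have many primes (so $\eta_0$ is not an isolated point in general), but this does not affect the specialization argument.
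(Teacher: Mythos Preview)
Your proof is correct. Both your argument and the paper's rest on Proposition \ref{P:faith-flat}, but they use different consequences of it. The paper extracts \emph{flatness} of $\mathcal{V}\to\Spec\mathcal{O}_F$ and then invokes the general fact that a quasi-compact dominant morphism (here $\Spec F\to\Spec\mathcal{O}_F$) stays dominant after flat base change. You instead extract \emph{surjectivity} (faithful flatness) and run a direct specialization argument: surjectivity produces a point $\xi$ over $\eta_0$, and since the generic point $\eta$ of $\mathcal{V}$ generizes $\xi$, continuity forces $f(\eta)=\eta_0$. Your route is more self-contained, as it avoids citing the base-change stability result; the paper's is a one-liner once that result is available. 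As a minor remark, the equivalence you state (dominance of $\mathcal{V}_F\to\mathcal{V}$ iff $\eta\in f^{-1}(\eta_0)$) really is an equivalence here and not just an implication, since $f^{-1}(\eta_0)$ is stable under generization; your second step is precisely the verification of this.
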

\begin{proof}
Since $\Spec F\to \Spec \mathcal{O}_F$ is quasi-compact and dominant, and since, by Proposition \ref{P:faith-flat}, $\mathcal{V}$ is flat over $\mathcal{O}_F$,  we may base-change (see \cite[Exercise 14.14]{gortz}).
\end{proof}

\begin{proposition}\label{P:mmp iff for affine cover}
Let $\mathcal{V}$ be a quasi-compact separated integral scheme over $\mathcal{O}_K$ with an $\mathcal{O}$-point and $p$ a $K$-definable type concentrated on $\mathcal{V}(\mathcal{O})$. Then the following are equivalent:
\begin{enumerate}
\item $\mathcal{V}$ has the mmp w.r.t. $p$;
\item for every basic open affine subscheme $\mathcal{U}\subseteq \mathcal{V}_\mathcal{O}:=\mathcal{V}\times_{\mathcal{O}_K}\mathcal{O}$ with $\mathcal{U}(\mathcal{O})$ non-empty, $p$ is concentrated on $\mathcal{U}(\mathcal{O})$ and $\mathcal{U}$ has the mmp w.r.t. $p$;
\item for every open subscheme $\mathcal{U}\subseteq \mathcal{V}_{\mathcal{O}}$ with $\mathcal{U}(\mathcal{O})$ non-empty, $p$ is concentrated on $\mathcal{U}(\mathcal{O})$ and $\mathcal{U}$ has the mmp w.r.t. $p$.
\end{enumerate}
\end{proposition}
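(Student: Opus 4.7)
The implications $(3) \Rightarrow (2) \Rightarrow (1)$ should be essentially formal. Every basic open affine is an open subscheme, giving $(3)\Rightarrow (2)$; and an affine open $\mathcal{U}$ of $\mathcal{V}_\mathcal{O}$ is itself a trivial basic open affine (namely $\mathcal{U}=D(1)$ in $\mathcal{U}$), so covering $\mathcal{V}_\mathcal{O}$ by the base changes of an affine open cover of $\mathcal{V}$ and invoking Remark~\ref{R:after mmp}(2) derives $(1)$ from $(2)$. The content of the proposition is therefore the implication $(1) \Rightarrow (2)$, and the plan is to isolate two steps: concentration of $p$ on $\mathcal{U}(\mathcal{O})$, and then the mmp on $\mathcal{U}$.

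Fix an affine open cover $\mathcal{V}=\bigcup_i \mathcal{U}_i$ witnessing $(1)$ and let $\mathcal{U}=D(g)\subseteq \mathcal{W}$ be a basic open affine of $\mathcal{V}_\mathcal{O}$ with $\mathcal{U}(\mathcal{O})\neq\emptyset$. To show $p$ concentrates on $\mathcal{U}(\mathcal{O})$, I would invoke Lemma~\ref{L:implications-of-mmp}, which gives that $\mathcal{V}_k$ is irreducible and $r_*p$ is Zariski dense in it. The residue map sends any $\mathcal{O}$-point of $\mathcal{U}$ to a $k$-point of $\mathcal{U}_k$, so $\mathcal{U}_k$ is a nonempty open of the irreducible $\mathcal{V}_k$; Zariski density of $r_*p$ then forces $r(c)\in \mathcal{U}_k(k)$ for any $c\models p|K$. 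Because $\mathcal{U}$ is open and the generic point of $\Spec\mathcal{O}$ specializes to the closed one, the morphism $c: \Spec\mathcal{O}\to \mathcal{V}_\mathcal{O}$ automatically factors through $\mathcal{U}$ as soon as its closed fibre lands in $\mathcal{U}$, giving $c\in \mathcal{U}(\mathcal{O})$.

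For the mmp on $\mathcal{U}$, let $f$ be a regular function on $\mathcal{U}_\mathbb{K}$. Since $p$ is generically stable by Lemma~\ref{L:implications-of-mmp} and hence orthogonal to $\Gamma$, $\val(f(x))$ takes a constant value $\gamma_f$ on $p$; the remaining task is to show $\val(f(h))\geq \gamma_f$ for every $h\in\mathcal{U}(\mathcal{O})$. Given such $h$, the point $r(h)$ lies in some $(\mathcal{U}_i)_k$, so the same specialization argument puts $h$ in $\mathcal{U}_i(\mathcal{O})\cap \mathcal{U}(\mathcal{O})$; in particular $\mathcal{U}_i(\mathcal{O})\neq\emptyset$, so by $(1)$, $\mathcal{U}_i$ has the mmp w.r.t.~$p$ and $p$ concentrates on $\mathcal{U}_i(\mathcal{O})$. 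Covering the open $\mathcal{U}\cap (\mathcal{U}_i)_\mathcal{O}$ of $(\mathcal{U}_i)_\mathcal{O}$ by basic opens $D(g_{i,l})$, the point $h$ factors through some $D(g_{i,l_0})$, hence $\val(g_{i,l_0}(h))=0$, and $f$ restricts to $F/g_{i,l_0}^n$ on $D(g_{i,l_0})_\mathbb{K}$ for some $F\in \mathbb{K}[\mathcal{U}_i]$. Applying the mmp of $\mathcal{U}_i$ to $F$ and to $g_{i,l_0}$, and noting that $\gamma_{g_{i,l_0}}:=\val(g_{i,l_0}(c))\geq 0$ for $c\models p|K$ (since $c\in \mathcal{U}_i(\mathcal{O})$), one computes
\[\val(f(h)) \;=\; \val(F(h)) \;\geq\; \gamma_F \;=\; \gamma_f + n\gamma_{g_{i,l_0}} \;\geq\; \gamma_f.\]

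The main obstacle is coordinating these local data across the cover: distinct $h\in \mathcal{U}(\mathcal{O})$ may factor through different $\mathcal{U}_i$'s, yielding different choices of $F$ and $g_{i,l_0}$ and different intermediate constants $\gamma_F, \gamma_{g_{i,l_0}}$. What rescues the argument is that $\gamma_f$ is defined intrinsically from the type $p$ and is therefore independent of $i$, so each local bound provided by the mmp on $\mathcal{U}_i$ lies above the same common value.
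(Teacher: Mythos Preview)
Your argument is correct, and the mmp step for $\mathcal{U}$ is essentially the paper's computation carried out against an arbitrary affine cover rather than after first reducing to affine $\mathcal{V}$. One small logical gap: you only close the loop up to $(1)\Leftrightarrow(2)$ and $(3)\Rightarrow(2)$, without saying why $(2)\Rightarrow(3)$. This is immediate (cover an arbitrary open $\mathcal{U}$ by basic opens and apply the definition of mmp; concentration of $p$ on $\mathcal{U}(\mathcal{O})$ follows from concentration on any one basic open piece), and the paper makes exactly this reduction explicit before turning to $(1)\Rightarrow(2)$.

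Where your route genuinely differs from the paper is the concentration step. The paper reduces to affine $\mathcal{V}$ and argues directly with the mmp: if $D_\mathcal{V}(f)(\mathcal{O})\neq\emptyset$, pick $a$ there so that $\val(f(a))=0$; since $f$ is a global section of $\mathcal{V}$ over $\mathcal{O}_K$ and $c\models p$ is an $\mathcal{O}$-point, one has $\gamma_f\geq 0$, while the mmp gives $\gamma_f\leq\val(f(a))=0$, so $\gamma_f=0$ and $p$ lands in $D_\mathcal{V}(f)(\mathcal{O})$. You instead pass through the special fibre via Lemma~\ref{L:implications-of-mmp}: $\mathcal{V}_k$ is irreducible with $r_*p$ Zariski dense, so $r(c)$ must lie in the nonempty open $\mathcal{U}_k$, and then locality of $\Spec\mathcal{O}$ forces $c$ through $\mathcal{U}$. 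The paper's argument is more self-contained (it avoids the surjectivity of $r$ from Theorem~\ref{T:Surjective}, which underlies the furthermore clause of Lemma~\ref{L:implications-of-mmp}); your argument has the advantage of working uniformly for any open $\mathcal{U}$, not just basic ones, and of making the role of the special fibre visible.
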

\begin{remark}
As in Remark \ref{R:after mmp}, if $\mathcal{V}$ is of finite type over $\mathcal{O}_K$ one may restrict to open subschemes of $\mathcal{V}$.
\end{remark}
\begin{proof}
By Remark \ref{R:after mmp}(2), we may replace $\mathcal{V}$ by $\mathcal{V}_\mathcal{O}$.

$(3)\implies (2)$: This is straightforward and follows for the definition.

$(2)\implies (1)$: We may assume that $\mathcal{V}$ is affine. Let $f$ be a regular function on $\mathcal{V}_\mathbb{K}$, and let $a\in \mathcal{V}(\mathcal{O})$ and thus there exists a basic open affine subscheme $D_{\mathcal{V}}(g)\subseteq \mathcal{V}$ with $a\in D_{\mathcal{V}}(g)(\mathcal{O})$. In particular $\val(g(a))=0$. By Lemma \ref{L:V_Kto V dominant}, seeing $f$ as regular function on $D_{\mathcal{V}}(g)_\mathbb{K}$,
\[(d_px)(\val(f(a))\geq val(f(x)),\] and this is also true if we consider $f$ as a regular function on all of $\mathcal{V}_\mathbb{K}$.

$(1)\implies (3)$: By applying direction $(2)\implies (1)$ on $\mathcal{U}$, it is enough to show $(2)$. Thus, we may assume that $\mathcal{V}$ is affine.
Let $D_{\mathcal{V}}(f)\subseteq \mathcal{V}$ with $D_{\mathcal{V}}(f)(\mathcal{O})$ non-empty.

Note that $D_{\mathcal{V}}(f)_\mathbb{K}$ is a basic open subset of $\mathcal{V}_\mathbb{K}$, and by Lemma \ref{L:V_Kto V dominant}, every regular function on it has the form $h/f^k$ for some $h$ regular on $\mathcal{V}_\mathbb{K}$. There exists $a\in D_{\mathcal{V}}(f)(\mathcal{O})$, in particular $\val (f^k(a))=0$ for every $k\in\mathbb{N}$. Since $a\in \mathcal{V}(\mathcal{O})$ as well, by the maximum modulus principle it is also true that $(d_px)(\val (f(x))=0)$, hence $p$ is concentrated on $D_{\mathcal{V}}(f)(\mathcal{O})$. Similarly if \[(d_px)(\val (h(x)/f^k(x))=\alpha)\] then \[\val (h(a)/f^k(a))\geq \alpha.\] Indeed, since the denominators have zero valuations, it follows from the maximum modulus principle for $\mathcal{V}$.
\end{proof}

\begin{example}\label{E:GL_n-mmp}
Denote by $\mathrm{GL}_n$ the (group-)scheme of invertible matrices over $\mathcal{O}_K$. Consider it as a definable subset of $\mathbb{K}^{n^2+1}$. 
Set $p:=\varphi_*p_\mathcal{O}^{n^2}$, where $\varphi$ is the definable map \[\bar{x}\mapsto (\bar{x},\frac{1}{\det(\bar{x})}).\] Obviously $p$ is concentrated on $\mathrm{GL}_n(\mathcal{O})$ and using Example \ref{E:p_O-minimal-val} one easily sees that $\mathrm{GL}_n$ has the mmp w.r.t. $p$.
\end{example}

We conclude with a summary of some properties of $\Phi$.

\begin{lemma}\label{L:same-type-iff-dense}
Let $(V,p)\in \aGVar[K]$ and $q$ be a generically stable $K$-definable type concentrated on $V$. Then $q$ is concentrated on every $D_{\mathcal{V}}(f)(\mathcal{O})$, where $\mathcal{V}=\Phi_K^{\aff}(V,p)$ and $D_{\mathcal{V}}(f)(\mathcal{O})\neq \emptyset$, if and only if $p=q$.
\end{lemma}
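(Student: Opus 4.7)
The plan is to prove the two directions separately.

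\emph{Forward direction ($p=q$).} If $p=q$ then the statement reduces to showing that $p$ is itself concentrated on every affine open $\mathcal{U}\subseteq\Phi_K(V,p)$ with $\mathcal{U}(\mathcal{O})\neq\emptyset$. This follows from the maximum modulus principle enjoyed by $\Phi_K(V,p)$ with respect to $p$ (a property of the image of $\Phi$ established earlier in this section) together with Proposition~\ref{P:mmp iff for affine cover}(3).

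\emph{Reverse direction.} I would choose an affine cover $V=\bigcup_i V_i$, inducing the cover $\Phi_K(V,p)=\bigcup_i\mathcal{V}_i$ with $\mathcal{V}_i=\Spec K[V_i]^p$. Since $V$ is irreducible and $p$ is Zariski dense, every realization of $p|K$ lies in each $V_i$ and hence in $\mathcal{V}_i(\mathcal{O})$, so each $\mathcal{V}_i(\mathcal{O})$ is nonempty. The hypothesis then forces $q$ to be concentrated on every $\mathcal{V}_i(\mathcal{O})$, and hence on each $V_i$. Furthermore, for any $f\in K[V_i]^{p,0}$ the basic open $D_{\mathcal{V}_i}(f)$ is an affine open of $\Phi_K(V,p)$ containing every realization of $p|K$ as an $\mathcal{O}$-point (such a realization satisfies $\val(f)=0$), so applying the hypothesis to this open yields $(d_qx)(\val(f(x))=0)$. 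Thus $p$ and $q$ assign the same value to $\val(f)$ for every $f\in K[V_i]^{p,0}$.

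To upgrade from $K[V_i]^{p,0}$ to all of $K[V_i]$, one uses that $p$ is strictly based on $K$: for $f\in K[V_i]$ the value $\gamma_f:=\val(f(a))$ (for $a\models p|K$) lies in $\Gamma_K$, so there is $c\in K^{\times}$ with $\val(c)=\gamma_f$; then $c^{-1}f\in K[V_i]^{p,0}$, and the previous paragraph gives $(d_qx)(\val(c^{-1}f(x))=0)$, i.e.\ $\val(f(x))$ takes the constant value $\gamma_f$ under both $p$ and $q$. Since quantifier elimination in $L_{\text{div}}$ reduces a type on the affine variety $V_i$ to the $\val$-ordering on its regular functions, this forces $p$ and $q$ to induce the same type on $V_i$; as both concentrate on $V_i$, we conclude $p=q$ as types on $V$. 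The main obstacle is the forward direction's reliance on the maximum modulus principle for $\Phi_K(V,p)$; given that ingredient, the reverse is a clean affine-by-affine comparison that leverages strictness together with quantifier elimination.
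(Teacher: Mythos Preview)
Your reverse direction is essentially the paper's argument: normalize a regular function $f$ on an affine open by some $c\in K^\times$ so that $c^{-1}f\in K[V_i]^{p,0}$, observe that the basic open $D_{\mathcal{V}_i}(c^{-1}f)$ then has a nonempty $\mathcal{O}$-point set (a realization of $p$ lands there), apply the hypothesis to $q$, and conclude by quantifier elimination in $L_{\text{div}}$. Your write-up is slightly more detailed but the content is identical.

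The forward direction, however, has an ordering problem. You invoke the maximum modulus principle for $\Phi_K(V,p)$ as something ``established earlier in this section'', but in the paper this is Proposition~\ref{P:properties of Phi(V,p)}(3), which comes \emph{after} Lemma~\ref{L:same-type-iff-dense}; indeed the paper explicitly says the proof of that item is ``essentially contained in the proof of Lemma~\ref{L:same-type-iff-dense}''. So you are forward-referencing a result whose proof, in the paper's layout, recycles the very argument needed here. This is not a logical circularity (the mmp can be proved independently), but it means your proof as written does not stand on its own at this point in the paper.

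The remedy is to unfold the mmp argument directly, which is exactly what the paper does: reduce to $V$ affine, take a basic open $D_{\mathcal{V}}(f)\subseteq\mathcal{U}$ with $f\in K[V]^p$ and an $\mathcal{O}$-point $a$ (so $\val(f(a))=0$); for $b\models p|K$ pick $c\in K$ with $\val(c^{-1}f(b))=0$, so $c^{-1}f\in K[V]^p$ and hence $\val(c^{-1}f(a))\geq 0$, giving $\val(f(b))\leq\val(f(a))=0$; combined with $f\in K[V]^p$ this forces $\val(f(b))=0$, i.e.\ $p$ is concentrated on $D_{\mathcal{V}}(f)(\mathcal{O})$. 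Once you replace your citation with this short computation, your proof matches the paper's.
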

\begin{proof}
We will first show that $p$ is concentrated on each such $D_{\mathcal{V}}(f)(\mathcal{O})$ where $f\in K[V]^p$, with $D_{\mathcal{V}}(f)(\mathcal{O})\neq\emptyset$. By the latter assumption, there exists $a\in\mathcal{V}(\mathcal{O})$ with $\val (f(a))=0$. Consequently, if $b\models p|K$ and $c\in K$ is such that $\val(c^{-1}f(b))=0$ then, since $c^{-1}f\in K[V]^p$, $\val(c^{-1}f(a))\geq 0$ and so $\val(f(b))=0$ which gives that $p$ is concentrated on $D_{\mathcal{V}}(f)(\mathcal{O})$.

For the other direction, let$f$ be a non-zero regular function on $V$ and $c\in K$ such that \[(d_px)(\val (c^{-1}f(x))=0).\] By assumption we also have \[(d_qx)(\val (c^{-1}f(x))=0).\]
The result, now, follows from quantifier elimination in the language $L_{\text{div}}$, since in this language every formula is equivalent to a boolean combination of formulas of the sort $\val ((f(x))\leq \val ((g(x))$.
\end{proof}

\begin{proposition}\label{P:properties of Phi(V,p)}
Let $(V,p)\in \aGVar[K]$ and $\mathcal{V}=\Phi^{\aff}(V,p)$. Then $\mathcal{V}$ and $p$ enjoy the following properties
\begin{enumerate}
\item $\mathcal{V}$ is quasi-compact separated, integral and flat over $\mathcal{O}_K$.
\item $\mathcal{V}\times_{\mathcal{O}_K}K=V$.
\item $\mathcal{V}_{\mathcal{O}_L}:=\mathcal{V}\times_{\mathcal{O}_K}\mathcal{O}_L$ has the maximum modulus principle w.r.t. $p$, for any model $L$ containing $K$, in particular $p$ is concentrated on $\mathcal{V}(\mathcal{O})$.
\item Let $r:\mathcal{V}(\mathcal{O})\to \mathcal{V}_k$. Then $r_*p$ is Zariski dense in $\mathcal{V}_k$ and hence $\mathcal{V}_k$ is (geometrically-)irreducible.
\end{enumerate}
\end{proposition}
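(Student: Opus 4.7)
\emph{Proof plan.} Parts (1), (2), and (4) can be dispatched quickly using results already in hand, and the real work is verifying the maximum modulus principle in (3).

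Parts (1) and (2) are essentially already proved: Proposition \ref{P:functor exists} gives that $\mathcal{V}$ is quasi-compact, integral, flat over $\mathcal{O}_F$ and satisfies $\mathcal{V}\times_{\mathcal{O}_F}F=V$, while Proposition \ref{P:over-model-pres-separt} (which uses precisely the good-base hypothesis on $F$) gives separatedness. For (4), once (3) is in hand, Lemma \ref{L:implications-of-mmp} applied to $\mathcal{V}_{\mathcal{O}_K}$ for any model $K\supseteq F$ immediately yields that $r_\ast p$ is Zariski dense in $\mathcal{V}_k$, whence $\mathcal{V}_k$ is geometrically irreducible.

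To prove (3), fix a model $K\supseteq F$. By Corollary \ref{C:descent-for-alg} together with Proposition \ref{P:desc-between-models} we have $\mathcal{V}_{\mathcal{O}_K}=\Phi_K(V_K,p)$, so we may replace $F$ by $K$ and assume we are working over a model throughout. Pick an affine open cover $V=\bigcup_i V_i$; since $V$ is irreducible and $p$ is Zariski dense, completeness of $p$ forces it to concentrate on each $V_i$, giving an induced cover $\mathcal{V}=\bigcup_i \mathcal{U}_i$ with $\mathcal{U}_i=\Spec K[V_i]^p$. By Definition \ref{D:mmp} it suffices to check: (a) $p$ is concentrated on each $\mathcal{U}_i(\mathcal{O})$ that is nonempty; and (b) each such $\mathcal{U}_i$ satisfies the affine mmp w.r.t. $p$.

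Property (a) is the direct content of Lemma \ref{L:same-type-iff-dense} applied with $q=p$. For (b), let $f\in\mathbb{K}[V_i]$ be a regular function on $(\mathcal{U}_i)_\mathbb{K}$ and $c\models p\mid\mathbb{K}$. Orthogonality of $p$ to $\Gamma$, combined with $\mathbb{K}$-definability of $p$ and divisibility of $\Gamma_\mathbb{K}$, shows that $\gamma_f:=\val(f(c))\in\Gamma_\mathbb{K}$ is a well-defined constant, so $(d_px)(\val(f(x))=\gamma_f)$ holds. Choose $a\in\mathbb{K}$ with $\val(a)=\gamma_f$; then $a^{-1}f$ lies in $\mathbb{K}[V_i]^p=K[V_i]^p\otimes_{\mathcal{O}_K}\mathcal{O}_\mathbb{K}$ by Proposition \ref{P:desc-between-models}, so any $h\in\mathcal{U}_i(\mathcal{O})$, being an $\mathcal{O}_\mathbb{K}$-algebra map $\mathbb{K}[V_i]^p\to\mathcal{O}$, sends $a^{-1}f$ into $\mathcal{O}$, whence $\val(f(h))\geq\gamma_f$ as required. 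The only real subtlety is the base-change bookkeeping between $\mathcal{O}_F$, $\mathcal{O}_K$, and $\mathcal{O}_\mathbb{K}$, which is exactly what the descent machinery of Section \ref{ss:Descent} is designed to absorb.
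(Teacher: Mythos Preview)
Your proof is correct and follows essentially the same route as the paper: parts (1), (2), (4) are referenced out to Propositions \ref{P:functor exists}, \ref{P:over-model-pres-separt}, and Lemma \ref{L:implications-of-mmp} respectively, and for (3) you reduce to a model, pass to an affine, normalize a regular function $f$ by an element of the base so that $a^{-1}f\in K[V]^p$, and read off $\val(f(h))\ge\gamma_f$ from the definition of the $\mathcal{O}$-points. The only cosmetic differences are that the paper invokes Proposition \ref{P:mmp iff for affine cover} to reduce to the affine case (rather than checking the cover directly via Definition \ref{D:mmp}), and argues $p\in\mathcal{V}(\mathcal{O})$ straight from the definition of $K[V]^p$ rather than citing Lemma \ref{L:same-type-iff-dense}; the underlying argument is the same.
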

\begin{proof}
\begin{enumerate}
\item See Proposition \ref{P:geo-prop-functor}.
\item See Proposition \ref{P:geo-prop-functor}.

\item After base-changing, we may replace $V$ by $V_{\mathbb{K}}$ and so $\mathcal{V}$ by $\mathcal{V}_\mathcal{O}$. By definition for any $a\models p|L$ and $f\in L[V]^p$, where $L$ is small model, $\val (f(a))\geq 0$. Consequently $p$ is concentrated on $\mathcal{V}(\mathcal{O})$.

Let $f$ be a regular function on $V=\mathcal{V}_\mathbb{K}$ and $L$ a small model over which $f$ is defined. Since $p$ is generically stable and $L$ is a model,
\[(d_px)(\val (f(x))=\val(c))\]
for some $c\in L$.

As a result, $c^{-1}f\in L[V]^p$, and so by definition, for every $h\in \mathcal{V}(\mathcal{O})$, $\val (c^{-1}f(h))\geq 0$ and thus
\[\val (f(h))\geq \val(c).\]

\item Lemma \ref{L:implications-of-mmp}.
%

\end{enumerate}
\end{proof}

%
%

We conclude this section by showing that for affine varieties, after restricting the codomain, $\Phi_K$ is fully faithful.

\begin{remark} 
Note that we really need to restrict the codomain, i.e. the functor $\Phi_K$ on affine varieties is not full. Recall that $\Phi_K(\mathbb{A}^1_K,p_{\mathcal{O}})=\mathbb{A}^1_{\mathcal{O}}$ and consider the morphism $\mathbb{A}^1_{\mathcal{O}}\to \mathbb{A}^1_{\mathcal{O}}$ given by $x\mapsto a\cdot x$ with $\val (a)>0$, it does not preserve $p_{\mathcal{O}}$.
\end{remark}

\begin{lemma}
Let $(V_1,p_1),(V_2,p_2)\in \aGVar[K]$, $f:(V_1,p_1)\to (V_2,p_2)$ a morphism, and $\mathcal{V}_i=\Phi^{\aff}_K(V_i,p_i)$, for $i=1,2$. Then $\Phi^{\aff}_K(f)$ is residually dominant, i.e. $\Phi^{\aff}_K(f)_{k_K}:(\mathcal{V}_1)_{k_K}\to (\mathcal{V}_2)_{k_K}$  is dominant.
\end{lemma}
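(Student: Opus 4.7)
The plan is to reduce residual dominance of $\Phi_K(f)$ to the Zariski density of $r_{2,*}p_2$ in the special fibre $(\mathcal{V}_2)_{k_K}$, which is already supplied by Proposition \ref{P:properties of Phi(V,p)}(4). The only extra input needed is that $\Phi_K(f)$ commutes with the residue maps on $\mathcal{O}$-points, which is a purely functorial statement.

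Concretely, since $\Phi_K(f):\mathcal{V}_1\to \mathcal{V}_2$ is a morphism of schemes over $\mathcal{O}_K$, it restricts to a map $\mathcal{V}_1(\mathcal{O})\to \mathcal{V}_2(\mathcal{O})$, and base-changing by $\mathcal{O}_K\to k_K$ yields $\Phi_K(f)_{k_K}:(\mathcal{V}_1)_{k_K}\to (\mathcal{V}_2)_{k_K}$. By functoriality, the square formed with the residue maps $r_i:\mathcal{V}_i(\mathcal{O})\to (\mathcal{V}_i)_{k_K}(k)$ commutes, i.e.\ $r_2\circ \Phi_K(f)=\Phi_K(f)_{k_K}\circ r_1$ on $\mathcal{O}$-points.

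Now pick a realization $a\models p_1|K$. By Proposition \ref{P:properties of Phi(V,p)}(3), $p_1$ is concentrated on $\mathcal{V}_1(\mathcal{O})$, so $a\in \mathcal{V}_1(\mathcal{O})$. Since $f$ is a morphism in $\GVar[K]$, $f_*p_1=p_2$, hence $\Phi_K(f)(a)\models p_2|K$ and lies in $\mathcal{V}_2(\mathcal{O})$. Applying the residue maps and using commutativity of the square above,
\[
\Phi_K(f)_{k_K}(r_1(a))\;=\;r_2(\Phi_K(f)(a))\;\models\; r_{2,*}p_2|K.
\]
By Proposition \ref{P:properties of Phi(V,p)}(4), $r_{2,*}p_2$ is Zariski dense in $(\mathcal{V}_2)_{k_K}$, so the point $\Phi_K(f)_{k_K}(r_1(a))$ is not contained in any proper Zariski closed subscheme of $(\mathcal{V}_2)_{k_K}$ defined over $K$. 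In particular it cannot sit in the scheme-theoretic image of $\Phi_K(f)_{k_K}$ unless that image is all of $(\mathcal{V}_2)_{k_K}$, so $\Phi_K(f)_{k_K}$ is dominant. There is no real obstacle here; the argument is essentially a diagram chase combined with one previously established density property.
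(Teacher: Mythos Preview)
Your argument is correct and follows essentially the same route as the paper: both set up the commutative square relating $\Phi_K(f)$, $\Phi_K(f)_{k_K}$, and the residue maps $r_i$, then use that $r_{2,*}p_2$ is Zariski dense in $(\mathcal{V}_2)_k$ (Proposition~\ref{P:properties of Phi(V,p)}(4)) to conclude dominance. One small wording issue: the phrase ``defined over $K$'' should read ``defined over $k_K$'', and the passage from density in $(\mathcal{V}_2)_k$ (over the monster residue field) to dominance of $\Phi_K(f)_{k_K}$ (over $k_K$) is a faithfully-flat descent step that the paper makes explicit and you leave implicit---it goes through because $(\mathcal{V}_2)_{k_K}$ is geometrically irreducible.
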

\begin{proof}
By definition the following commutes
\[\xymatrix
{
\mathcal{V}_1(\mathcal{O})\ar[r]\ar[d]^{r_1} & \mathcal{V}_2(\mathcal{O})\ar[d]^{r_2}\\
(\mathcal{V}_1)_k\ar[r]^{\Phi^{\aff}(f)_k} & (\mathcal{V}_2)_k
}
\]
Since, by Proposition \ref{P:prod-two-affines}(4), $(r_i)_*p_i$ is Zariski dense in $(\mathcal{V}_i)_k$, for $i=1,2$, $\Phi^{\aff}(f)_k$ is dominant. By faithfully flat descent (see \cite[Appendix C]{gortz}), $\Phi^{\aff}(f)_{k_K}$ is also dominant.
\end{proof}

This leads to the following definition.

\begin{definition}
Let $\aRdSch[K]$ be the category of affine schemes over $\mathcal{O}_K$ with residually dominant morphisms. I.e. the objects are affine schemes over $\mathcal{O}_K$ and morphisms are morphisms $f:\mathcal{V}\to\mathcal{W}$ over $\mathcal{O}_K$ with $f_{k_K}:\mathcal{V}_{k_K}\to \mathcal{W}_{k_K}$ dominant.  
\end{definition}

\begin{proposition}\label{C:fully-faithful}
After restricting the codomain of $\Phi^{\aff}_K$ to $\aRdSch[K]$, $\Phi^{\aff}_K$ is fully faithful.
\end{proposition}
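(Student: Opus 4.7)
The plan is to verify the two conditions for fully faithful separately. For faithfulness, I would invoke Proposition \ref{P:functor exists}, which gives $\Phi_K(V_i,p_i)\times_{\mathcal{O}_K}K\cong V_i$; inspection of the construction of $\Phi_K$ on morphisms shows that $\Phi_K(f)_K=f$, so if $\Phi_K(f_1)=\Phi_K(f_2)$ we recover $f_1=f_2$ simply by base change to $K$.

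For fullness, let $g:\mathcal{V}_1\to\mathcal{V}_2$ be a residually dominant morphism, where $\mathcal{V}_i:=\Phi_K(V_i,p_i)$, and set $f:=g_K:V_1\to V_2$. I first want to show $f$ is in fact a morphism in $\GVar[K]$, i.e.\ that $f_*p_1=p_2$. The pushforward $f_*p_1$ is a generically stable $K$-definable type concentrated on $V_2$, so by Lemma \ref{L:same-type-iff-dense} it suffices to verify that $f_*p_1$ is concentrated on $\mathcal{U}(\mathcal{O})$ for every affine open subscheme $\mathcal{U}\subseteq\mathcal{V}_2$ with $\mathcal{U}(\mathcal{O})\neq\emptyset$. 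For such $\mathcal{U}$, its special fibre $\mathcal{U}_k$ is a non-empty open in the irreducible scheme $\mathcal{V}_{2,k}$ (Proposition \ref{P:properties of Phi(V,p)}(4)), hence dense; residual dominance of $g$ then makes $g_k^{-1}(\mathcal{U}_k)$ a non-empty open in the irreducible $\mathcal{V}_{1,k}$, and Zariski density of $r_{1,*}p_1$ (Proposition \ref{P:properties of Phi(V,p)}(4)) forces $r_1(a)\in g_k^{-1}(\mathcal{U}_k)$ for any $a\models p_1|K$. Commutativity of $r_2\circ g=g_k\circ r_1$ then gives $r_2(g(a))\in\mathcal{U}_k$; the morphism $\Spec\mathcal{O}\to\mathcal{V}_2$ corresponding to $g(a)$ thus sends the closed point into the open subscheme $\mathcal{U}$, and since the preimage of $\mathcal{U}$ is an open subset of $\Spec\mathcal{O}$ containing the closed point, it is all of $\Spec\mathcal{O}$. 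Hence $g(a)$ factors through $\mathcal{U}$, giving $g(a)\in\mathcal{U}(\mathcal{O})$ as required.

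It then remains to check that $\Phi_K(f)=g$. Both are morphisms $\mathcal{V}_1\to\mathcal{V}_2$ over $\mathcal{O}_K$ whose base changes to $K$ coincide with $f$. Since $\mathcal{V}_1$ is integral and flat over $\mathcal{O}_K$ (Proposition \ref{P:properties of Phi(V,p)}(1)), for every affine open $\Spec A\subseteq\mathcal{V}_1$ the natural map $A\hookrightarrow A\otimes_{\mathcal{O}_K}K$ is injective, so a morphism out of $\mathcal{V}_1$ to any separated $\mathcal{O}_K$-scheme is determined by its generic fibre. This forces $\Phi_K(f)=g$ and completes the argument.

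The main obstacle is the first step of fullness: translating the geometric hypothesis of residual dominance of $g$ into the model-theoretic statement that $f_*p_1=p_2$. The key mechanism is the specialization square combined with Zariski density of the residue push-forward on both sides, and the role of Lemma \ref{L:same-type-iff-dense} is to let us reduce to checking concentration on affine opens, thereby sidestepping having to work directly with the (possibly non-finite-type) pro-definable structure of $\Phi_K(V_2,p_2)$.
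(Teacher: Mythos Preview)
Your proof is correct and follows essentially the same route as the paper's: faithfulness via $\Phi_K(f)_K=f$, and fullness by setting $f=g_K$ and invoking Lemma~\ref{L:same-type-iff-dense} together with the specialization square to show $f_*p_1=p_2$. Two small differences are worth noting: where the paper uses surjectivity of $r$ (Theorem~\ref{T:Surjective}) to produce a non-empty $\mathcal{O}$-point set and then appeals to the first half of Lemma~\ref{L:same-type-iff-dense}, you use Zariski density of $r_{1,*}p_1$ directly, which is slightly cleaner; and you explicitly verify $\Phi_K(f)=g$ via injectivity of $A\hookrightarrow A\otimes_{\mathcal{O}_K}K$, a step the paper leaves implicit but which is indeed needed for fullness and follows from reducedness of $\mathcal{V}_1$, density of its generic fibre, and separatedness of $\mathcal{V}_2$.
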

\begin{proof}
Faithfulness is straight forward, indeed for every $(V_1,p_1),(V_2,p_2)\in\aGVar[K]$ and morphism $f:(V_1,p_1)\to (V_2,p_2)$, if we denote by $\Phi^{\aff}_K(f)$ the morphism $\Phi^{\aff}_K(V_1,p_1)\to \Phi_K(V_2,p_2)$ then by construction $(\Phi^{\aff}_K(f))_K=f$.

As for fullness, let $(V_1,p_1),(V_2,p_2)\in \aGVar[K]$ and $\Theta:\mathcal{V}_1\to \mathcal{V}_2$ a morphism, where $\mathcal{V}_i=\Phi^{\aff}(V_i,p_i)$ for $i=1,2$. 

Consider the following commutative diagram
\[
\xymatrix {
\mathcal{V}_1(\mathcal{O})\ar[r]^\Theta \ar[d]^r & \mathcal{V}_2(\mathcal{O}) \ar[d]^r \\
(\mathcal{V}_1)_k \ar[r]^{\Theta_k} & (\mathcal{V}_2)_k
}
\]

\begin{claim}
$\Theta_*p_1=p_2$.
\end{claim}
\begin{claimproof}
We will use Lemma \ref{L:same-type-iff-dense}. Let $f\in K[V_2]^{p_2}$ and let $c\models p_2|K$. By Proposition \ref{P:properties of Phi(V,p)}(3), $\val(f(c))=0$ and thus $f(c)\otimes_{\mathcal{O}_K} k\neq 0$. Since $\Theta_k$ is dominant, $f(\Theta(c))\otimes k\neq \emptyset$. Since $r$ is surjective (Theorem \ref{T:Surjective}), $D_{\mathcal{V}_1}(f\Theta)(\mathcal{O})\neq \emptyset$ and as a result $\val(f(\Theta(c)))=0$, as needed.
%
%
%
\end{claimproof}

We may now lift $\Theta$ to $\Theta_K: (V_1,p_1)\to (V_2,p_2)$ so $\Phi$ is full. 
\end{proof}

\subsection{Finiteness Conditions}\label{ss:finiteness conditions}
A natural question would be, given $(V,p)\in\aGVar[K]$ when is $\Phi^{\aff}(V,p)$ of finite type over $\mathcal{O}_K$? We are still not able to answer this question, but we are able to describe some related notions. We will first need to recall the notion of a strongly stably dominated type. It was first defined in \cite[Section 2.3]{Metastable} and later shown in \cite[Proposition 8.1.2]{Non-Arch-Tame} to be equivalent to the following when the type is concentrated on a variety.

Let $q$ be a definable type on a variety $V$ over a field. Write $\dim(q)$ for the dimension of the Zariski closure of $q$.
\begin{definition}
Let $q$ be an $A$-definable type on a variety $V$ over a valued field. Let $F$ be a valued field with $A\leq \dcl(F)$. Then $q$ is strongly stably dominated if $\dim(q)=\dim(g_*q)$ for some $F$-definable map $g$ into a variety over the residue field.
\end{definition}

For simplicity, assume that $V$ is affine. Consider the following finiteness conditions on $\Phi^{\aff}(V,p)$:
\begin{list}{A}{}
\item[(A)] $\Phi^{\aff}(V,p)$ is of finite type over $\mathcal{O}_K$;
\item[(B)] The polynomially convex hull of $p$
\[C(p):=\{x\in V: \val (f(x))\geq 0\text{ for all } f\in K[V]^p\}\]
is definable; 
\item[(C)] $p$ is strongly stably dominated.
\end{list}

If $\Phi^{\aff}(V,p)$ is of finite type over $\mathcal{O}_K$, then $\Phi^{\aff}(V,p)(\mathcal{O})$ is definable. Since $V$ is affine, $C(p)$ is definably isomorphic to $\Phi^{\aff}(V,p)(\mathcal{O})$. Hence $(A)$ implies $(B)$ and we will show that $(B)$ implies $(C)$.

\begin{definition}\cite[Definition 2.2.2]{Non-Arch-Tame}
Let $X=\varprojlim_i X_i$ be a pro-definable set, i.e. an inverse limit of definable sets. We say that $X$ is \emph{iso-definable} if for some $i_0$ the maps $X_i\to X_{i'}$ are bijections for all $i\geq i'\geq i_0$.
\end{definition}

\begin{fact}\label{F:iso-def}\cite[Corollary 2.2.4]{Non-Arch-Tame}
Let $X$ be a pro-definable set. Then $X$ is iso-definable if and only if $X$ is in (pro-definable) bijection with a definable set.
\end{fact}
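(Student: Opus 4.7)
The plan is to prove the equivalence in two directions, with the forward implication being essentially a tautology and the reverse requiring a bit of diagram chasing.

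For $(\Rightarrow)$, suppose $X = \varprojlim_i X_i$ is iso-definable with stabilizing index $i_0$. Since the transition maps $X_j \to X_i$ are bijections whenever $j \geq i \geq i_0$, the cofinal sub-system indexed by $\{i : i \geq i_0\}$ has limit canonically isomorphic to any single $X_{i_0}$, and so the projection $\pi_{i_0} \colon X \to X_{i_0}$ is a bijection. Its inverse is realised by the compatible family $(\pi_{j, i_0})^{-1} \colon X_{i_0} \to X_j$ for $j \geq i_0$ (extended to indices $j < i_0$ by post-composing with the original transitions), which is pro-definable. This exhibits $X$ in pro-definable bijection with the definable set $X_{i_0}$.

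For $(\Leftarrow)$, suppose $f \colon X \to Y$ is a pro-definable bijection with $Y$ definable. Since any pro-definable map into a definable set factors through a single stage, I would pick $i_0$ and a definable $g \colon X_{i_0} \to Y$ with $f = g \circ \pi_{i_0}$. The pro-definable inverse $f^{-1}$ is given by a compatible system of definable maps $h_j \colon Y \to X_j$ (namely $h_j = \pi_j \circ f^{-1}$). The identities $f \circ f^{-1} = \mathrm{id}_Y$ and $f^{-1} \circ f = \mathrm{id}_X$ translate, after projection, into $g \circ h_{i_0} = \mathrm{id}_Y$ and $h_j \circ g \circ \pi_{i_0} = \pi_j$ for every $j$. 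I would then verify that for each $j \geq i_0$ the composite $\sigma_j := h_j \circ g \colon X_{i_0} \to X_j$ is a two-sided inverse of the transition map $\pi_{j, i_0} \colon X_j \to X_{i_0}$, whence all transitions $X_j \to X_i$ with $j \geq i \geq i_0$ are bijections, proving iso-definability.

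The main obstacle is bookkeeping around surjectivity of the projections $\pi_j$: the identity $h_j \circ g \circ \pi_{i_0} = \pi_j$ only tells us that $\sigma_j \circ \pi_{j, i_0}$ agrees with $\mathrm{id}_{X_j}$ on the image of $\pi_j$, so one needs surjectivity of $\pi_j$ to conclude that $\sigma_j \circ \pi_{j, i_0} = \mathrm{id}_{X_j}$. This is typically arranged by the conventions of the pro-definable framework (using strict inverse limits), or it can be forced by replacing each $X_i$ with the image of $X$ in it, which does not change the inverse limit. Once this normalisation is in place, the two identities $\pi_{j, i_0} \circ \sigma_j = \mathrm{id}_{X_{i_0}}$ and $\sigma_j \circ \pi_{j, i_0} = \mathrm{id}_{X_j}$ fall out as pure diagram-chasing from the relations above.
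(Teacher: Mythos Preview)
The paper does not supply its own proof of this statement: it is quoted verbatim as a Fact from \cite[Corollary 2.2.4]{Non-Arch-Tame}, with no argument given. So there is nothing in the paper to compare your proposal against.

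That said, your argument is essentially the standard one and is correct. The forward direction is indeed tautological. For the reverse direction your diagram chase is right; let me just confirm the one point you flagged. Assuming the projections $\pi_j:X\to X_j$ are surjective (the strict convention, or after replacing each $X_j$ by the image of $X$), the identity $h_{i_0}\circ g\circ\pi_{i_0}=\pi_{i_0}$ (the case $j=i_0$ of your relation) gives $h_{i_0}\circ g=\mathrm{id}_{X_{i_0}}$ by surjectivity of $\pi_{i_0}$; then $\pi_{j,i_0}\circ\sigma_j=\pi_{j,i_0}\circ h_j\circ g=h_{i_0}\circ g=\mathrm{id}_{X_{i_0}}$, while $\sigma_j\circ\pi_{j,i_0}\circ\pi_j=\sigma_j\circ\pi_{i_0}=\pi_j$ gives $\sigma_j\circ\pi_{j,i_0}=\mathrm{id}_{X_j}$ by surjectivity of $\pi_j$. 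So both identities do fall out exactly as you say once the strictness normalisation is made, and the transition maps $\pi_{j,i_0}$ are bijections for $j\geq i_0$.
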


\begin{proposition}\label{P:Iso-def-strongly-stab}
Let $(V,p)\in \aGVar[K]$ and $\mathcal{V}=\Phi^{\aff}(V,p)$. If $\mathcal{V}(\mathcal{O})$ is iso-definable then $p$ is strongly stably dominated and stably dominated via $r:\mathcal{V}(\mathcal{O})\to \mathcal{V}_k$.
\end{proposition}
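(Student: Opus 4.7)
The plan is to reduce to a finite-type slice of $\mathcal{V}$ and invoke Proposition \ref{P:actually stab.dom}. Using Fact \ref{F:directed-limit} together with the lemma preceding it in Section \ref{ss:Pro-def-grps}, one can write $\mathcal{V}=\varprojlim_i \mathcal{V}_i$ as an inverse limit of irreducible separated schemes of finite type over $\mathcal{O}_K$, so that $\mathcal{V}(\mathcal{O})=\varprojlim_i \mathcal{V}_i(\mathcal{O})$; under this presentation $p$ corresponds to a compatible system $(p_i)$ of Zariski dense generically stable types concentrated on the $\mathcal{V}_i(\mathcal{O})$, with $p_i=(\pi_i)_* p$ for $\pi_i:\mathcal{V}(\mathcal{O})\to \mathcal{V}_i(\mathcal{O})$.

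By the iso-definability hypothesis and Fact \ref{F:iso-def}, I fix an index $i$ large enough that $\pi_i$ is a pro-definable bijection. Because $V\cong \varprojlim_i (\mathcal{V}_i)_K$ is of finite type over $K$, I may also arrange $(\mathcal{V}_i)_K=V$ by enlarging $i$. Proposition \ref{P:properties of Phi(V,p)}(4) gives that $r_* p$ is Zariski dense in $\mathcal{V}_k$, and projecting via $\pi_{i,k}$ shows that $(r_i)_* p_i=(\pi_{i,k})_*(r_* p)$ is Zariski dense in the irreducible variety $(\mathcal{V}_i)_k$, hence is its generic type.

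Now I apply Proposition \ref{P:actually stab.dom} to the finite-type scheme $\mathcal{V}_i$ together with the type $p_i$: its hypotheses are in place, and the conclusion is that $p_i$ is stably dominated by $(r_i)_* p_i$ via $r_i : \mathcal{V}_i(\mathcal{O})\to (\mathcal{V}_i)_k$. Since $\pi_i$ is a pro-definable bijection identifying $p$ with $p_i$, and $r_i=\pi_{i,k}\circ r$ factors through $r$, the stable domination transfers to $p$ via $r$: if $a\models p|K$ and $r(a)\models r_* p|B$ then $r_i(a)=\pi_{i,k}(r(a))\models (r_i)_* p_i|B$, whence $a\models p_i|B$ and hence $a\models p|B$ by the bijection $\pi_i$. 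For strong stable domination take $g:=r_i$ as the required $K$-definable map into a variety over $k$; by Fact \ref{F:dim-generic=special}, $\dim(\mathcal{V}_i)_k=\dim(\mathcal{V}_i)_K=\dim V=\dim(p)$, and since $(r_i)_* p$ is generic in $(\mathcal{V}_i)_k$ one has $\dim(g_* p)=\dim(p)$, as required by the definition of strong stable domination.

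The substantive content lies entirely in the translation between the pro-definable and finite-type viewpoints; once the index $i$ is chosen large enough to realize both iso-definability stabilization and stabilization of the finite presentation of $V$ in the generic fiber, all the hypotheses of Proposition \ref{P:actually stab.dom} are available, and I do not anticipate any further obstacle.
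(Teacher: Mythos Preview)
Your approach is essentially the paper's: pass to a finite-type approximation $\mathcal{V}_i$, verify the hypotheses of Proposition~\ref{P:actually stab.dom}, and read off both stable domination via $r$ and strong stable domination from the resulting dimension equality. The only cosmetic difference is that the paper applies Proposition~\ref{P:actually stab.dom} to $\mathcal{V}$ itself (after first establishing $\dim\mathcal{V}_K=\dim\mathcal{V}_k$), whereas you apply it to $\mathcal{V}_i$ and pull the conclusion back along the bijection $\pi_i$; these come to the same thing.

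There is, however, one step you skate over. You write that ``projecting via $\pi_{i,k}$ shows that $(r_i)_*p_i=(\pi_{i,k})_*(r_*p)$ is Zariski dense in the irreducible variety $(\mathcal{V}_i)_k$'', but pushforward along a morphism preserves Zariski density only when the morphism is dominant, and nothing you have said so far forces $\pi_{i,k}:\mathcal{V}_k\to(\mathcal{V}_i)_k$ to be dominant. This is exactly where iso-definability earns its keep, and the paper supplies the missing observation: since $\pi_i:\mathcal{V}(\mathcal{O})\to\mathcal{V}_i(\mathcal{O})$ is bijective and $r_i:\mathcal{V}_i(\mathcal{O})\to(\mathcal{V}_i)_k$ is surjective by Theorem~\ref{T:Surjective}, the commutative square $r_i\circ\pi_i=\pi_{i,k}\circ r$ forces $\pi_{i,k}$ to be surjective on points, hence dominant. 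Once you insert that line, your argument is complete. (Incidentally, your assertion that $(\mathcal{V}_i)_k$ is irreducible is unjustified and need not hold for an arbitrary finite-type approximant; fortunately it is not needed, since Proposition~\ref{P:actually stab.dom} only requires $(r_i)_*p_i$ to be generic in the dimension sense, which your Fact~\ref{F:dim-generic=special} computation already delivers.)
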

\begin{proof}
Since $\mathcal{V}$ is an inverse limit of affine schemes of finite type over $\mathcal{O}_K$, we may write \[\mathcal{V}(\mathcal{O})=\varprojlim_i \mathcal{V}_i(\mathcal{O}).\] $\mathcal{V}(\mathcal{O})$ is iso-definable so we may assume that for every $i$, the map $\mathcal{V}(\mathcal{O})\to \mathcal{V}_i(\mathcal{O})$ is a bijection. 
Since $\mathcal{V}$ is integral, we may assume that so are the $\mathcal{V}_i$. Furthermore, since each of the $\mathcal{V}_i(\mathcal{O})$ are non-empty, the $\mathcal{V}_i$ are faithfully flat over $\mathcal{O}_K$ by Proposition \ref{P:faith-flat}. Consequently, by Fact \ref{F:dim-generic=special},
\[\dim (\mathcal{V}_i)_{K}=\dim (\mathcal{V}_i)_k,\]
for all $i$. For any $i$ consider the following commutative diagram

\[\xymatrix
{
\mathcal{V}(\mathcal{O})\ar[r]^r\ar[d]^{\pi_i} & \mathcal{V}_k\ar[d]^{(\pi_i)_k}\\
\mathcal{V}_i(\mathcal{O})\ar[r]^{r_i} & (\mathcal{V}_i)_k
}
\]
Since $\pi_i$ is bijective, and $r_i$ is surjective by Theorem \ref{T:Surjective}, $(\pi_i)_k$ is also surjective. Consequently, since $r_*p$ is Zariski dense in $\mathcal{V}_k$ by Proposition \ref{P:properties of Phi(V,p)}, $(r_i)_*p_i$ is Zariski dense in $(\mathcal{V}_i)_k$, where $p_i:=(\pi_i)_*p$, and $((\pi_i)_k)_*r_*p=(r_i)_*p_i$. As a result, $(\pi_i)_k$ is a dominant morphism. As a result, since 
\[\dim\mathcal{V}_K=\dim(\mathcal{V}_i)_K=\dim (\mathcal{V}_i)_k\leq \dim \mathcal{V}_k\leq \dim\mathcal{V}_K,\]
we conclude that $\dim\mathcal{V}_K=\dim\mathcal{V}_k$. So by Proposition \ref{P:actually stab.dom}, we conclude that $p$ is stably dominated by $r_*p$ via $r$.

Finally, since $\dim ((r_i)_*\pi_*p)=\dim(p)$, $p$ is strongly stably dominated by the definition of strongly stable domination given above.
\end{proof}

\begin{corollary}\label{C:conv-hull-to-strongly-stab}
Let $(V,p)\in \aGVar[K]$. If the polynomially convex hull of $p$, $C(p)$, is definable then $p$ is strongly stably dominated.
\end{corollary}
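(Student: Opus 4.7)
The plan is to reduce the statement directly to Proposition \ref{P:Iso-def-strongly-stab} via Fact \ref{F:iso-def}. The key observation is that under the affinity hypothesis on $V$, the polynomially convex hull $C(p)$ coincides (as a subset of $V$) with the pro-definable set $\mathcal{V}(\mathcal{O}) = \Phi_K(V,p)(\mathcal{O})$. Indeed, by construction, $\Phi_K^{\aff}(V,p) = \Spec K[V]^p$, and an $\mathcal{O}$-point of this affine scheme lying over the given $K$-point in $V$ is exactly a point $x \in V$ such that $f(x) \in \mathcal{O}$ for every $f \in K[V]^p$, which is precisely the condition defining $C(p)$.

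Having identified $C(p)$ with $\mathcal{V}(\mathcal{O})$, the assumption that $C(p)$ is definable means that $\mathcal{V}(\mathcal{O})$ is in pro-definable bijection with a definable set. By Fact \ref{F:iso-def}, this is equivalent to $\mathcal{V}(\mathcal{O})$ being iso-definable. At this point we are in a position to invoke Proposition \ref{P:Iso-def-strongly-stab}, which precisely gives the conclusion that $p$ is strongly stably dominated (and moreover stably dominated via the reduction map $r \colon \mathcal{V}(\mathcal{O}) \to \mathcal{V}_k$).

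I do not expect any real obstacle here: the work has all been done in the preceding paragraph identifying $(A) \Rightarrow (B)$ implicitly via $C(p) \cong \Phi(V,p)(\mathcal{O})$, in Fact \ref{F:iso-def} which converts "definable image" to "iso-definable", and in Proposition \ref{P:Iso-def-strongly-stab}. The only point worth verifying carefully is the identification $C(p) = \Phi_K(V,p)(\mathcal{O})$ as (pro-)definable subsets of $V$, which follows from the definition $K[V]^p = \{f \in K[V] : (d_p x)(\val(f(x)) \geq 0)\}$ together with the fact that $V = \Phi_K(V,p) \times_{\Spec \mathcal{O}_K} \Spec K$ given in Proposition \ref{P:functor exists}.
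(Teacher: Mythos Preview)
Your proposal is correct and follows essentially the same approach as the paper: identify $C(p)$ with $\mathcal{V}(\mathcal{O})$ via the isomorphism $\phi\colon V\to\mathcal{V}_K$, use definability of $C(p)$ together with Fact~\ref{F:iso-def} to conclude $\mathcal{V}(\mathcal{O})$ is iso-definable, and then invoke Proposition~\ref{P:Iso-def-strongly-stab}. The paper unwinds the appeal to Fact~\ref{F:iso-def} slightly more explicitly (writing $\mathcal{V}(\mathcal{O})=\varprojlim_i\mathcal{V}_i(\mathcal{O})$, arranging the $\phi_i$ to be injective, and using definability of $C(p)=\bigcap_i\phi_i^{-1}(\mathcal{V}_i(\mathcal{O}))$ to find a single $i$ with $\phi(C(p))=\mathcal{V}_i(\mathcal{O})$), but this is precisely the content of Fact~\ref{F:iso-def} applied in this situation, so your more direct citation is equally valid.
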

\begin{proof}
Let $\mathcal{V}=\Phi^{\aff}(V,p)$. Denote by $\phi:V\to \mathcal{V}_{K}$ the isomorphism and notice that $\phi(C(p))=\mathcal{V}(\mathcal{O})$. As before, we may write \[\mathcal{V}(\mathcal{O})=\varprojlim_i \mathcal{V}_i(\mathcal{O})\] and corresponding to it \[\phi=\varprojlim_i \phi_i.\]
Since $\phi$ is injective on $V$ we may assume that all the $\phi_i$ are injective. Thus $C(p)=\bigcap_i \phi_i^{-1}(\mathcal{V}_i(\mathcal{O}))$ but $C(p)$ is definable so $\phi(C(p))=\mathcal{V}_i(\mathcal{O})$ for some $i$, hence $\mathcal{V}(\mathcal{O})$ is iso-definable by Fact \ref{F:iso-def} and $p$ is strongly stably dominated by Proposition \ref{P:Iso-def-strongly-stab}.
\end{proof}

We tend to think that $(C)$ is probably equivalent to $(B)$, at least for curves, but we still do not have a proof.

\begin{example}
By Example \ref{E:p_O-minimal-val}, $p_{\mathcal{O}}^n$ induces the Gauss valuation, and thus $\Phi(\mathbb{A}^n,p_{\mathcal{O}}^n)=\mathbb{A}^n(\mathcal{O})$.
\end{example}

\begin{example}{\cite[Example 3.2.2]{Non-Arch-Tame}}
Let $K=\mathbb{C}(t)^{alg}$ with $\val (t)=1$ and \[H_n=\{(x,y)\in \mathbb{A}^2_K: \val (y-\sum_{i=0}^n (tx)^i)/i!)\geq n+1\}.\] There is a generically stable type $p$ concentrated on $H=\bigcap_n H_n$ which is Zariski dense in $\mathbb{A}^2_K$ which is not strongly stably dominated. As a result $\Phi(\mathbb{A}^2_K,p)$ is not of finite type over $\mathcal{O}_K$.
\end{example}

\begin{question}\label{Q:of finite type for strong}
Let $(V,p)\in \aGVar[K]$ with $p$ strongly stably dominated. Is $\Phi^{\aff}(V,p)$ of finite type over $\mathcal{O}_K$? Is $\Phi^{\aff}(V,p)(\mathcal{O})$ iso-definable?
\end{question}

\section{Generically Stable Groups}\label{s:gen stable grps}
We recall the definition of a generically stable group. Although everything may be defined in a broader context, we restrict ourselves to ACVF, see \cite{Metastable}.

Let $G$ be an ($\infty$-)definable group, $p$ an $A$-definable type on $G$ and $g\in G$. The \emph{left translate} of $p$ by $g$, $gp$,  is the definable type such that for any $A\cup {g}\subseteq A^\prime$, \[d\models p|A^\prime \Leftrightarrow gd\models gp|A^\prime.\] Similarly the right translate $pg$.

\begin{definition}
Let $G$ be an ($\infty$-)definable group. A definable type $p$ is \emph{left generic} if for any $A=\acl(A)$ over which it is defined and $g\in G$, $pg$ is definable over $A$. Similarly right generic.
\end{definition}

\begin{fact}\cite[Lemma 3.11]{Metastable}
Let $G$ be an $\infty$-definable group.
\begin{enumerate}
\item Right generics have boundedly many left translates. Similarly left generics.
\item Any generically stable left generic is also right generic. Any two generically stable generics differ by a left($\backslash$right) translation.
\item Assume $G$ admits a generically stable generic. Let $G^0$ be the intersection of all definable subgroups of finite index. It is of bounded index and called the \emph{connected component} of $G$.
\end{enumerate}
\end{fact}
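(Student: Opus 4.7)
The plan is to handle the three items in turn, using (1) as the technical core and then deriving (2), (3) from it. The recurring tool for the items that involve generic stability is the tensor symmetry $p(x)\otimes p(y) = p(y)\otimes p(x)$, together with the two-sided definability of $p$ that it yields.

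For (1), suppose $p$ is right generic, witnessed by a small $A = \acl(A)$ over which every right translate $pg$ is definable. A bound on the right orbit $\{pg : g\in G\}$ is then immediate, since there are only boundedly many global $A$-definable types on $G$. For the bound on the left orbit, I would argue as follows: fix $a\models p|A$ and observe that the equality $g_1 p = g_2 p$ is determined by $\operatorname{tp}(g_2^{-1}g_1 / A\cup\{a\})$, because the $\varphi$-definition of $gp$ is computable from the $\varphi$-definition of $p$ together with a parameter coding $g$. Since the number of types over $A\cup\{a\}$ is bounded, the left stabilizer of $p$ has bounded index in $G$, and hence there are boundedly many left translates.

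For (2), assume $p$ is generically stable and left generic, so $gp$ is $A$-definable for every $g$ (over a suitable $A=\acl(A)$). The tensor symmetry for generically stable $p$ lets one express the $\varphi$-definition of $pg$ in terms of that of $p$ plus the parameter $g$: concretely, $ag\models pg$ iff $a\models p$, and using the commutativity of $p(x)\otimes p(y)$ together with $A$-definability of $p$ one rewrites the defining schema of $pg$ over $\acl(Ag)$. For the classification up to translation, given two generically stable generics $p$ and $q$, realize $a\models p$ and $b\models q$ with appropriate independence and verify that $ba^{-1}$ witnesses $q=(ba^{-1})p$ via the same symmetry.

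For (3), let $p$ be a generically stable generic and consider the left stabilizer $S := \{g\in G : gp = p\}$. This is an $\infty$-definable subgroup, and by (1) (applied symmetrically using (2)) it has bounded index. For any definable subgroup $H$ of finite index, $p$ must concentrate on a single coset of $H$: otherwise $p$ would distribute non-trivially across finitely many translates, contradicting the classification in (2). Consequently $S\subseteq H$, so $G^0 = \bigcap H \supseteq S$ has bounded index. The main obstacle is part (1): passing from $\acl(A)$-definability of right translates to boundedness of the left orbit. In stable theories this collapses because left and right generics coincide, but in the $\infty$-definable NIP/metastable setting one must manipulate the $\varphi$-definition schemata carefully and exploit algebraic closure of the base in order to close the type-counting argument.
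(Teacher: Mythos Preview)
The paper does not prove this statement: it is recorded as a \emph{Fact} with a citation to \cite[Lemma 3.11]{Metastable}, and no argument is given. There is therefore nothing in the present paper to compare your proposal against.

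That said, a few remarks on the proposal itself. First, there is a convention mismatch: in this paper, \emph{left generic} is defined by the condition that every \emph{right} translate $pg$ is $A$-definable (and symmetrically for right generic). Your item (1) opens with ``suppose $p$ is right generic, witnessed by \ldots every right translate $pg$ is definable,'' which is the opposite convention. Once the conventions are aligned, item (1) is nearly immediate: if all translates on one side are $A$-definable, there are only boundedly many $A$-definable global types, so that orbit is bounded; the introduction of a realization $a\models p|A$ and the type $\operatorname{tp}(g_2^{-1}g_1/A\cup\{a\})$ is unnecessary. Second, your sketch for (2) is too vague to constitute an argument: the claim that tensor symmetry ``lets one express the $\varphi$-definition of $pg$'' over $A$ is precisely what needs to be shown, and you have not indicated how symmetry converts $A$-definability of one-sided translates into $A$-definability of the other side. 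Third, in (3) your inclusion $S\subseteq H$ is not quite right without further argument: if $p$ concentrates on a coset $hH$, then $gp=p$ only forces $g\in hHh^{-1}$; one must pass to the intersection of all conjugates (or note that $G^0$ is normal) to conclude.
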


\begin{fact}\cite[Remark 3.12]{Metastable}
Let $p$ be a generically stable generic on $G$. The following are equivalent:
\begin{enumerate}
\item $p$ is the unique generic type of $G$.
\item For all $g\in G$, $gp=p$.
\item $G=G^0$.
\end{enumerate}
\end{fact}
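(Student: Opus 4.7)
The plan is to prove the chain of implications $(1)\Leftrightarrow(2)$, $(2)\Rightarrow(3)$, and $(3)\Rightarrow(2)$, with the last being the principal difficulty. The first two will be short consequences of the preceding Fact, while the last requires identifying the left stabilizer of $p$ as a \emph{definable} subgroup of $G$ of finite index, not merely a type-definable subgroup of bounded index.

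For $(1)\Leftrightarrow(2)$, I would observe that left multiplication by $g\in G$ is a definable bijection on $G$, so the translate $gp$ of a generically stable generic is again a generically stable generic. By item $(2)$ of the preceding Fact, any two such generics differ by a left translation, so uniqueness of $p$ is exactly the statement that $gp=p$ for all $g\in G$. For $(2)\Rightarrow(3)$, take $H\leq G$ definable of finite index. Its cosets partition $G$ into finitely many definable pieces, so $p$ concentrates on a single coset $aH$. The translate $gp$ then concentrates on $gaH$, and $gp=p$ forces $gaH=aH$, i.e.\ $g\in aHa^{-1}$; since this holds for every $g$, we get $G=aHa^{-1}$, hence $H=G$. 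This shows $G=G^0$.

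The main obstacle is $(3)\Rightarrow(2)$. By item $(1)$ of the preceding Fact, $p$ has boundedly many left translates, so $\mathrm{Stab}_L(p):=\{g\in G : gp=p\}$ is a subgroup of bounded index in $G$. The key step is to promote ``bounded index'' to ``finite index as a \emph{definable} subgroup''. For this I would fix an arbitrary formula $\varphi(x,y)$ and note that the $\varphi$-definitions $(d_p x)\varphi(gx,y)$ of the translates $gp$ form a definable family in $g$; the equivalence relation ``same $\varphi$-definition'' is definable on $G$, so its bounded number of classes in the monster model must actually be finite (a definable equivalence relation with boundedly many classes in a saturated model has only finitely many classes). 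Hence, ranging over enough formulas, there are only finitely many distinct translates $p=p_1,\dots,p_n$. Choosing finitely many parameterized formulas $\varphi_j(x,b_j)$ that together distinguish $p$ from each $p_j$ ($j=2,\dots,n$), one writes $\mathrm{Stab}_L(p)$ as the finite intersection of the definable sets $\{g\in G : (d_p x)\varphi_j(gx,b_j)\leftrightarrow (d_p x)\varphi_j(x,b_j)\}$, thus exhibiting it as a definable subgroup of index $n$ in $G$. Since $G^0$ is contained in every definable finite-index subgroup, the hypothesis $G=G^0$ forces $\mathrm{Stab}_L(p)=G$, i.e.\ $gp=p$ for every $g\in G$. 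The step I expect to require the most care is the compactness/saturation argument that turns boundedly many $\varphi$-definitions into finitely many, together with the verification that the finite intersection above is genuinely the full stabilizer (and not a strictly larger union of cosets), which is what makes $\mathrm{Stab}_L(p)$ itself land on the nose as a definable finite-index subgroup.
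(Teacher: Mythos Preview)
The paper does not prove this statement; it is recorded as a Fact with a citation to \cite[Remark 3.12]{Metastable} and no argument is given. So there is no ``paper's proof'' to compare against, and I will comment only on the correctness of your proposal.

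Your arguments for $(1)\Leftrightarrow(2)$ and $(2)\Rightarrow(3)$ are correct.

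Your argument for $(3)\Rightarrow(2)$ has a genuine gap. You correctly observe that for each formula $\varphi$ the relation ``$gp$ and $hp$ have the same $\varphi$-definition'' is a definable equivalence relation with finitely many classes. But the step ``hence, ranging over enough formulas, there are only finitely many distinct translates'' does not follow: the relation $gp=hp$ is the intersection over \emph{all} $\varphi$ of these relations, and an infinite intersection of finite-class equivalence relations can perfectly well have infinitely many classes. Concretely, take any stable group with $[G:G^0]$ infinite (for instance $G$ a saturated model of $\mathrm{Th}(\mathbb{Z},+,0)$, where $G^0=\bigcap_n nG$ and $G/G^0\cong\widehat{\mathbb{Z}}$): the generic $p$ is generically stable, $\mathrm{Stab}_L(p)=G^0$, and $p$ has $|\widehat{\mathbb{Z}}|$ many left translates. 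Your compactness sketch cannot succeed because the condition ``$gp\neq hp$'' is only ind-definable, not type-definable, so one cannot pass to the limit. Note also that your argument, as written, would prove that $\mathrm{Stab}_L(p)$ is \emph{always} a definable finite-index subgroup (you do not invoke $G=G^0$ until the last line), and the example shows this is false.

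What is actually needed for $(3)\Rightarrow(2)$ is the inclusion $G^0\subseteq\mathrm{Stab}_L(p)$, equivalently that $\mathrm{Stab}_L(p)$ is an intersection of definable \emph{subgroups} (each then automatically of finite index), not merely of the definable sets $S_\varphi$. This is where generic stability enters in an essential way: one uses a local-stability (Baldwin--Saxl type) argument to produce, for each $\varphi$, a definable finite-index subgroup contained in the $\varphi$-stabilizer. Equivalently, one shows that $G^{00}=G^0$ whenever $G$ admits a generically stable generic. That is precisely the content supplied by the cited reference; your outline correctly locates the difficulty but the specific mechanism you propose does not bridge it.
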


\begin{definition}\label{D:gen-stab-grp}
A ($\infty$)-definable group we will be called \emph{generically stable} if it has a generically stable generic.
\end{definition}

\begin{remark}
It is known that in NIP structures, a definable group satisfies Definition \ref{D:gen-stab-grp} if and only if it is generically stable in the sense of \cite[Definition 6.3]{nip-invariant}. In \cite[Definition 4.1]{Metastable}, Hrushovski and Rideau-Kikuchi call such $\infty$-definable groups stably dominated groups in the general metastable setting.
\end{remark}

We wish to understand generically stable $(\infty)$-definable groups. By the following Fact \ref{F:why-subgroup-ofalg}, at least for a start we may restrict ourselves to generically stable Zariski dense subgroups of algebraic groups.

\begin{definition}\cite[Corollary 2.37]{Metastable}
Let $C$ be a substructure of a model of ACVF. A $C$-definable set $D$ will be called \emph{boundedly imaginary} if there exists $\beta\in\Gamma(C)$ and a definable surjective map $g:\left(\mathcal{O}/\beta\mathcal{O}\right)^n\to D$, for some $n\in\mathbb{N}$.
\end{definition}

\begin{fact}\cite[Corollary 6.4]{Metastable}\label{F:why-subgroup-ofalg}
Let $H$ be a connected, generically stable ($\infty$-)definable group. Then there exist an algebraic group $G$ and a definable homomorphism $f: H\to G$, with boundedly imaginary kernel. If $H$ is defined over $C=\acl(C)$, then $f$ can be found over $C$. 
\end{fact}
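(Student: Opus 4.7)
The plan is to perform a Hrushovski--Weil-type group-chunk reconstruction from the generic type, in the spirit of how one recovers algebraic groups from stable generics in ACF. Since $H$ is connected, it has a unique generically stable generic $p$; working over $C=\acl(C)$, pick $a,b\models p\otimes p$ and set $c=a\cdot b\models p$. Any two of $\{a,b,c\}$ are $\otimes$-independent over $C$, and $c\in\dcl(Cab)$, $a\in\dcl(Ccb)$, $b\in\dcl(Cac)$. This is the generic ``group chunk'' data we will algebraicize.

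The first step is to realize $a$ in the valued-field sort, up to boundedly imaginary data. Since $p$ is generically stable, it is stably dominated and in particular orthogonal to $\Gamma$; so all of $\Gamma$-valued data attached to $a$ over $C$ is already in $\Gamma(C)$. Using elimination of imaginaries via the geometric sorts and the description of stable, stably embedded sets as residual-sort pieces, one finds a $C$-definable map $\pi:H\to \mathbb{K}^n$ (possibly composed on the other side with a pro-definable coding) such that $\pi(a)$ carries all the information about $\mathrm{tp}(a/C)$ except a bounded quotient. Concretely, the fibre of $\pi$ over $\pi(a)$ is internal to a bounded sort, of the form $\mathcal{O}/\beta\mathcal{O}$ iterated finitely many times, because the ``lost'' information between $a$ and $\pi(a)$ is precisely the non-field, non-stable part that generic stability forces to be bounded.

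The second step is to transfer the group operation. Let $V$ be the Zariski closure of $\pi(a)$ over $C$; it is a $C$-variety. Setting $\tilde{a}=\pi(a)$ etc., the generic equation $c=a\cdot b$ pushes forward to a $C$-definable partial function $\tilde{a},\tilde{b}\mapsto\tilde{c}$, which is a rational dominant map $V\times V\dashrightarrow V$ and is generically associative (and admits a generic inverse) because $H$ is a group. By Weil's theorem in Hrushovski's definable form, this generic group chunk extends to an algebraic group $G$ over $C$; and $\pi$ extends from the generic to a $C$-definable homomorphism $f:H\to G$ (defined everywhere, not just generically, because on any element $h\in H$ one can use the generic to translate into the domain of the rational chart, apply $\pi$, and translate back in $G$).

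The main obstacle, and the reason the conclusion is only up to boundedly imaginary kernel rather than an embedding, lies in controlling $\ker(f)$. By construction $\ker(f)$ consists of the $h\in H$ that act trivially on the generic at the level of $\pi$, which is exactly the subgroup collapsed by the coding step; so it is contained in a bounded sort and a surjection $(\mathcal{O}/\beta\mathcal{O})^n\twoheadrightarrow\ker(f)$ can be extracted from the internality established in Step~1. The definability over $C=\acl(C)$ is preserved throughout because $p$, $\pi$, and the reconstructed $G$ can all be chosen $C$-definably once $C$ is algebraically closed in the relevant imaginary sense.
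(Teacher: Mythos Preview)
The paper does not prove this statement at all: it is stated as a \emph{Fact} with the citation \cite[Corollary 6.4]{Metastable} and used as a black box. There is therefore no proof in the paper to compare your proposal against.

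As for the proposal on its own merits: the broad strategy---extract a group chunk from the generic of $H$, algebraicize it via a Hrushovski--Weil argument, and control what is lost---is indeed the shape of the argument in \cite{Metastable}. But two of your steps carry the real weight and are not justified in the sketch. First, the existence of a $C$-definable $\pi:H\to\mathbb{K}^n$ whose fibre over $\pi(a)$ is boundedly imaginary is precisely the content of the theorem, not a preliminary; in \cite{Metastable} this is obtained via the metastability machinery (passing through a limit group built from groups internal to the residue field), and your appeal to ``elimination of imaginaries via the geometric sorts'' does not by itself yield a bound on the fibre. Second, once you have such a $\pi$, it is not automatic that $\pi(a\cdot b)$ depends only on $(\pi(a),\pi(b))$, so the pushed-forward operation $(\tilde a,\tilde b)\mapsto\tilde c$ need not be a well-defined (let alone rational) map on $V\times V$; one must first quotient by, or otherwise neutralize, the boundedly imaginary ambiguity before Weil's theorem applies. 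These are the two places where the actual proof does substantive work that your outline elides.
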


It would be interesting to classify the boundedly imaginary groups, for now we show an application for local fields.

A \emph{non-archimedean local field} is a field that is complete with respect to a discrete valuation and whose residue field is finite. Specifically it must be one the following:
\begin{enumerate}
\item The characteristic zero case: finite extensions of the p-adic numbers ($\mathbb{Q}_p$).
\item The characteristic $p$ case: the field of formal Laurent series $\mathbb{F}_q((T))$, for $q$ a power of $p$.
\end{enumerate}

\begin{lemma}
Let $L$ be a local field and $D$ an $L$-definable set. For every $\beta\in \Gamma(L)$, $\left(\mathcal{O}/\beta\mathcal{O}\right) (L)$ is finite. Hence if $D$ if boundedly imaginary, $D(L)$ is finite.
\end{lemma}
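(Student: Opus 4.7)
The plan is to prove the first assertion by a direct computation exploiting the discreteness and completeness of $L$, and then deduce the second via the defining surjection of a boundedly imaginary set.

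For the first assertion, I would start by noting that since $L$ is a non-archimedean local field, $\Gamma_L = \mathbb{Z}$ and hence $\Gamma(L) = \mathbb{Q}\otimes\Gamma_L = \mathbb{Q}$. Given $\beta \in \Gamma(L)$, I may assume $\beta > 0$ (the other case makes the quotient trivial). Fix a uniformizer $\pi$ of $L$ and set $m = \lceil\beta\rceil \in \mathbb{Z}_{\geq 0}$. Since the valuation of every element of $L$ is an integer, $\beta\mathcal{O}\cap L = \pi^m\mathcal{O}_L$, so the natural map $\mathcal{O}_L/\pi^m\mathcal{O}_L \to (\mathcal{O}/\beta\mathcal{O})(L)$ is an injection with finite domain of cardinality $q^m$, where $q=|k_L|$.

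The remaining task for the first part is to show this map is surjective. An $L$-definable element of $\mathcal{O}/\beta\mathcal{O}$ is represented by an $\mathrm{Aut}(\mathbb{U}/L)$-invariant closed ball $B = a + \beta\mathcal{O} \subseteq \mathcal{O}$. Since $L$ is complete and discretely valued, it is spherically complete; combined with the invariance of $B$, one produces a center of $B$ inside $\mathcal{O}_L$ (concretely, $L$-invariance forces the Galois orbit of $a$ inside $L^{\mathrm{alg}}$ to be confined to $B$, and spherical completeness lets one pass to an $L$-representative). This yields $(\mathcal{O}/\beta\mathcal{O})(L) = \mathcal{O}_L/\pi^m\mathcal{O}_L$, finite of size $q^m$.

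For the second assertion, fix an $L$-definable surjection $g\colon (\mathcal{O}/\beta\mathcal{O})^n \twoheadrightarrow D$ witnessing that $D$ is boundedly imaginary. The first part gives $(\mathcal{O}/\beta\mathcal{O})^n(L) = (\mathcal{O}_L/\pi^m\mathcal{O}_L)^n$, which is finite. Since $g$ is $L$-definable, $g\bigl((\mathcal{O}/\beta\mathcal{O})^n(L)\bigr) \subseteq D(L)$ is automatically finite; to conclude that $D(L)$ itself is finite, I would argue that every element of $D(L)$ arises from an $L$-point, or more delicately that the remaining $L$-definable equivalence classes of the relation $\sim$ defined by $g$ are still finitely many. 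The latter uses that $(\mathcal{O}/\beta\mathcal{O})^n$ is stably embedded and, viewed through its natural filtration by copies of the residue field, controlled by constructible data over $k=k_L$; an $L$-definable $\sim$-class is then determined by a finite Galois-invariant package over $\mathbb{F}_q$, giving only finitely many such packages.

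The main obstacle is precisely this last point in the second part: an $L$-definable fiber of $g$ need not meet $(\mathcal{O}/\beta\mathcal{O})^n(L)$, so one cannot simply write $D(L) = g((\mathcal{O}/\beta\mathcal{O})^n(L))$. The resolution is to exploit the residue-field-internal nature of $\mathcal{O}/\beta\mathcal{O}$ over the local base, combined with the finiteness of $k_L = \mathbb{F}_q$, to bound the number of $\mathrm{Aut}(\mathbb{U}/L)$-invariant $\sim$-classes. The first assertion is the computational backbone; the second requires this additional (but mild) internality/Galois-descent argument.
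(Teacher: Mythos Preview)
The paper's proof of the first assertion is a one-line topological argument: in the non-archimedean topology on $L$, the ring $\mathcal{O}(L)$ is compact and $\beta\mathcal{O}(L)$ is an open subgroup, so the quotient is compact and discrete, hence finite. For the second assertion the paper writes only ``The result follows.'' Your explicit identification of the quotient with $\mathcal{O}_L/\pi^m\mathcal{O}_L$ of cardinality $q^m$ is correct arithmetic but unnecessary once one has compactness; the paper's route is shorter and more conceptual.

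You are, however, more scrupulous than the paper on two points it elides: whether every $L$-definable coset in $\mathcal{O}/\beta\mathcal{O}$ is represented by an element of $\mathcal{O}_L$, and whether every $d\in D(L)$ lifts along $g$ to an $L$-point of $(\mathcal{O}/\beta\mathcal{O})^n$. Flagging these is to your credit; the paper simply does not address them. But your own resolutions remain sketches, and the first one is not correct in positive characteristic as stated. For $L=\mathbb{F}_p((t))$ and $\beta=1$, the element $t^{1/p}$ lies in $\dcl(L)$ (it is the unique $p$-th root of $t$), so the coset $t^{1/p}+\beta\mathcal{O}$ is $L$-definable; yet for any $a\in L$ one has $\val(a-t^{1/p})=\min(\val(a),1/p)\leq 1/p<1$, so no representative in $L$ exists. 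Thus ``invariance plus spherical completeness produces a center in $\mathcal{O}_L$'' fails without a perfectness hypothesis, and indeed the cosets $t^{1/p^k}+\beta\mathcal{O}$ are pairwise distinct and all $L$-definable, making $(\mathcal{O}/\beta\mathcal{O})(L)$ infinite under the paper's convention $D(L)=D\cap\dcl(L)$. In characteristic zero $L$ is perfect, $\dcl(L)\cap VF=L$, and both your sketches (and the paper's compactness argument) go through; in characteristic $p$ neither your argument nor the paper's is complete as written, and one should either restrict to that case or reinterpret $D(L)$ as the literal $L$-points rather than $\dcl(L)$-points.
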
 

\begin{proof}
In the non-archimedean topology on $L$, $\mathcal{O}(L)$ is compact and $\beta\mathcal{O}(L)$ is an open subgroup of $\mathcal{O}(L)$. The quotient $\left(\mathcal{O}/\beta\mathcal{O}\right) (L)$ is compact and discrete, thus finite. The result follows.
\end{proof}

\begin{corollary}
Let $L$ be a  local field and $H$ a connected generically stable group definable over $L$. Then there exists a definable homomorphism \[f:H(L)\to G(L)\] with $G$ an algebraic group over $L$, with finite kernel.
\end{corollary}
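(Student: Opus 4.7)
The proof combines Fact \ref{F:why-subgroup-ofalg} with the preceding lemma, bridging them via Galois descent. First, apply Fact \ref{F:why-subgroup-ofalg} to $H$: since $H$ is definable over $L$ and hence over $\acl(L)$, we obtain an algebraic group $G_0$ and a definable group homomorphism $f_0 : H \to G_0$, both over $\acl(L)$, with $\ker(f_0)$ boundedly imaginary. By compactness, $G_0$ and $f_0$ are actually defined over some finite Galois extension $L_1/L$, and a finite extension of a non-archimedean local field is itself a non-archimedean local field, so $L_1$ is one.

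Next, descend from $L_1$ down to $L$ using Weil restriction of scalars: set $G := \mathrm{Res}_{L_1/L}(G_0)$. Then $G$ is an algebraic group over $L$ and $G(L) = G_0(L_1)$. Restricting $f_0$ to $H(L) \subseteq H(L_1)$ yields a map $f : H(L) \to G_0(L_1) = G(L)$; expressing the coordinates of $f_0$ in terms of an $L$-basis of $L_1$ exhibits $f$ as an $L$-definable group homomorphism.

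It remains to control the kernel. Note that
\[\ker(f) \;=\; \ker(f_0) \cap H(L) \;\subseteq\; \ker(f_0) \cap H(L_1) \;=\; \ker(f_0)(L_1).\]
Since $\ker(f_0)$ is boundedly imaginary and $L_1$-definable, and $L_1$ is a local field, the preceding lemma applied with $L_1$ in place of $L$ forces $\ker(f_0)(L_1)$ to be finite. Hence $\ker(f)$ is finite, as required.

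The only genuine subtlety is the Galois descent step, needed to produce $G$ over $L$ (rather than merely over $L_1$) and $f$ as an $L$-definable map; this is handled by the standard Weil restriction construction. The key model-theoretic input is the finiteness lemma, which upgrades the bounded-imaginary kernel coming from Fact \ref{F:why-subgroup-ofalg} into an honest finite kernel on $L$-points.
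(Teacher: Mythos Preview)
Your argument follows the same core strategy as the paper's proof: invoke Fact~\ref{F:why-subgroup-ofalg} to obtain $f_0$ and $G_0$ over a finite extension $L_1$ of $L$, observe that $L_1$ is again a local field, and apply the preceding lemma to conclude that $\ker(f_0)(L_1)$ is finite. The paper stops there and does not carry out any descent; its proof is content with $G$ living over a finite extension of $L$, even though the statement as written asks for $G$ over $L$.

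You go further and actually address the ``over $L$'' clause via Weil restriction, which is a legitimate and standard way to close that gap. One small correction: you do not need (and in positive characteristic cannot in general arrange) $L_1/L$ to be \emph{Galois}. For $L = \mathbb{F}_q((T))$ the parameters coming out of $\acl(L)$ may sit in an inseparable extension. This is harmless, since Weil restriction along any finite field extension exists for quasi-projective $L_1$-schemes and takes smooth connected group schemes to smooth connected group schemes, so $\mathrm{Res}_{L_1/L}(G_0)$ is still an algebraic group over $L$ in the paper's sense. Simply replace ``finite Galois extension'' by ``finite extension'' and the argument goes through unchanged.
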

\begin{proof}
Since $H$ is definable over $L$, the proof of Fact \ref{F:why-subgroup-ofalg} shows that $f$ and $G$ are definable over a finite extension of $L$, thus so is the kernel. A finite extension of a local field is still a local field so the kernel is finite.
\end{proof}

\subsection{Examples}
\subsubsection{$GL_n(\mathcal{O})$}
Recall Example \ref{E:GL_n-mmp}, let $p:=\varphi_*p^{n^2}_{\mathcal{O}}\in \mathrm{GL}_n(\mathcal{O})$, where $\varphi$ is the definable map 
\[\bar{x}\mapsto \left( \bar{x}, \frac{1}{\det (\bar{x})}\right),\]
 be the Zariski dense generically stable type that exhibits the mmp for $\mathrm{GL}_n$ from that example. It follows from \cite[Proposition 6.9]{Metastable} that $p$ is the unique generic of $\mathrm{GL}_n(\mathcal{O})$.

By the aid of the following we get automatically a wider class of examples.
\begin{fact}\cite[Corollary 4.5]{Metastable}\label{F:intersection with alg.grp is gen. stable}
Let $G$ be an algebraic group, $N$ an algebraic subgroup. Let $H$ be a definable subgroup of $G$ in ACVF, with $H$ generically stable. Then $H\cap N$ is generically stable.
\end{fact}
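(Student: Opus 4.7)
The plan is to construct a generically stable generic on $H \cap N$ by manipulating the given generic $p$ of $H$, using the algebraic projection $\pi : G \to G/N$ and the structure of generically stable groups.

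First, I would reduce to the case where $H$ is connected with unique generic $p$. Since $[H : H^0]$ is bounded (generic stability implies $H^0$ has bounded index), the subgroup $H^0 \cap N$ has bounded index in $H \cap N$, so a generically stable generic on $H^0 \cap N$ produces one on $H \cap N$ by translating across the boundedly many cosets.

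Assume now that $H$ is connected with unique generic $p$ defined over some $C = \acl(C)$. The restriction $f := \pi|_H : H \to HN/N$ is a definable group homomorphism (in the ACF reduct) with kernel $H \cap N$. Pushing forward gives a generically stable type $q := f_*p$ on the definable subgroup $HN/N$. Pick $b \models p|C$ and consider the relative type
\[
r := p\bigl|_{Cf(b)} = tp(b/Cf(b)),
\]
which is concentrated on the coset $b(H \cap N)$. Translating by $b^{-1}$, define
\[
s := \bigl(x \mapsto b^{-1}x\bigr)_* r,
\]
a type concentrated on $H \cap N$. Since pushforwards preserve generic stability, generic stability of $s$ reduces to that of $r$.

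It remains to verify that $s$ is a generically stable generic of $H \cap N$. For the bounded-orbit condition, I would show that if $b'$ is another realisation of $p|C$ lying in the coset $b(H \cap N)$, then $s_{b'}$ differs from $s_b$ by left translation by an element of $H \cap N$, so the family $\{s_b : b \models p|C\}$ forms a single $(H \cap N)$-orbit under left translation. Consequently, for any $h \in H \cap N$, the left translate $h \cdot s_b$ is identified with $s_{bh^{-1}}$ and lies in this bounded family, which is exactly the definition of a generic of $H \cap N$; combined with generic stability of $s$ this gives that $H \cap N$ is generically stable.

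The main obstacle will be justifying that the relative type $r$ is itself generically stable: this amounts to conditioning a generically stable type on its generically stable pushforward, a step best handled via the stable-domination data of $p$ from \cite{StableDomination}. Concretely, if $p$ is stably dominated by a stable type $p_0$ via an $A$-definable map $h$ into a stably embedded stable set, then the relative type $r$ inherits stable domination over $Cf(b)$ via the restriction of $h$ to the coset, and translation by $b^{-1}$ transfers this to $s$. Once the relative generic stability is in hand, the remaining checks are routine translation computations inside the connected generically stable group $H$.
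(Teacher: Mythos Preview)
The paper does not prove this statement; it is quoted as a Fact from \cite[Corollary 4.5]{Metastable} and used as a black box. So there is no in-paper proof to compare against, and I assess your sketch on its own terms.

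Your overall strategy---reduce to $H$ connected, push the generic $p$ forward along $\pi\colon G\to G/N$, and extract a generic of $H\cap N$ from the fibre over $f(b)$---is the natural one. But two of your steps are genuine gaps rather than routine checks.

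\emph{Generic stability of the relative type.} Your justification that the dominating map $h$ ``restricts to the coset'' and continues to dominate is not valid as stated: stable domination of $tp(b/C)$ via $h$ does not formally give stable domination of $tp(b/Cf(b))$ via the same $h$. What is actually needed is a descent result for stable domination in ACVF (of the shape: if $tp(b/C)$ is stably dominated and $d\in\dcl(Cb)$ then $tp(b/Cd)$ is stably dominated), which does hold under suitable hypotheses on the base but is a theorem, not the triviality you describe. You should isolate and cite the precise statement.

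\emph{Genericity of $s$.} You conflate ``translates form a bounded family'' with the definition of generic used in this paper, namely that for each $h\in H\cap N$ the translate $h\cdot s_b$ is definable over $\acl$ of the base of $s_b$ itself. Showing that $\{s_b : b\models p|C\}$ is a single $(H\cap N)$-orbit does not yield this. Moreover, your identification $h\cdot s_b=s_{bh^{-1}}$ is unjustified: unwinding the definitions, it amounts to $tp(b/Cf(b))=tp(bh^{-1}/Cf(b))$, which you have not argued and which is really the point at issue. The correct route is to show directly that $s_b$ depends only on $f(b)$ (hence is $\acl(Cf(b))$-definable) and is left-$(H\cap N)$-invariant; both facts come out of the stable-domination descent once it is properly set up, not from translation bookkeeping.
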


\begin{corollary}
Let $G\subseteq \mathrm{GL}_n$ be an affine subgroup scheme over $\mathcal{O}$ then $G(\mathcal{O})$ is generically stable.
\end{corollary}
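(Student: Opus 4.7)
The plan is to invoke Fact \ref{F:intersection with alg.grp is gen. stable} directly. From the preceding discussion, $H:=\mathrm{GL}_n(\mathcal{O})$ is a generically stable definable subgroup of the algebraic group $\mathrm{GL}_n$. Let $N:=G_K=G\times_{\mathcal{O}}K$ be the generic fiber, viewed as a closed (hence algebraic) subgroup of $\mathrm{GL}_{n,K}$. Then Fact \ref{F:intersection with alg.grp is gen. stable}, applied with the ambient algebraic group $\mathrm{GL}_n$, the algebraic subgroup $N$, and the generically stable definable subgroup $H$, yields that $H\cap N$ is generically stable.

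The one remaining point is to identify $G(\mathcal{O})$ with $H\cap N$ as definable subgroups of $\mathrm{GL}_n(K)$. Since $G$ is affine (hence separated) over $\mathcal{O}$, the valuative criterion of separatedness from Section \ref{ss:interp. Grp.Sch} gives $G(\mathcal{O})\hookrightarrow G_K(K)=N$. On the other hand $G$ is a closed subscheme of $\mathrm{GL}_{n,\mathcal{O}}$, so a point of $\mathrm{GL}_n(K)$ lies in $G(\mathcal{O})$ iff it lies in $\mathrm{GL}_n(\mathcal{O})$ and satisfies the defining equations of $G$, i.e.\ lies in $N$. Hence $G(\mathcal{O})=H\cap N$ and the corollary follows immediately. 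No serious obstacle is expected; the content of the corollary is essentially the scheme-theoretic bookkeeping required to put oneself in the hypotheses of Fact \ref{F:intersection with alg.grp is gen. stable}.
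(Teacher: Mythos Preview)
Your proposal is correct and follows exactly the approach implicit in the paper: the corollary is stated without proof immediately after Fact~\ref{F:intersection with alg.grp is gen. stable}, and the intended argument is precisely the one you give --- apply that fact with ambient group $\mathrm{GL}_n$, algebraic subgroup $N=G_K$, and $H=\mathrm{GL}_n(\mathcal{O})$, noting that $G(\mathcal{O})=\mathrm{GL}_n(\mathcal{O})\cap G_K$ since $G$ is a closed subscheme of $\mathrm{GL}_{n,\mathcal{O}}$. You have simply made explicit the scheme-theoretic identification the paper leaves to the reader.
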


\subsubsection{$\mathbb{G}_m$ and $\mathbb{G}_a$ are not Generically Stable}
On the other hand, the following are examples of definable groups which are not generically stable. 
\begin{fact}\label{F:Gm and Ga not generically stable}
$\mathbb{G}_m$ and $\mathbb{G}_a$ are not generically stable.
\end{fact}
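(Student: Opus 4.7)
The plan is to derive a contradiction from the assumption that either group admits a generically stable generic, by exploiting orthogonality to $\Gamma$ together with the definability of $\val$. The key pressure point is the Fact (recalled before Definition \ref{D:gen-stab-grp}) that a generic of an ($\infty$-)definable group has boundedly many translates; I intend to produce, from any putative generically stable generic, unboundedly many distinct translates via the valuation.

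For $\mathbb{G}_a$: suppose $p$ is a generically stable generic, definable over a small model $M$. By the equivalence of generic stability with orthogonality to $\Gamma$ in ACVF, $\val_*p$ is a constant type, so there exists $\gamma_0 \in \Gamma(M)$ with $(d_p x)\,\val(x) = \gamma_0$. For any $a \in \mathbb{K}$ with $\val(a) < \gamma_0$, if $c \models (a+p)|Ma$ then $c - a \models p|Ma$, whence $\val(c-a) = \gamma_0 > \val(a)$; the ultrametric inequality forces $\val(c) = \val(a)$. Hence $\val_*(a+p)$ is the constant type at $\val(a)$. As $a$ varies over elements of $\mathbb{K}$ of different (sufficiently negative) valuations, the translates $a+p$ are pairwise distinct; since $\Gamma_\mathbb{K}$ is unbounded below, the collection $\{a+p : a \in \mathbb{K}\}$ has unbounded cardinality, contradicting that $p$ has boundedly many translates.

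For $\mathbb{G}_m$: the argument is analogous. With $p$ a generically stable generic over a small model $M$, orthogonality to $\Gamma$ yields $\gamma_0 \in \Gamma(M)$ with $(d_p x)\,\val(x) = \gamma_0$. For $a \in \mathbb{K}^\times$ and $c \models (a\cdot p)|Ma$, we have $a^{-1}c \models p|Ma$, so $\val(c) = \val(a) + \gamma_0$. Thus the translates $a \cdot p$ are distinguished (by $\val_*$) as $\val(a)$ runs through the unbounded set $\Gamma_\mathbb{K}$, again producing unboundedly many translates and contradicting genericity.

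There is no real obstacle here: the only thing to check carefully is that orthogonality of $p$ to $\Gamma$ forces $\val(c)$ to be a fixed element of $\Gamma(M)$ for $c \models p|M$ (rather than merely attaining some value in $\Gamma_\mathbb{K}$), and that the small-model bookkeeping is consistent when translating by elements $a$ from the monster; both are immediate from the cited facts.
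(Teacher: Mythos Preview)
Your argument is correct and is precisely a fleshed-out version of the paper's one-line proof, which simply asserts that any generically stable type on either group has unboundedly many translates. You supply the mechanism the paper leaves implicit: orthogonality to $\Gamma$ pins down $\val_*p$ to a constant $\gamma_0$, and then additive (resp.\ multiplicative) translation shifts that constant, producing $|\Gamma_{\mathbb{K}}|$-many distinct translates in contradiction with the bounded-translates fact for generics. The only point worth making explicit is that $\gamma_0 \neq \infty$ (equivalently, $p$ is not the realized type at $0$), but this is immediate since a realized type cannot be generic in $\mathbb{G}_a$.
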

\begin{proof}
One can easily see that any generically stable type concentrated on any of these groups must have unboundedly many translates.
\end{proof}

As a corollary we get the following observations.

\begin{proposition}\label{P:No_Ga_Gm=Abelian}
Let $G$ be an algebraic group over $K$, a model of ACVF, with a generically stable generic type. Then $G$ is an Abelian Variety.
\end{proposition}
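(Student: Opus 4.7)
The plan is to combine the Chevalley--Barsotti structure theorem with the stability facts already collected in the paper.

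First, recall that under the paper's convention an algebraic group is geometrically integral, so $G$ is connected. By Chevalley's structure theorem, there is a short exact sequence
\[
1 \to L \to G \to A \to 1
\]
where $L$ is a connected affine (linear) algebraic subgroup of $G$ and $A$ is an abelian variety. To conclude that $G$ itself is an abelian variety it suffices to show $L$ is trivial.

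Suppose for contradiction that $L \neq \{e\}$. The structure theory of connected linear algebraic groups then guarantees that $L$ contains a closed algebraic subgroup $N \leq G$ with $N \cong \mathbb{G}_m$ or $N \cong \mathbb{G}_a$: indeed, pick a Borel subgroup of $L$, which is connected and solvable, and use that a non-trivial connected solvable linear algebraic group contains a copy of $\mathbb{G}_a$ or $\mathbb{G}_m$ as a closed subgroup.

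Now apply Fact \ref{F:intersection with alg.grp is gen. stable} with $H := G$ (which is itself a generically stable $\infty$-definable subgroup of $G$ by hypothesis) and with this algebraic subgroup $N \leq G$: the intersection $H \cap N = N$ is generically stable. But $N$ is isomorphic to $\mathbb{G}_m$ or $\mathbb{G}_a$, contradicting Fact \ref{F:Gm and Ga not generically stable}. Hence $L = \{e\}$ and $G = A$ is an abelian variety.

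The only non-model-theoretic input is the embedding of $\mathbb{G}_m$ or $\mathbb{G}_a$ into any non-trivial connected linear algebraic group; everything else is an immediate application of previously stated facts, so I do not anticipate any significant obstacle.
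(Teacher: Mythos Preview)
Your argument is correct and follows essentially the same approach as the paper: apply Chevalley's theorem, then use Facts \ref{F:intersection with alg.grp is gen. stable} and \ref{F:Gm and Ga not generically stable} to show the linear part must be trivial. The only difference is cosmetic: you locate a copy of $\mathbb{G}_a$ or $\mathbb{G}_m$ in a single step via a Borel subgroup, whereas the paper splits into two cases (nontrivial radical versus semisimple) before reaching the same contradiction.
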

\begin{proof}
By Chevalley's Theorem there exists a unique normal closed affine algebraic subgroup of $G$ such that the quotient is an Abelian variety. Thus it is sufficient to show that this affine subgroup is trivial.

Let $H$ be this subgroup. Since $G$ has a generically stable generic, it can not contain copies of $\mathbb{G}_a$ or $\mathbb{G}_m$, for otherwise by \Cref{F:intersection with alg.grp is gen. stable} they would be generically stable. Thus the same holds for $H$. The radical of $H$ is the maximal closed connected normal solvable subgroup of $H$, but every connected solvable affine group has an isomorphic copy of $\mathbb{G}_a$ or $\mathbb{G}_m$ \cite[Lemma 6.3.4]{lingrps}, so $H$ is semisimple. But every semisimple group is generated by closed isomorphic copies of $\mathbb{G}_a$ \cite[Theorem 8.1.5]{lingrps}. So $H$ is trivial.
\end{proof}

\begin{corollary}\label{C:abel-is-connected}
Let $G$ be an algebraic group over $K$, a model of ACVF, with a generically stable generic type. Then $G$ is a connected generically stable group, i.e. it has a unique generically stable generic.
\end{corollary}
\begin{proof}
Since $G$ is an Abelian variety, it is a divisible group. Since $G^0=Stab(p)$ is an intersection of finite-index subgroups and every finite divisible group is trivial, $G^0=G$ and $G$ has a unique generically stable generic.
\end{proof}

\subsubsection{Group Schemes of Finite Type over $\mathcal{O}_K$}

\begin{proposition}\label{P:grp scheme over O is gen stable}
Let $\mathcal{G}$ be an integral group scheme of finite type over $\mathcal{O}_K$. Then $\mathcal{G}(\mathcal{O})$ is generically stable.
\end{proposition}
\begin{proof}
Let $q$ be a generic type of $\mathcal{G}_k$ and let $p$ be the generically stable $K$-definable type on $\mathcal{G}(\mathcal{O})$ supplied by Proposition \ref{P:existence of gen stable generic}.
For $g\in \mathcal{G}(\mathcal{O})$, consider the definable type $gp$. On the face of it it is definable over $\acl(Kg)$, for genericity we need to show that it is $K$-definable.

Since $r:\mathcal{G}(\mathcal{O})\to\mathcal{G}_k$ is a group homomorphism, $r_*(gp)=r(g)r_*(p)=r(g)q$. Since $q$ is a generic type of $G_k$, $r(g)q$ is also a generic type of $\mathcal{G}_k$. By Lemma \ref{L:6.7fromMeta}, $gp$ is also $K$-definable.
\end{proof}

\subsection{The Maximum Modulus Principle for Group Schemes over $\mathcal{O}_K$}\label{ss:mmp_for_O_schemes}
Recall the maximum modulus principle (see Definition \ref{D:mmp}). Hrushovski and Rideau-Kikuchi give in \cite[Proposition 6.9]{Metastable} a characterization of connected generically stable subgroups of affine algebraic groups in terms of the maximum modulus principle. What follows is a generalization.

Let $p(x), q(y)$ be types, define $(p\times q)(x,y):=p(x)\cup q(y)$.
\begin{fact}\cite[Theorem 14.13]{StableDomination}\label{F:mmp from stabledomination} 
Let $P(x,y)$ be a polynomial over the algebraically closed valued field $K$, and $p,q$ be generically stable types in the field sort over $K$. Then $|P(x,y)|$ has a maximum $\gamma_{\max}\in \Gamma(K)$ on $p\times q$. Also, \[(a,b)\models p\otimes q\Rightarrow |P(a,b)|=\gamma_{\max}.\]
\end{fact}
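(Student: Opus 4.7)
The plan is to prove the two assertions separately, and to adapt the approach already used in Proposition~\ref{P:prod-two-affines}. The second assertion (the common value on $p\otimes q$) follows at once from generic stability and orthogonality to $\Gamma$, while the first (the maximum over $p\times q$) is obtained by passing to a maximally complete extension, applying Lemma~\ref{C:Cor_from_Lem12.4} to obtain a valuation-compatible decomposition of $P(a,b)$, descending to $K$ by definability, and concluding via the non-archimedean triangle inequality.

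For the value on the tensor product: products of generically stable types are generically stable, so $p\otimes q$ is generically stable and hence orthogonal to $\Gamma$. Thus $(\val\circ P)_{\ast}(p\otimes q)$ is a $K$-definable constant type on $\Gamma$, producing $\gamma_{\max}\in\Gamma(K)$ with $|P(a,b)|=\gamma_{\max}$ for every $(a,b)\models p\otimes q$.

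For the bound on $p\times q$: fix $(a',b')\models p\times q$, without assuming any independence. Extend $K$ to a maximally complete $K_1\succ K$ and realize $(a,b)\models (p\otimes q)|K_1$. Applying Lemma~\ref{C:Cor_from_Lem12.4}, which rests on the linear disjointness of residue fields of independent realizations of stably dominated types (\cite[Proposition~8.19]{StableDomination}), to the monomial expansion of $P(a,b)$ yields a rewriting $P(a,b)=\sum_j f_j(a)g_j(b)$, where the $f_j\in K_1[x]$ and $g_j\in K_1[y]$ lie in the $K_1$-spans of the monomials of $P$ in $x$ and $y$ respectively, with the crucial non-cancellation property $\val(P(a,b))=\min_j\val(f_j(a)g_j(b))$. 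Since $p,q$ are orthogonal to $\Gamma$ and $\Gamma(K)$ is divisible, both are strictly based on $K$, so one rescales the $f_j,g_j$ by suitable elements of $K^{\times}$ to obtain $\val(f_j(a))=0$ and hence $\val(g_j(b))\geq\gamma$ for all $j$, where $\gamma$ denotes the common value $\val(P(a,b))$. The existence of such a decomposition, with coefficients in the $K$-span of the monomials, is a first-order condition by definability of $p\otimes q$, with degrees bounded by $\deg P$; so by the inclusion $K\prec K_1$ of models of ACVF one may take $f_j\in K[x]$ and $g_j\in K[y]$. Then $\val(f_j(x))\geq 0$ belongs to $p$ and $\val(g_j(y))\geq\gamma$ belongs to $q$, so every $a'\models p|K$ and $b'\models q|K$ satisfies these, and the ultrametric inequality gives $\val(P(a',b'))\geq\min_j\val(f_j(a')g_j(b'))\geq\gamma$, i.e.\ $|P(a',b')|\leq\gamma_{\max}$.

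The main obstacle is obtaining the valuation-compatible decomposition from Lemma~\ref{C:Cor_from_Lem12.4}: this step requires the maximally complete extension $K_1$ and exploits the stable-domination structure of $p$ and $q$ through linear disjointness of residue fields. Once the decomposition is in hand, the descent to $K$ (via definability and elementarity) and the ultrametric bound are comparatively routine.
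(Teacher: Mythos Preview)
The paper does not prove this statement; it is cited as a Fact from \cite[Theorem~14.13]{StableDomination} and used as a black box. So there is no proof in the paper to compare against.

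Your argument is correct and, as you note, it is essentially the same mechanism already deployed in Proposition~\ref{P:prod-two-affines}: pass to a maximally complete $K_1$, apply Fact~\ref{F:prod-from-stabdom} (via Lemma~\ref{C:Cor_from_Lem12.4}) to obtain a valuation-compatible tensor decomposition, normalize, and descend by definability and elementarity. Two small points of precision are worth noting. First, at the rescaling step the $f_j,g_j$ still have coefficients in $K_1$, so $\val(f_j(a))\in\Gamma_{K_1}$ and the normalizing constants must a priori be taken in $K_1^{\times}$ rather than $K^{\times}$; this is harmless because the descent to $K$ is carried out afterwards via the first-order existence statement. Second, the equality $\sum f_j\otimes g_j=\sum a^{\alpha_i}\otimes c_i b^{\beta_i}$ produced by Fact~\ref{F:prod-from-stabdom} lives in $K_1[a]\otimes_{K_1}K_1[b]$, not in $K_1[x]\otimes_{K_1}K_1[y]$; when $p$ or $q$ is not Zariski dense this only gives $P(x,y)-\sum_j f_j(x)g_j(y)\in I_p K_1[x,y]+I_q K_1[x,y]$, where $I_p,I_q$ are the vanishing ideals. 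This is still enough, since any $(a',b')\models p\times q$ kills both summands, and the membership condition is first-order in the coefficients (with $K$-parameters), so the descent goes through unchanged.
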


\begin{theorem}\label{T:unique-generic-only-alg-group}
Let $K$ be a model of ACVF, $\mathcal{G}$ an integral separated group scheme of finite type over $\mathcal{O}_K$ and $p$ a $K$-definable type concentrated on $\mathcal{G}(\mathcal{O})$. The following are equivalent:
\begin{enumerate}
\item $\mathcal{G}$ has the mmp w.r.t. $p$;
\item $\mathcal{G}(\mathcal{O})$ is generically stable with $p$ as its unique generically stable generic type.
\end{enumerate}
\end{theorem}
\begin{proof}
$(1)\implies (2)$. Assume that $\mathcal{G}$ has the mmp w.r.t. $p$, in particular it is generically stable by Lemma \ref{L:implications-of-mmp}. We will prove that $gp=p$ for every $g\in \mathcal{G}(\mathcal{O})$.

Let $\mathcal{V}\subseteq \mathcal{G}$ be an affine open subscheme. For $g\in \mathcal{G}(\mathcal{O})$, denote by $g\cdot \mathcal{V}$ the image of $\mathcal{V}$ under the left translation map, it is an open affine subscheme over $\mathcal{O}$.  By Proposition \ref{P:mmp iff for affine cover}, $gp$ is concentrated on $\mathcal{V}(\mathcal{O})$ for every $g\in \mathcal{G}(\mathcal{O})$ (indeed, $p\in g^{-1}\cdot \mathcal{V}(\mathcal{O})$). Let $f$ be a regular function on $\mathcal{V}_\mathbb{K}$, and $b\models p|L$, where $L$ is a model over which $f$ is defined, if \[\val (f(b))=\gamma_f\] then by the maximum modulus principle \[\val (f(gb))\geq \gamma_f.\] On the other hand, since $p$ and $g^{-1}p$ are concentrated on $g^{-1}\cdot \mathcal{V}(\mathcal{O})$ and $f(gx)$ is a regular function on $(g^{-1}\cdot \mathcal{V})_\mathbb{K}$ then by the maximum modulus principle with respect to $p$ for $g^{-1}\mathcal{V}$, \[\val (f(b))=\val (f(gg^{-1}b))\geq \val (f(gb)),\]

so $p=gp$.

$(2)\implies (1)$. Let $\mathcal{G}=\bigcup_{i=1}^n \mathcal{V}_i$ be an affine open cover by affine open subschemes over $\mathcal{O}_K$.

\begin{claim*}
The type $p$ is concentrated on every $\mathcal{U}(\mathcal{O})$, where $\mathcal{U}$ is an open subscheme of $\mathcal{G}$ with $\mathcal{U}(\mathcal{O})\neq \emptyset$.
\end{claim*}
\begin{claimproof}
Let $\mathcal{U}\subseteq \mathcal{G}$ be an open subscheme of $\mathcal{G}$ with $\mathcal{U}(\mathcal{O})\neq\emptyset$. 
The proof mimics the proof of Proposition \ref{P:grp scheme over O is gen stable}, we show that the generic type found there can be found on $\mathcal{U}(\mathcal{O})$.

Since $p$ is translation invariant and $r:\mathcal{G}(\mathcal{O})\to \mathcal{G}_k$ is surjective by Theorem \ref{T:Surjective}, $r_*p$ is also translation invariant. Since $\mathcal{G}_k$ is an algebraic group, $r_*p$ is its unique generic type.

By Proposition \ref{P:existence of gen stable generic}, there is a generically stable type $q$ concentrated on $\mathcal{U}(\mathcal{O})$ such that $r_*q=r_*p$ (indeed, $\mathcal{G}_k$ is irreducible so $r_*p$ is also the generic type of the open subvariety $\mathcal{U}_k\subseteq \mathcal{G}_k$). 
Continuing as in the proof of Proposition \ref{P:grp scheme over O is gen stable}, $q$ is a generically stable generic type concentrated on $\mathcal{G}(\mathcal{O})$. Since $p$ is the unique generic type, $p=q$ and thus $p$ is concentrated on $\mathcal{U}(\mathcal{O})$. 
\end{claimproof}

We need to show that for each $\mathcal{V}_i$ such that $\mathcal{V}_i(\mathcal{O})\neq\emptyset$, $\mathcal{V}_i$ has the mmp w.r.t. p. Let $\mathcal{V}:=\mathcal{V}_i$ be such an affine open subscheme. By the Claim above, $p$ is concentrated on $\mathcal{V}(\mathcal{O})$.

Consider the multiplication map $m:\mathcal{G}\times \mathcal{G}\to \mathcal{G}$, let $h\in \mathcal{V}(\mathcal{O})$ and $c\models p|\mathbb{K}$. Since $h=hc^{-1}\cdot c$ and $m^{-1}(\mathcal{V})$ is covered by finitely many basic open affine subschemes, there exist $k,j$ (with out loss of generality $k=1,\, j=2$) and a basic open affine subscheme $D_{\mathcal{V}_1\times \mathcal{V}_2}(\alpha)\subseteq \mathcal{V}_1\times \mathcal{V}_2$ such that $m$ restricts to $D_{\mathcal{V}_1\times \mathcal{V}_2}(\alpha)\to \mathcal{V}$ and $(hc^{-1},c)\in D_{\mathcal{V}_1\times \mathcal{V}_2}(\alpha)(\mathcal{O})$, where \[D_{\mathcal{V}_1\times \mathcal{V}_2}(\alpha)(\mathcal{O})=\{(x,y)\in (\mathcal{V}_1\times \mathcal{V}_2)(\mathcal{O}):\val (\alpha(x,y))=0\}.\]

\begin{claim*}
$p\otimes p$ is concentrated on $D_{\mathcal{V}_1\times \mathcal{V}_2}(\alpha)(\mathcal{O})$.
\end{claim*}
\begin{claimproof}
By the previous claim, $p$ is concentrated on each $\mathcal{V}_j(\mathcal{O})$ with $\mathcal{V}_j(\mathcal{O})\neq\emptyset$, and thus $p\otimes p$ is concentrated on $\mathcal{V}_1(\mathcal{O})\times \mathcal{V}_2(\mathcal{O})$. Since $(hc^{-1},c)\models p\times p$ is concentrated on $D_{\mathcal{V}_1\times \mathcal{V}_2}(\alpha)(\mathcal{O})$, and so $\val (\alpha(hc^{-1},c))=0$, the result follows by Fact \ref{F:mmp from stabledomination}. 
\end{claimproof}

Let $f$ be a regular function on $\mathcal{V}_\mathbb{K}$. Hence \[f(h)=f(hc^{-1}\cdot c)=\frac{\sum f_i(hc^{-1})g_i(c)}{\alpha^k(hc^{-1},c)},\] where $\sum f_i(x)g_i(y)$ is a regular function on $(\mathcal{V}_1)_\mathbb{K}\times (\mathcal{V}_2)_\mathbb{K}$. 
Using Fact \ref{F:mmp from stabledomination} and the fact that $p\otimes p$ is concentrated on $D_{\mathcal{V}_1\times \mathcal{V}_2}(\alpha)(\mathcal{O})$,
\[\val (f(h))=\val (f(hc^{-1}\cdot c))=\val \left(\frac{\sum f_i(hc^{-1})g_i(c)}{\alpha^k(hc^{-1},c)}\right)\]
\[=\val \left(\sum f_i(hc^{-1})g_i(c)\right)\geq \val \left(\sum f_i(a)g_i(b)\right)\]
\[=\val \left(\frac{\sum f_i(a)g_i(b)}{\alpha^k(a,b)}\right)=\val (f(a\cdot b)),\]
for $(a,b)\models p\otimes p|\mathbb{K}$. Since $p$ is generic, $a\cdot b\models p|\mathbb{K}$ and thus \[\left(d_px)(\val (f(h))\geq \val (f(x))\right).\]

\end{proof}

In the above proof we used the fact that $\mathcal{G}$ is of finite type over $\mathcal{O}_K$ in order to get that $\dim \mathcal{G}_K =\dim \mathcal{G}_k$. The first direction, though, follows through exactly to prove the following:

\begin{proposition}\label{P:unique-generic-pro-def alg}
Let $\mathcal{G}$ be a quasi-compact separated irreducible group scheme over $\mathcal{O}_K$ and $p$ a $K$-definable type concentrated on $\mathcal{G}(\mathcal{O})$. If $\mathcal{G}$ has the mmp w.r.t. $p$ then $\mathcal{G}(\mathcal{O})$ is generically stable with $p$ as its unique generically stable generic type.
\end{proposition}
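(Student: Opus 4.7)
The plan is to replicate the $(1)\Rightarrow(2)$ direction of Theorem~\ref{T:unique-generic-only-alg-group} verbatim, observing that nothing in that direction used the finite type hypothesis (which entered only through the appeal to Proposition~\ref{P:existence of gen stable generic} in the $(2)\Rightarrow(1)$ part via $\dim\mathcal{G}_K=\dim\mathcal{G}_k$). All the auxiliary tools I need are available in the pro-definable setting: Lemma~\ref{L:implications-of-mmp} and Proposition~\ref{P:mmp iff for affine cover} are stated for quasi-compact separated integral (respectively irreducible) schemes over $\mathcal{O}_K$, and Section~\ref{ss:Pro-def-grps} assures that $\mathcal{G}(\mathcal{O})$ is a pro-definable group while any regular function on an open $\mathcal{U}\subseteq \mathcal{G}_\mathbb{K}$ comes from a regular function on some finite-type approximation $\mathcal{U}_i$, which is where the maximum modulus inequality actually gets applied.

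First I would invoke Lemma~\ref{L:implications-of-mmp} to obtain that $p$ is generically stable and Zariski dense in $\mathcal{G}_K$. Since $\mathcal{G}(\mathcal{O})$ has a $K$-definable generically stable type concentrated on it, to show it is generically stable with $p$ as the unique such generic, by the facts recalled at the start of Section~\ref{s:gen stable grps} it suffices to prove $gp=p$ for every $g\in\mathcal{G}(\mathcal{O})$.

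Next, fix $g\in\mathcal{G}(\mathcal{O})$ and choose an affine open subscheme $\mathcal{V}\subseteq\mathcal{G}$ over $\mathcal{O}_K$ provided by the mmp datum of Definition~\ref{D:mmp}. The left translate $g^{-1}\cdot\mathcal{V}$ is an open subscheme whose set of $\mathcal{O}$-points contains the identity, hence is non-empty. By Proposition~\ref{P:mmp iff for affine cover} applied to $\mathcal{G}_\mathcal{O}$, the type $p$ is then concentrated on $(g^{-1}\cdot\mathcal{V})(\mathcal{O})$, and consequently $gp$ is concentrated on $\mathcal{V}(\mathcal{O})$; moreover $g^{-1}\cdot\mathcal{V}$ itself has the mmp with respect to $p$. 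Now let $f$ be any regular function on $\mathcal{V}_\mathbb{K}$. Because $\mathcal{G}$ is a directed inverse limit of separated schemes of finite type, $f$ factors through some finite-type level, so the mmp inequality literally applies: picking $b\models p|L$ for a small model $L$ over which everything is defined, and writing $\gamma_f\in\Gamma(L)$ for the value such that $(d_px)(\val(f(x))=\gamma_f)$, the mmp for $\mathcal{V}$ gives $\val(f(gb))\ge\gamma_f$.

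Running the same argument symmetrically with the regular function $x\mapsto f(gx)$ on $(g^{-1}\cdot\mathcal{V})_\mathbb{K}$, and using $b=g\cdot(g^{-1}b)$ together with the mmp for $g^{-1}\cdot\mathcal{V}$, we obtain $\val(f(b))\ge \val(f(gb))$, whence $\val(f(gb))=\gamma_f$. Thus $gp$ and $p$ assign the same value to every valuation of a regular function on every affine open of $\mathcal{G}$; by quantifier elimination in $L_{\mathrm{div}}$ (as in Remark~\ref{R:after mmp}) this forces $gp=p$, completing the proof. The only point that needs a moment's care, and where I expect the mild obstacle to lie, is the bookkeeping that a regular function on an open of $\mathcal{G}_\mathbb{K}$ in the pro-definable sense always descends to one on some $(\mathcal{V}_i)_\mathbb{K}$ where the mmp was actually tested — but this is precisely what Fact~\ref{F:directed-limit}(2) guarantees, so no new ingredient is required.
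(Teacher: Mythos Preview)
Your proposal is correct and matches the paper's own approach exactly: the paper's proof of this proposition is literally the remark that ``the first direction [of Theorem~\ref{T:unique-generic-only-alg-group}] follows through exactly,'' since the finite-type hypothesis was only used in $(2)\Rightarrow(1)$ via $\dim\mathcal{G}_K=\dim\mathcal{G}_k$. One small slip: you say $(g^{-1}\cdot\mathcal{V})(\mathcal{O})$ contains the identity, but that would require $g\in\mathcal{V}(\mathcal{O})$; what you actually need (and have) is merely that it is non-empty, since $\mathcal{V}(\mathcal{O})$ is non-empty and left translation by $g^{-1}$ is a bijection on $\mathcal{O}$-points.
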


\begin{remark}
Hrushovski and Rideau-Kikuchi, in \cite[Theorem 6.11]{Metastable}, use the functor $\Phi^{\aff}$ to associate with every affine algebraic group  and a generically stable generic of a type-definable subgroup and an affine group scheme over $\mathcal{O}_K$. Since we do not yet know if $\Phi^{\aff}$ can be extended to a functor on general varieties we could not follow a similar path. We aim to continue to follow this path in a subsequent paper.
\end{remark}

\subsection{Universally Closed}
Although not directly connected to the rest of the paper (though there could be some future connection to algebraic groups with a unique generically stable generic type), we think this result might be interesting in its own right.
%
%
%
%

We recall the following formulation for the valuation criterion for universally closed morphisms and deduce consequences.

\begin{fact}\label{F:val_crit}\cite[Tag 0894]{stacks-project}
Let $f : X \to S$ and $h : U \to X$ be morphisms of schemes.
Assume that $f$ and $h$ are quasi-compact and that $h(U)$ is dense in $X$.
If given any commutative diagram
\[
\xymatrix {
\Spec L \ar[r] \ar[d] & U \ar[r] & X \ar[d] \\
\Spec R \ar[rr] \ar@{-->}[rru] & & S
}
\]
where $R$ is a valuation ring with field of fractions $L$, there
exists a unique dotted arrow making the diagram commute, then $f$
is universally closed.
\end{fact}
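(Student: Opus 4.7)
The plan is to reduce to the standard existential valuative criterion: a quasi-compact morphism $f:X\to S$ is universally closed iff every commutative square $\Spec K\to X$, $\Spec A\to S$ (with $A$ a valuation ring, $K$ its fraction field) admits a lift $\Spec A\to X$. The hypothesis already supplies such a (unique) lift when $\Spec K\to X$ factors through $h:U\to X$, so the content is to bootstrap this restricted lifting property to arbitrary diagrams, exploiting the density of $h(U)$ in $X$.

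My approach would be to reduce an arbitrary diagram to one factoring through $U$ at the cost of enlarging the valuation ring. Given $\Spec K\to X$ with image $\xi\in X$, density of $h(U)$ means every non-empty open of $X$ meets $h(U)$, so one can pick $u\in U$ whose image $h(u)$ sits in a chosen neighborhood of $\xi$. Combining this with standard existence results for valuation rings dominating prescribed local rings (e.g.\ \cite[Theorem 10.2]{matsumura}) one constructs an extension $A\subseteq A'$ of valuation rings, with fraction fields $K\subseteq K'$, together with a morphism $\Spec K'\to U$ whose composite $\Spec K'\to U\to X$ fits into an extended commutative square over $S$ pulling back the original diagram. Applying the hypothesis to this enriched square yields a unique lift $\Spec A'\to X$.

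The main obstacle is then to descend this $A'$-lift to a lift already defined over $A$. Here the uniqueness of the dotted arrow in the hypothesis is essential: for any two extensions $A'_1,A'_2\supseteq A$ produced by different choices, the resulting lifts agree on any common further dominating valuation ring, so a limit argument over the directed system of all admissible extensions (using the quasi-compactness of $f$ to ensure the germ of the lift is determined by a bounded amount of data) yields a single lift $\Spec A\to X$ of the original diagram. Having verified the existential valuative criterion for every valuation-ring diagram over $S$, the standard criterion concludes that $f$ is universally closed.
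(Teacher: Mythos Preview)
The paper does not prove this statement --- it is recorded as a Fact with a citation to the Stacks Project --- so there is no in-paper argument to compare against; I comment only on your sketch. The overall shape is right, but two steps do not go through as written. First, choosing $u$ with $h(u)$ merely ``in a neighborhood of $\xi$'' is not enough: you need $x':=h(u)$ to \emph{generalize} $\xi$, and density alone does not supply generalizations (consider the set of closed points of $\mathbb{A}^2_k$, which is dense but contains no generalization of the generic point). This is precisely where quasi-compactness of $h$ is used --- for a quasi-compact morphism every point in the closure of the image is a specialization of a point of the image --- and you never invoke that hypothesis.

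Second, and more seriously, the descent step is not valid. After enlarging, $\Spec K'\to X$ lands at $x'$, not at $\xi$; if $\Spec A'\to S$ is taken to factor through $\Spec A$ the square typically fails to commute (since $f(x')\neq f(\xi)$ in general), so the lift $\Spec A'\to X$ you obtain is not a lift of an extension of the original diagram, and no ``limit over extensions $A'\supseteq A$'' can manufacture a map out of $\Spec A$. The genuine role of the uniqueness hypothesis is different: one builds $A'$ as a \emph{composite} valuation ring carrying a distinguished intermediate prime $\mathfrak q$, arranged so that $\Spec(A')_{\mathfrak q}$ already has an evident map to $X$ sending its closed point to $\xi$; uniqueness then forces the global lift $\Spec A'\to X$ to send $\mathfrak q$ to $\xi$. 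Restricting to $\Spec(A'/\mathfrak q)$ yields a lift with generic point hitting $\xi$ over a valuation ring dominating $A$, which is exactly the ``existence after a field extension'' form of the valuative criterion for universal closedness.
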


\begin{corollary}\label{C:val_crit}
Let $F$ be a valued field and $\mathcal{V}$ a quasi-compact separated irreducible scheme over $\mathcal{O}_F$ with an $\mathcal{O}_F$-point. Then $\mathcal{V}$ is universally closed over $\mathcal{O}_F$ if and only if  $\mathcal{V}_R(R)=\mathcal{V}_L(L)$ for every field extension $F\subseteq L$ and $\mathcal{O}_F\subseteq R$ a valuation ring on $L$.
\end{corollary}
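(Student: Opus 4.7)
The plan is to invoke the valuative criterion Fact \ref{F:val_crit} with $S=\Spec\mathcal{O}_F$, $X=\mathcal{V}$, $f$ the structure morphism, and, crucially, $U=\mathcal{V}_F$ together with the canonical open immersion $h\colon U\hookrightarrow X$. The maps $f$ and $h$ are quasi-compact by hypothesis, and density of $h(U)=\mathcal{V}_F$ in $\mathcal{V}$ follows from the $\mathcal{O}_F$-point hypothesis: by the remark following Proposition \ref{P:V irr then general}, $\mathcal{V}\to\Spec\mathcal{O}_F$ is dominant, so the unique generic point of the irreducible scheme $\mathcal{V}$ lies over the generic point of $\Spec\mathcal{O}_F$ and hence belongs to $\mathcal{V}_F$. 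This is the only role played by the $\mathcal{O}_F$-point assumption.

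For the direction $(\Leftarrow)$, given any commutative diagram of the shape required by Fact \ref{F:val_crit}, the composition $\Spec L\to\Spec R\to\Spec\mathcal{O}_F$ makes $L$ a field extension of $F$, and the arrow $\Spec L\to U=\mathcal{V}_F$ is then an element of $\mathcal{V}_L(L)$. By assumption, the natural inclusion $\mathcal{V}_R(R)\hookrightarrow\mathcal{V}_L(L)$ coming from separatedness is a bijection, so this $L$-point extends to an element of $\mathcal{V}_R(R)$. Via the universal property of the base change $\mathcal{V}_R=\mathcal{V}\times_{\Spec\mathcal{O}_F}\Spec R$, such an $R$-point is the same as a morphism $\Spec R\to\mathcal{V}$ over $\Spec\mathcal{O}_F$, giving the desired dotted arrow. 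Uniqueness of the lift is automatic from separatedness of $\mathcal{V}$ over $\mathcal{O}_F$ via the valuative criterion for separatedness, and Fact \ref{F:val_crit} then delivers universal closedness.

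For the direction $(\Rightarrow)$, assume $\mathcal{V}$ is universally closed over $\mathcal{O}_F$. Injectivity of the natural map $\mathcal{V}_R(R)\to\mathcal{V}_L(L)$ is again the valuative criterion for separatedness. For surjectivity, a given element of $\mathcal{V}_L(L)$ corresponds to a morphism $\Spec L\to\mathcal{V}$ fitting, together with the structure map $\Spec R\to\Spec\mathcal{O}_F$, into a commutative square over $\Spec\mathcal{O}_F$; the classical valuative criterion for universally closed quasi-compact morphisms then supplies a lift $\Spec R\to\mathcal{V}$, which by the universal property of base change corresponds to an element of $\mathcal{V}_R(R)$ mapping to the original $L$-point. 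The only real subtlety, and the whole reason the corollary is non-vacuous, lies in the $(\Leftarrow)$ direction: one must identify the correct dense open $U=\mathcal{V}_F$ so that Fact \ref{F:val_crit} only requires lifting from field extensions of $F$, matching exactly the hypothesis stated in the corollary.
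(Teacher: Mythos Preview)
Your proof is correct and follows essentially the same strategy as the paper: both invoke Fact~\ref{F:val_crit} with a dense $U$ chosen so that the valuative diagrams force $\mathcal{O}_F\subseteq R$ and $F\subseteq L$, reducing to the hypothesis of the corollary. The only difference is cosmetic: you take $U=\mathcal{V}_F$, while the paper takes $U$ to be the generic point of $\mathcal{V}$ (using irreducibility), then argues directly that the generic point of $\Spec R$ must hit the generic point of $\Spec\mathcal{O}_F$; your choice of $U$ makes the factorization through $\Spec F$ automatic and the argument slightly cleaner, but the content is identical.
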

\begin{proof}
Assume that $\mathcal{V}$ is universally closed over $\mathcal{O}_F$. Since universally closed is stable under base change \cite[Remark 14.50]{gortz}, $\mathcal{V}_R$ is universally closed over $R$. By the valuative criterion for universally closed morphisms, for every $L$-point, $\Spec L\to \mathcal{V}_R$, there exists a unique dotted arrow making the following commute
\[
\xymatrix {
\Spec L \ar[r] \ar[d] &\mathcal{V}_R \ar[d] \\
\Spec R \ar[r] \ar@{-->}[ru] &  \Spec R
}
\]
and thus  $\mathcal{V}_R(R)=\mathcal{V}_L(L)$.

For the other direction, since $\mathcal{V}$ is irreducible, by \Cref{F:val_crit} it is enough to consider commutative diagrams of the sort
\[\xymatrix
{
\Spec L\ar[r]\ar[d]
& \mathcal{V}\ar[d]\\
\Spec R\ar[r]
& \Spec \mathcal{O}_F
}\]
where $\Spec L$ gets sent to the generic point of $\mathcal{V}$ and $L$ is the fraction field of $R$. The diagram commutes so there is a point in $\Spec R$ which gets sent to the generic point of $\Spec \mathcal{O}_F$, but this implies that the generic point of $\Spec R$ also gets sent to the generic point of $\Spec \mathcal{O}_F$.  As $\Spec \mathcal{O}_F$ is reduced, $\mathcal{O}_F\subseteq R$. So the result follows from Fact \ref{F:val_crit} and the assumption. 
\end{proof}

\begin{lemma}\label{L:uni-sep-iff-all-limits}
Let $F$ be a valued field and $\mathcal{V}=\varprojlim_i \mathcal{V}_i$ be a quasi-compact separated irreducible scheme over $\mathcal{O}_F$ with an $\mathcal{O}_F$-point, and assume that the $\mathcal{V}_i$ are separated over $\mathcal{O}_F$. If $\mathcal{V}_F$ is of finite type over $F$ then $\mathcal{V}$ is universally closed over $\mathcal{O}_F$ if and only if there exists $i_0$ such that for all $i\geq i_0$ $\mathcal{V}_i$ is universally closed over $\mathcal{O}_F$.
\end{lemma}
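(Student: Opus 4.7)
The plan is to prove both directions via the valuative criterion of Corollary~\ref{C:val_crit}, exploiting the identity $\mathcal{V}(A) = \varprojlim_i \mathcal{V}_i(A)$ for every $\mathcal{O}_F$-algebra $A$, which follows from the universal property of the inverse limit.

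For the direction $(\Leftarrow)$, suppose $\mathcal{V}_i$ is universally closed over $\mathcal{O}_F$ for all $i \geq i_0$. Given a field extension $F \subseteq L$, a valuation ring $\mathcal{O}_F \subseteq R \subseteq L$, and a compatible family $\sigma = (\sigma_i)_i \in \mathcal{V}(L)$, for each $i \geq i_0$ the hypothesis provides an $R$-point $\tilde\sigma_i : \Spec R \to \mathcal{V}_i$ lifting $\sigma_i$; this lift is unique by separatedness of $\mathcal{V}_i$, and the uniqueness forces compatibility under all transition maps for $i_0 \leq i \leq j$ (both $\pi_{ij}\circ\tilde\sigma_j$ and $\tilde\sigma_i$ lift $\sigma_i$). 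For $i<i_0$ one sets $\tilde\sigma_i := \pi_{ii_0}\circ\tilde\sigma_{i_0}$. The resulting compatible system is an element of $\mathcal{V}(R)$ lifting $\sigma$, whence Corollary~\ref{C:val_crit} gives that $\mathcal{V}$ is universally closed.

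For $(\Rightarrow)$ the crux is to show that the generic-fibre projection $(\pi_j)_F : \mathcal{V}_F \to (\mathcal{V}_j)_F$ is an isomorphism for all sufficiently large $j$. Granting this, an $L$-point of $\mathcal{V}_j$ factors through $(\mathcal{V}_j)_F \cong \mathcal{V}_F$, yielding an $L$-point of $\mathcal{V}$; universal closedness of $\mathcal{V}$ extends it to an $R$-point of $\mathcal{V}$, whose projection under $\pi_j$ is the desired $R$-lift, and Corollary~\ref{C:val_crit} concludes. To establish the isomorphism, I will combine three observations. First, applying \cite[Tag 01W4]{stacks-project} to $\mathcal{V} \to \mathcal{V}_j \to \Spec\mathcal{O}_F$ shows that $\pi_j$ is universally closed (using $\mathcal{V}$ universally closed over $\mathcal{O}_F$ and $\mathcal{V}_j$ separated); since $\pi_j$ is also affine as a limit of affine transitions, it is integral by \cite[Tag 01WG]{stacks-project}, so $(\pi_j)_F$ is integral, and being between finite-type $F$-schemes, is finite. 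Second, the finite-type hypothesis on $\mathcal{V}_F$ (equivalently, finite presentation, as $F$ is a field) combined with $\mathcal{V}_F = \varprojlim_i (\mathcal{V}_i)_F$ yields, via the limit--colimit formula $\mathrm{Hom}_F(\varprojlim_i (\mathcal{V}_i)_F, \mathcal{V}_F) = \varinjlim_i \mathrm{Hom}_F((\mathcal{V}_i)_F, \mathcal{V}_F)$ applied to $\mathrm{id}_{\mathcal{V}_F}$, a retraction $\phi : (\mathcal{V}_{j_0})_F \to \mathcal{V}_F$ with $\phi\circ (\pi_{j_0})_F = \mathrm{id}$, from which one gets retractions $\phi_j := \phi\circ(\pi_{j_0 j})_F$ for every $j \geq j_0$; hence $(\pi_j)_F$ is a section of the separated morphism $\phi_j$, so a closed immersion. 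Third, the natural transitions of the inverse system produced by Fact~\ref{F:directed-limit} are dominant (on affine pieces they are inclusions of finitely-generated $\mathcal{O}_F$-subalgebras), so $\pi_j$ is dominant and, being closed, surjective; $(\pi_j)_F$ is then a surjective closed immersion into the reduced integral scheme $(\mathcal{V}_j)_F$, hence an isomorphism.

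The main obstacle is the surjectivity of $(\pi_j)_F$ at the last step: without dominance of the inverse system, the finite closed immersion $(\pi_j)_F$ could map $\mathcal{V}_F$ onto a proper closed subscheme of $(\mathcal{V}_j)_F$ and the conclusion would fail. For the inverse systems used in the paper the required dominance is automatic, and the finite-type hypothesis on $\mathcal{V}_F$ is essential both for invoking the colimit description of Hom in step two and for reducing $(\pi_j)_F$ from integral to finite in step one.
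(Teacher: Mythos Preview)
Your approach is the same as the paper's: both directions go through Corollary~\ref{C:val_crit}, and the heart of $(\Rightarrow)$ is that the generic-fibre projections $(\pi_j)_F:\mathcal{V}_F\to(\mathcal{V}_j)_F$ are eventually isomorphisms. The paper simply \emph{asserts} this in one line (``Since $\mathcal{V}_F$ is of finite type over $F$ we may assume that all the transition maps in $\mathcal{V}_F=\varprojlim_i(\mathcal{V}_i)_F$ are isomorphisms'') and then runs the obvious diagram chase on
\[
\xymatrix{
\mathcal{V}_R(R)\ar[d]^{\pi_i}\ar[r]^{\iota} & \mathcal{V}_L(L)\ar[d]^{(\pi_i)_L}\\
(\mathcal{V}_i)_R(R)\ar[r]^{\iota_i} & (\mathcal{V}_i)_L(L)
}
\]
so your write-up is considerably more explicit than the paper's, and you are right to flag that the eventual-isomorphism step tacitly uses the specific presentation coming from Fact~\ref{F:directed-limit} (ring inclusions on affine charts), not an arbitrary inverse system.

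One small point: your final step concludes that a surjective closed immersion into $(\mathcal{V}_j)_F$ is an isomorphism, which needs $(\mathcal{V}_j)_F$ reduced; this is not part of the lemma's hypotheses. In the paper's intended setting the $\mathcal{V}_i$ arise as spectra of $\mathcal{O}_F$-subalgebras of the (integral) coordinate rings of $\mathcal{V}$, so they are automatically integral and the issue disappears. More directly, once you know the transition maps are inclusions of finitely generated $F$-algebras on each affine chart and the colimit is itself finitely generated, the inclusions literally stabilise, which gives the isomorphism without the closed-immersion detour.
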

\begin{proof}
Since $\mathcal{V}_F$ is of finite type over $F$ we may assume that all the transition maps in $\mathcal{V}_F=\varprojlim_i (\mathcal{V}_i)_F$ are isomorphisms. We use Corollary \ref{C:val_crit}. Let $F\subseteq L$ be an extension of fields and $\mathcal{O}_F\subseteq R$ a valuation ring on $L$, and consider the following commutative diagram
\[\xymatrix
{
\mathcal{V}_R(R)\ar[d]^{\pi_i} \ar[r]^\iota & \mathcal{V}_L(L)\ar[d]^{(\pi_i)_L}\\
(\mathcal{V}_R)_i(R)\ar[r]^{\iota_i}& (\mathcal{V}_i)_L(L)
}\]
Since $(\pi_i)_L$ is a bijection, if $\iota$ is a bijection so is $\iota_i$. If $\iota_i$ is a bijection for all $i$ then $\mathcal{V}_i(\mathcal{O}_L)\to \mathcal{V}_j(\mathcal{O}_L)$ is a bijection for all $i\geq j$ so $\pi_i$ is also a bijection. Hence $\iota$ is a bijection.
\end{proof}

\begin{proposition}\label{P:uni-closed}
Let $F$ be a non-trivially valued field and $\mathcal{V}$ a quasi-compact separated irreducible scheme over $\mathcal{O}_F$ with an $\mathcal{O}_F$-point such that $\mathcal{V}_F$ is geometrically irreducible. If $\mathcal{V}_F$ is proper over $F$ and $\mathcal{V}(\mathcal{O})= \mathcal{V}_F$ then $\mathcal{V}$ is universally closed over $\mathcal{O}_F$.
\end{proposition}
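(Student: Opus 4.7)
The plan is to verify the characterization of universal closedness from Corollary \ref{C:val_crit}: it suffices to show $\mathcal{V}_R(R)=\mathcal{V}_L(L)$ for every field extension $F\subseteq L$ and every valuation ring $R$ of $L$ with $\mathcal{O}_F\subseteq R$. The inclusion $\mathcal{V}_R(R)\subseteq\mathcal{V}_L(L)$ is automatic, and uniqueness of any lift of an $L$-point to an $R$-point is immediate from the valuative criterion of separatedness. Given an $L$-point $x\colon\Spec L\to\mathcal{V}$ over $\mathcal{O}_F$, the argument splits on whether $F\subseteq R$.

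If $F\subseteq R$, then the structural morphism $\Spec R\to\Spec\mathcal{O}_F$ factors through $\Spec F$, so producing an $R$-point of $\mathcal{V}$ over $\mathcal{O}_F$ extending $x$ amounts to producing an $R$-point of $\mathcal{V}_F$ over $F$ extending the induced $L$-point. This is supplied by the valuative criterion of properness applied to the proper morphism $\mathcal{V}_F\to\Spec F$.

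Otherwise $F\not\subseteq R$, and $R_0:=R\cap F$ is a valuation ring of $F$ with $\mathcal{O}_F\subseteq R_0\subsetneq F$. The strategy is to invoke the hypothesis $\mathcal{V}(\mathcal{O})=\mathcal{V}_F$ after coarsening the monster valuation so that its restriction to $F$ matches $R_0$. Concretely, choose a coarsening $\mathcal{O}^c\supseteq\mathcal{O}_\mathbb{K}$ of the monster valuation ring associated with a prime of $\mathcal{O}_\mathbb{K}$ lying over the prime of $\mathcal{O}_F$ that defines $R_0$; such a prime exists because $\mathcal{O}_\mathbb{K}$ is faithfully flat over $\mathcal{O}_F$, making $\Spec\mathcal{O}_\mathbb{K}\to\Spec\mathcal{O}_F$ surjective. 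Then $\mathcal{O}^c\cap F=R_0$, and when $R_0=\mathcal{O}_F$ this degenerates to $\mathcal{O}^c=\mathcal{O}_\mathbb{K}$. By saturation extend $(L,R)$ to an algebraically closed valued field $(\bar L,\bar R)$ and embed $(\bar L,\bar R)\hookrightarrow(\mathbb{K},\mathcal{O}^c)$ as valued fields over $(F,R_0)$. The resulting $\mathbb{K}$-point of $\mathcal{V}_F$ lies in $\mathcal{V}(\mathcal{O}_\mathbb{K})\subseteq\mathcal{V}(\mathcal{O}^c)$ by the hypothesis $\mathcal{V}(\mathcal{O})=\mathcal{V}_F$, so on every affine open $\Spec A\subseteq\mathcal{V}$ whose image meets the image of $x$ the morphism $A\to\mathbb{K}$ factors through $\mathcal{O}^c$. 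Since it also factors as $A\to L\hookrightarrow\mathbb{K}$, the image of $A\to L$ lies in $L\cap\mathcal{O}^c=R$, furnishing the $R$-point.

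The main obstacle is the second case: setting up the coarsening $\mathcal{O}^c$ with $\mathcal{O}^c\cap F=R_0$, and producing the valued embedding $(\bar L,\bar R)\hookrightarrow(\mathbb{K},\mathcal{O}^c)$ over $(F,R_0)$. Once these valuation-theoretic arrangements are made, the hypothesis $\mathcal{V}(\mathcal{O})=\mathcal{V}_F$ delivers integrality of the coordinates on every relevant affine patch, and the descent from coordinates in $\mathcal{O}^c$ to coordinates in $R$ is immediate from the valued embedding.
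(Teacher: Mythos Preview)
Your overall plan is sound and the case $F\subseteq R$ is handled exactly as in the paper. The second case, however, has a real gap at the embedding step, and you are also missing a preliminary reduction that the paper makes.

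\textbf{The main gap.} You write ``embed $(\bar L,\bar R)\hookrightarrow(\mathbb{K},\mathcal{O}^c)$ as valued fields over $(F,R_0)$'' by saturation. But $(\mathbb{K},\mathcal{O}^c)$ is \emph{not} the monster model: it is a coarsening, and the convex subgroup $\Delta\subseteq\Gamma_\mathbb{K}$ cutting it out is in general not definable. Coarsenings of saturated algebraically closed valued fields need not be saturated (for instance $\Gamma_\mathbb{K}/\Delta$ need not be a saturated DOAG), so there is no reason the desired embedding exists. One repair is to reverse the order of operations: first refine $\bar R$ to a valuation ring $\bar R'\subseteq\bar R$ of $\bar L$ with $\bar R'\cap F=\mathcal{O}_F$ (compose the place of $\bar R$ with an extension to $k_{\bar R}$ of the place $k_{R_0}\to k_{\mathcal{O}_F}$), then embed $(\bar L,\bar R')\hookrightarrow(\mathbb{K},\mathcal{O}_\mathbb{K})$ using saturation of the actual monster, and finally take $\mathcal{O}^c$ to be the coarsening of $\mathcal{O}_\mathbb{K}$ given by the convex hull in $\Gamma_\mathbb{K}$ of the convex subgroup corresponding to $\bar R/\bar R'$; then $\mathcal{O}^c\cap\bar L=\bar R$ and the rest of your argument goes through.

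\textbf{The missing reduction.} You also omit the reduction to $\mathcal{V}$ of finite type over $\mathcal{O}_F$ (Lemma~\ref{L:uni-sep-iff-all-limits}), which the paper does first. Corollary~\ref{C:val_crit} quantifies over \emph{all} valued fields $(L,R)$, including those of cardinality exceeding the monster, so even the repaired embedding above needs some care (one can enlarge the monster, since the hypothesis $\mathcal{V}(\mathcal{O})=\mathcal{V}_F$ is, for each finite-type approximation $\mathcal{V}_i$, a first-order sentence over $F$ and hence passes to elementary extensions; but this should be said).

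\textbf{Comparison with the paper.} The paper avoids coarsenings of the monster entirely. After reducing to finite type, it base-changes by faithfully flat descent to assume $F$ is algebraically closed, so that $(F,\mathcal{O}_F)\models\mathrm{ACVF}$ and the hypothesis reads $\mathcal{V}(\mathcal{O}_F)=\mathcal{V}_F(F)$ as a first-order sentence. For $R_0=R\cap F$, one gets $\mathcal{V}(R_0)=\mathcal{V}_F(F)$ for free since $\mathcal{O}_F\subseteq R_0$, and then $(F,R_0)\preceq (L,R)$ by model completeness (after taking $L$ algebraically closed), transferring the sentence directly. This is shorter and sidesteps both issues above; your route, once repaired, essentially reproves this transfer by hand inside the monster.
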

\begin{proof}
By Lemma \ref{L:uni-sep-iff-all-limits} we may assume that $\mathcal{V}$ is of finite type over $\mathcal{O}_F$. Since the assumptions are stable under base-change (see \cite[Appendix C]{gortz}), by faithfully flat descent (see \cite[Remark 14.50]{gortz}), we may assume that $F$ is algebraically closed (i.e. since $F$ non-trivially valued, a model of ACVF). Consequently, by the assumptions $\mathcal{V}(\mathcal{O}_F)=\mathcal{V}_F(F)$.

Let $F\subseteq L$ be a field extension and $R$ a valuation ring of $L$ containing $\mathcal{O}_F$, as in Corollary \ref{C:val_crit}. Note that we may assume that $L$ is algebraically closed, and hence, since $F$ is not trivially valued, a model of ACVF. Indeed, let $L^{alg}$ be the algebraic closure of $L$ and $R'$ an extension of $R$ to $L^{alg}$. Since the sentence $\mathcal{V}_{R'}(R')=\mathcal{V}_{L^{alg}}(L^{alg})$ is a universal sentence, it holds in $L$ as well so  so $\mathcal{V}(R)=\mathcal{V}_L(L)$.

If $R\cap F=\mathcal{O}_F$ then $(F,\mathcal{O}_F)$ is a substructure of $(L,R)$ and hence by model completeness $\mathcal{V}_R(R)=\mathcal{V}_L(L)$.

If $\mathcal{O}_F\subsetneq R\cap F\subsetneq F$ then since $\mathcal{V}(\mathcal{O}_F)\to \mathcal{V}_F(F)$ factors through $\mathcal{V}_{R\cap F}(R\cap F)\to \mathcal{V}_F(F)$, the latter must be surjective as well (note that $R\cap F$ is valuation ring of $F$). As before we are now in the situation that $(F,R\cap F)$ is a substructure of $(L,R)$ hence by model completeness $\mathcal{V}_R(R)=\mathcal{V}_L(L)$.

If $R\cap F=F$ (i.e. $F\subseteq R$) then consider the following commutative diagram:

\[\xymatrix
{
\Spec L \ar@/^1pc/[rr]\ar[d] & \Spec \mathcal{V}_{F}\ar[r]\ar[d]& \Spec \mathcal{V}\ar[d]\\
\Spec R \ar[r] & \Spec F \ar[r] & \Spec\mathcal{O}_F
}
\]
By the universal property of fiber-product the above completes to
\[\xymatrix
{
\Spec L \ar[r]\ar[d] & \Spec \mathcal{V}_{F}\ar[r]\ar[d]& \Spec \mathcal{V}\ar[d]\\
\Spec R \ar[r]\ar@{-->}[ur]\ar@{-->}[urr] & \Spec F \ar[r] & \Spec\mathcal{O}_F
}
\]

In the above, the less horizontal dotted arrow exists since $\mathcal{V}_F$ is proper, and hence universally closed over $\Spec F$. The more horizontal arrow now exists easily. Hence $\mathcal{V}_R(R)=\mathcal{V}_L(L)$.

Finally, $\mathcal{V}$ is universally closed over $\mathcal{O}_F$ by Corollary \ref{C:val_crit}.
\end{proof}

\paragraph*{Acknowledgement}
I would like to thank my PhD advisor Ehud Hrushovski for his careful reading of previous drafts, his many ideas, our discussions and his support. Also for helping me define the correct notions and definitions in this paper. I would also like to thank Yuval Dor for reading some previous drafts, our discussions and many useful comments. Lastly, I would like to thank the anonymous referee whose comments were extremely helpful in improving this paper.

\bibliographystyle{plain}
\bibliography{metastable-groups}

\end{document}